\newlist{sk}{enumerate}{1}
\setlist[sk]{label=$\mathbf{(S)}_{k,H}$:, ref=$\mathbf{(S)}_{k,H}$, wide, labelwidth=!, labelindent=0pt}
\newlist{cond_H}{enumerate}{1}
\setlist[cond_H]{label=$\mathbf{(H)}$:, ref=$\mathbf{(H)}$, wide, labelwidth=!, labelindent=0pt}
\newlist{psk_loc}{enumerate}{1}
\setlist[psk_loc]{label=$\mathbf{(PS_{\mathrm{loc}})}_{k,H}$:, ref=$\mathbf{(PS_{\mathrm{loc}})}_{k,H}$, wide, labelwidth=!, labelindent=0pt}
\newlist{psk}{enumerate}{1}
\setlist[psk]{label=$\mathbf{(PS)}_{k,H}$:, ref=$\mathbf{(PS)}_{k,H}$, wide, labelwidth=!, labelindent=0pt}
\newtheorem{theorem}{Theorem}[section]
\newtheorem{lemma}[theorem]{Lemma}
\newtheorem{proposition}[theorem]{Proposition}
\newtheorem{corollary}[theorem]{Corollary}
\theoremstyle{definition}
\newtheorem{definition}[theorem]{Definition}
\theoremstyle{remark}
\newtheorem{remark}[theorem]{Remark}
\numberwithin{equation}{section}
\crefname{example}{Example}{Examples}
\Crefname{example}{Example}{Examples}
\crefname{assumption}{Assumption}{Assumptions}
\Crefname{assumption}{Assumption}{Assumptions}
\crefname{condition}{Condition}{Conditions}
\Crefname{condition}{Condition}{Conditions}
\let\para\S
\setlist{topsep=1ex, itemsep=0.5ex, before={\setlist{topsep=-.5ex}}}
\DeclareMathOperator{\id}{id}
\newcommand{\f}{\ensuremath{\frac}}
\newcommand{\ft}{\ensuremath{\mathfrak{t}}}
\newcommand{\cB}{\ensuremath{\mathscr{B}}}
\newcommand{\B}{\ensuremath{\mathcal{B}}}
\newcommand{\cC}{\ensuremath{\mathscr{C}}}
\newcommand{\C}{\ensuremath{\mathcal{C}}}
\newcommand{\Cloc}{\ensuremath{\mathcal{C}_{\mathrm{loc}}}}
\newcommand{\E}{\ensuremath{\mathcal{E}}}
\newcommand{\cN}{\ensuremath{\mathcal{N}}}
\newcommand{\fK}{\ensuremath{\mathfrak{K}}}
\newcommand{\fL}{\ensuremath{\mathfrak{L}}}
\renewcommand{\S}{\ensuremath{\mathcal{S}}}
\newcommand{\Q}{\ensuremath{\mathbb{Q}}}
\newcommand{\F}{\ensuremath{\mathcal{F}}}
\newcommand{\x}{\ensuremath{\mathrm{x}}}
\newcommand{\cI}{\ensuremath{\mathscr{I}}}
\newcommand{\I}{\ensuremath{\mathcal{I}}}
\newcommand{\balpha}{\ensuremath{\boldsymbol{\alpha}}}
\newcommand{\bbeta}{\ensuremath{\boldsymbol{\beta}}}
\newcommand{\bigcdot}{\boldsymbol{\cdot}}
\renewcommand{\L}{\ensuremath{\mathcal{L}}}
\newcommand{\N}{\ensuremath{\mathbb{N}}}
\newcommand{\cP}{\ensuremath{\mathscr{P}}}
\renewcommand{\P}{\ensuremath{\mathcal{P}}}
\newcommand{\R}{\ensuremath{\mathbb{R}}}
\newcommand{\X}{\ensuremath{\mathcal{X}}}
\newcommand{\1}{\ensuremath{\mathds{1}}}
\newcommand{\T}{\ensuremath{\mathcal{T}}}
\newcommand{\Y}{\ensuremath{\mathcal{Y}}}
\newcommand{\sW}{\ensuremath{\mathsf{W}}}
\def\<{\langle}
\def\>{\rangle}
\NewDocumentCommand{\Lin}{om}{\IfNoValueTF{#1}{L(\R^{#2},\R^{#2})}{L(\R^{#1},\R^{#2})}}
\NewDocumentCommand{\Cb}{om}{\IfNoValueTF{#1}{\C_b^{#2}}{\C_b^{#2,#1}}}
\def\le{\leq}
\NewDocumentCommand{\Lip}{om}{\IfNoValueTF{#1}{|#2|_{\mathrm{Lip}}}{|#2|_{\mathrm{Lip};\,#1}}}
\newcommand{\define}{\ensuremath\triangleq}
\newcommand{\expec}[1]{\mathbb{E}[#1]}
\newcommand{\Expec}[1]{\mathbb{E}\left[#1\right]}
\newcommand{\braket}[1]{\ensuremath\langle#1\rangle}
\newcommand{\Braket}[1]{\ensuremath\left\langle#1\right\rangle}
\newcommand{\prob}{\ensuremath\mathbb{P}}
\newcommand{\tv}[1]{\ensuremath{\|#1\|_{\mathrm{TV}}}}
\newcommand{\TV}[1]{\ensuremath{\left\|#1\right\|_\mathrm{TV}}}
\renewcommand{\geq}{\geqslant}
\renewcommand{\leq}{\leqslant}
\def\Cb{{\mathrm {BC}}}
\def\${|\!|\!|}
\def\B{{\mathcal B}}
\def\F{{\mathcal F}}
\def\ge{\geq}
\newcommand{\vertiii}[1]{{\left\vert\kern-0.25ex\left\vert\kern-0.25ex\left\vert #1 \right\vert\kern-0.25ex\right\vert\kern-0.25ex\right\vert}}
\newcommand{\rom}[1]{(\textup{\uppercase\expandafter{\romannumeral#1}})}
\newcommand{\substackal}[1]{%
  \vcenter{%
    \Let@ \restore@math@cr \default@tag
    \baselineskip\fontdimen10 \scriptfont\tw@
    \advance\baselineskip\fontdimen12 \scriptfont\tw@
    \lineskip\thr@@\fontdimen8 \scriptfont\thr@@
    \lineskiplimit\lineskip
    \ialign{\hfil$\m@th\scriptstyle##$&$\m@th\scriptstyle{}##$\hfil\crcr
      #1\crcr
    }%
  }%
}
\newcommand\blfootnote[1]{%
  \begingroup
  \renewcommand\thefootnote{}\footnote{#1}%
  \addtocounter{footnote}{-1}%
  \endgroup
}
\begin{document}
\title{On the (Non-)Stationary Density of Fractional-Driven Stochastic Differential Equations}
\author[1,3]{Xue-Mei Li}
\author[2]{Fabien Panloup}
\author[3]{Julian Sieber}
\affil[1]{\'Ecole Polytechnique F\'ed\'erale de Lausanne, Switzerland}
\affil[2]{Univ Angers, CNRS, LAREMA, SFR MATHSTIC, F-49000 Angers, France}
\affil[3]{Department of Mathematics, Imperial College London, 180 Queen's Gate, London SW7 2RH, United Kingdom}
\date{\today}

\newgeometry{top=0cm, bottom=1.5cm}

\maketitle
\thispagestyle{empty}

\blfootnote{Email addresses: \href{mailto:fabien.panloup@univ-angers.fr}{fabien.panloup}@univ-angers.fr, \{\href{mailto:xue-mei.li@imperial.ac.uk}{xue-mei.li}, \href{mailto:j.sieber19@imperial.ac.uk}{j.sieber19}\}@imperial.ac.uk. \\[1ex] \scriptsize
  XML has been supported by the EPSRC grants EP/V026100/1 and EP/S023925/1. JS has been supported by G-Research and the EPSRC Centre for Doctoral Training in Mathematics of Random Systems: Analysis, Modelling and Simulation (EP/S023925/1).
}

\vspace{-3em}

\begin{abstract}
  \noindent
  We investigate the stationary measure $\pi$ of SDEs driven by additive fractional noise with any Hurst parameter and establish that $\pi$ admits a smooth Lebesgue density obeying both Gaussian-type lower and upper bounds. The proofs are based on a novel representation of the stationary density in terms of a Wiener-Liouville bridge, which proves to be of independent interest: We show that it also allows to obtain Gaussian bounds on the non-stationary density, which extend previously known results in the additive setting. In addition, we study a parameter-dependent version of the SDE and prove smoothness of the stationary density, jointly in the parameter and the spatial coordinate. With this we revisit the fractional averaging principle of Li and Sieber [Ann. Appl. Probab. 32(5) (2022)] and remove an ad-hoc assumption on the limiting coefficients. Avoiding any use of Malliavin calculus in our arguments, we can prove our results under minimal regularity requirements.\\[1ex]
  \noindent\textbf{MSC2010:} 60G22, 60H10, 37A25.\\
  \noindent\textbf{Keywords:} Fractional Brownian motion, parameter-dependent SDE, smoothness of the invariant density, Girsanov theorem, Wiener-Liouville bridge, Gaussian-type bounds, smooth density.
\end{abstract}

{
\small
\hypersetup{hidelinks}
\setcounter{tocdepth}{2}
\tableofcontents
}

\newpage

\restoregeometry

\section{Introduction and Main Results}

The aim of the article is to investigate the stationary solution to the stochastic differential equation (SDE)
\begin{equation}\label{eq:sde}
  dY_t=b(Y_t)\,dt+\sigma\,dB_t 
\end{equation}
as well as the sensitivity of the stationary measure of its parameter-dependent version
\begin{equation}\label{eq:parameter_sde}
  dY_t^\lambda=b(\lambda, Y_t^\lambda)\,dt+\sigma\,d B_t, \qquad Y_0^\lambda=Y_0,\qquad \lambda\in\R^d.
\end{equation}
Here, $\sigma$ is a non-degenerate $n\times n$-matrix, $b:\R^n\to\R^n$, and $(B_t)_{t\geq 0}$ is an $n$-dimensional fractional Brownian motion (fBm) with Hurst parameter $H\in(0,1)$. As an application, we study a two-scale stochastic dynamics describing a non-equilibrium molecular evolution, see \cref{cor:averaging}. 
 
The fBm is a commonly viewed as the simplest stochastic process modelling time-correlated noise. Generalizing the standard Wiener process ($H=\f 12$), a one-dimensional fractional Brownian motion $(B_t)_{t\geq 0}$ with Hurst parameter $H\in (0,1)$ is a centered, self-similar Gaussian process with stationary increments: $\Expec{|B_t-B_s|^{2H}}=|t-s|^{2H}$. However, the increments are correlated with a power law correlation decay. This is the dominant feature of the equation \eqref{eq:sde}. Hence, we shall focus on the non-Markovian case, $H\neq\f 12$, in our subsequent analysis. 

Throughout this article we impose sufficient regularity conditions on $b:\R^n\to\R^n$ so that \eqref{eq:sde} is pathwise well posed and has a uique stationary solution. As shown in \cite{Hairer2005}, these hold if the drift is \emph{contracting off the diagonal}, see \eqref{eq:off_diagonal_contraction}. Slightly weaker conditions are given in \cite{Hairer2007,Deya2019}. 

 More formally, stationarity of \eqref{eq:sde} means that there is a measure $\prob_\pi\in\P\big(\C(\R_+,\R^n)\big)$ such that the following hold:
\begin{itemize}
  \item If $\bar{Y}_t:\C(\R_+,\R^n)\to\R^n$ is the coordinate process, $\bar{Y}_t(\omega)\define\omega(t)$, then the process
  \begin{equation*}
     \sigma^{-1}\Bigl(\bar{Y}_t-\bar{Y}_0-\int_0^t b(\bar{Y}_s)\,ds\Bigr)_{t\geq 0}
  \end{equation*}
  is an $H$-fBm under $\prob_\pi$.
  \item For $t\geq 0$ let $\T_t:\C(\R_+,\R^n)\to\C(\R_+,\R^n)$, $\T_t\omega\define\omega(\cdot+t)$ be the canonical time-shift. Then $\T_t^{*}\prob_\pi=\prob_\pi$ for each $t\geq 0$.
\end{itemize}
In other words, $\prob_\pi$ is the path space law of a strictly stationary solution to \eqref{eq:sde}. We denote its marginal by $\pi=\prob_{\pi}(\bar{Y}_0\in\cdot)\in\P(\R^n)$.

Under the aforementioned conditions $\pi$ admits a density $p_\infty$ with respect to the $n$-dimensional Lebesgue measure. Indeed, it can be shown that the solution to \eqref{eq:sde} started from $Y_0=0$ is non-degenerate in the sense of Malliavin calculus, which implies that $\L(Y_t)\ll\lambda^n$ at \emph{finite} times $t>0$. The existence of the density $p_\infty$ thus follows from the fact that $\TV{\L(Y_t)-\pi}\to 0$ as $t\to\infty$, see \cite{Hairer2005}.

\medskip

With regards to the stationary densities of \eqref{eq:sde} and \eqref{eq:parameter_sde}, respectively, we address the following questions:

\begin{enumerate}[label=($\mathrm{Q}_{\arabic*}$)]
  \item\label{it:q1} Under what conditions is the mapping $y\mapsto p_\infty(y)$ smooth? 
  \item\label{it:q2} Does the density $p_\infty$ have Gaussian tails, that is, are there $c_1,C_1>0$ such that $p_\infty(y)\leq C_1 e^{-c_1|y|^2}$ for each $y\in\R^n$?
  \item\label{it:q3} Is the density positive everywhere, that is, $p_\infty(y)>0$ for (almost) every $y\in\R^n$ and, if so, does it even satisfy a Gaussian-type lower bound, that is, are there $c_2,C_2>0$ such that $p_\infty(y)\geq C_2 e^{-c_2|y|^2}$ for each $y\in\R^n$?
  \item\label{it:q4}  
  Let us now suppose that the drift $b:\R^d\times\R^n\to\R^n$ is parameter-dependent and consider \eqref{eq:parameter_sde}. Assuming that, for each $\lambda\in\R^d$, there is a unique stationary path space law $\prob_{\pi^\lambda}$ with marginal density $p^\lambda_\infty$, what can be said about the regularity of the mapping
  \begin{equation*}
    (\lambda,y)\mapsto p_\infty^\lambda(y)?
  \end{equation*}
\end{enumerate} 

The first three questions have been addressed for \emph{non-stationary} solutions of \eqref{eq:sde} and even for the case of multiplicative noise:

\begin{enumerate}[label=($\mathrm{Q}_{\arabic*}$)]
  \item The existence of the Lebesgue density $\bar p_t(y_0;\cdot):\R^n\to\R_+$ of the solution to \eqref{eq:sde} started from a deterministic initial condition $y_0\in\R^n$  was established by Nualart and Saussereau \cite{Nualart2009} for elliptic (multiplicative) noise with Hurst parameters $H>\frac12$, see also \cite{Nourdin2006,Hu2007}.  The density $\bar p_t(y_0; \cdot)$ was shown to be smooth by Baudoin and Hairer \cite{Baudoin2007}  under  H\"ormander's condition on the vector fields. This was later extended to $H>\frac13$ in Hairer and Pillai \cite{Hairer2013}; it actually extends to $H>\frac14$ as shown in Cass, Hairer, Litterer, and Tindel \cite{Cass2015}. 

  \item We know only two pieces of works on  Gaussian-type upper bounds for $\bar p_t(y_0;\cdot)$:  The first one is  Baudoin, Ouyang, and Tindel \cite{Baudoin2014} where they proved an upper bound for $H>\frac13$ under a skew-symmetry assumption. This  bound was later generalized to the elliptic case in Baudoin, Nualart, Ouyang, and Tindel \cite{Baudoin2016}.
 
  \item  The strict positivity of the smooth version of $\bar p_t(y_0; \cdot)$ was shown in \cite{Baudoin2016} in the elliptic case for $H>\frac13$, see also the recent preprint \cite{Geng2020}. This was extended to the hypoelliptic case for $H>\frac14$ by Geng, Ouyang, and Tindel \cite{Geng2020hypo}. A Gaussian-type lower bound was proven by Besal\'u, Kohatsu-Higa, and Tindel \cite{Besalu2016} for both one-dimensional systems and for elliptic multiplicative noise for  $H>\frac12$.
\end{enumerate} 

Related problems for the stationary measures on the other hand have been much less studied. Starting with the seminal work of Hairer \cite{Hairer2005}, existence of an invariant measure and convergence rates were obtained in \cite{Panloup2020,Li2020} for additive noise under a contraction condition and in \cite{Fontbona2017,Deya2019} for multiplicative noise. See also the articles \cite{Hairer2007,Rough,Hairer2013} for unique ergodicity results without a rate. To our best knowledge, none of the questions we ask here have been addressed so far. It appears they cannot be easily obtained by limiting arguments from the results on the finite-time, non-stationary density $\bar p_t(y_0;\cdot)$. New arguments are required: Our proofs are based on a combination of a disintegration formula for the stationary density and a careful analysis of the resulting conditioned evolution (with respect to the past of the Brownian motion before $0$). Instead of appealing to Malliavin calculus, we choose in this additive setting to base our proofs entirely
on a Girsanov-type representation of the density. More precisely, we
obtain a novel formula for the stationary density in terms of
the so-called \emph{Wiener-Liouville bridge} (see \cref{sec:Girsanov2}
for details). Combined with long-time stability arguments, a comprehensive study of the properties of the distribution of this Wiener-Liouville bridge allows us to prove the smoothness and Gaussian upper and lower bounds on the density of the stationary distribution. The argument for the smoothness is additionally based on a novel differentiability result for expectations of parameter-dependent Dol\'eans-Dade stochastic exponentials.

In the course of our work on \ref{it:q1}--\ref{it:q4} we realized that our sharp study of the conditional evolution is also useful for directly analyzing $\bar p_t(y_0; \cdot)$. More precisely, we are able to extend the results of \cite{Besalu2016} to a multi-dimensional setting in the small Hurst parameter regime $H<\frac12$ and to dramatically alleviate the regularity assumptions by considering drift coefficients which are merely H\"older continuous when $H>1/2$ and only measurable with at most linear growth when $H<1/2$. By also proving a Gaussian-type upper bound under the very same requirements, our work complements \cite{Baudoin2014,Baudoin2016} in which the authors considered the more general case of multiplicative noise, but had to impose the restrictive requirement of $\C_b^\infty$ coefficients.

\subsection{Main Results}

\paragraph{On the stationary density.} To obtain the regularity results for the stationary density, we impose the following conditions on the drift in \eqref{eq:sde}:

\begin{sk}\hypertarget{sk}{}
\item\label{cond:sk} $b\in\C^k(\R^n,\R^n)$ is Lipschitz continuous and contracting off the diagonal, \emph{i.e.}, there are constants $C,\kappa>0$ such that 
  \begin{equation}\label{eq:off_diagonal_contraction}
    \big\<b(y)-b(z),y-z\>\leq C-\kappa |y-z|^2 \qquad\forall\,y,z\in\R^n.
  \end{equation}
  In addition, if $k\geq 2$, we assume that 
  \begin{equation}\label{eq:growth_sk_1}
    \sup_{y\in\R^n}\sum_{i=2}^{k}\frac{\big|D^i b(y)\big|}{e^{C|y|}}<\infty,
  \end{equation}
  where $D^i$ denotes the total derivative operator of order $i\in\N$.
  Finally, if $H>\frac12$ and $k\geq 1$, we also assume that there is an $\alpha>1-\frac{1}{2H}$ such that
  \begin{equation}\label{eq:growth_sk_2}
     \sup_{|y|,|z|\leq R}\frac{\big|D^k b(y)- D^k b(z)\big|}{|y-z|^\alpha}\leq C e^{CR}\qquad\forall\,R>0.
   \end{equation} 
\end{sk}

The questions \ref{it:q1}--\ref{it:q3} are answered in the following theorem:
\begin{theorem}\label{thm:density}
  Let $H\in(0,1)$, $k\geq 0$, $\sigma\in\Lin{n}$ be non-degenerate, and $b:\R^n\rightarrow\R^n$ satisfy the condition \ref{cond:sk}. Then there is a unique stationary path space law $\prob_\pi$ to the equation \eqref{eq:sde} and its marginal $\pi$ has a Lebesgue density $p_\infty\in\C^k(\R^n)$. Furthermore, there are positive constants $c_1,c_2,C_1,C_2>0$ such that
  \begin{equation}\label{eq:gaussian_bounds}
    C_1 e^{-c_1|y|^2}\leq p_\infty(y) \leq C_2 e^{-c_2|y|^2} \quad\forall y\in\R^n,
  \end{equation}
  and, for any $\balpha\in \N_0^n$ with $|\balpha|\leq k$, there are positive constants $c_{\balpha},C_{\balpha}>0$  such that 
 \begin{equation}\label{eq:gaussian_lower_bound}
   \left|  \partial^{{\balpha}}_y p_\infty(y)\right|\leq C_{\balpha} e^{-c_{\balpha}|y|^2}  \qquad\forall\,y\in\R^n.
     \end{equation} 
\end{theorem}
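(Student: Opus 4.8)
The plan is to bypass Malliavin calculus entirely and to read off all three properties from a single Girsanov-type formula for $p_\infty$ built on the Mandelbrot--Van Ness representation of the fBm. The existence and uniqueness of $\prob_\pi$, the convergence $\TV{\L(Y_t)-\pi}\to 0$, and hence the existence of $p_\infty$, hold under \ref{cond:sk} because the drift is Lipschitz and contracting off the diagonal \eqref{eq:off_diagonal_contraction}; these are exactly the conclusions of \cite{Hairer2005} (with the quantitative rates of \cite{Panloup2020,Li2020}). All of the actual work therefore goes into the quantitative description of $p_\infty$.

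\emph{The Wiener--Liouville representation.} Realizing the two-sided fBm through a two-sided Wiener process $W$, I would split, for $t\ge 0$,
\[
  B_t = g_t + \tilde B_t,\qquad \tilde B_t\define \alpha_H\int_0^t (t-s)^{H-\frac12}\,dW_s,
\]
where $\tilde B$ is the Wiener--Liouville (Riemann--Liouville) process and the past-dependent remainder $g_t$ is a smooth function of $t$ measurable with respect to $\mathcal{G}\define\sigma(W_s:s\le 0)$. Conditionally on $\mathcal{G}$ the process $g$ is frozen and $\tilde B$ is an independent Gaussian Volterra process, and for the stationary solution $Y_0$ is $\mathcal{G}$-measurable. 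Fixing any horizon $T>0$ and writing $Y_T=Y_0+\int_0^T b(Y_s)\,ds+\sigma g_T+\sigma\tilde B_T$, I would transfer the drift onto a Cameron--Martin shift of $W$ by applying the inverse Volterra operator $K_H^{-1}$ to $\sigma^{-1}b(Y_\cdot)$; the H\"older/growth hypotheses in \ref{cond:sk} (and the threshold $\alpha>1-\frac1{2H}$ when $H>\frac12$) are precisely what place this shift in the Cameron--Martin space. Disintegrating against $\mathcal{G}$ and applying Girsanov then yields, for every $T>0$ by stationarity,
\[
  p_\infty(y)=\EE\!\left[\,q_T\big(y-Y_0-\sigma g_T\big)\,\EE\big[\mathcal{E}_T\mid Y_T=y\big]\right],
\]
where $q_T$ is the explicit Gaussian density of $\sigma\tilde B_T$, $\mathcal{E}_T=\exp(\int_0^T u_s\,dW_s-\frac12\int_0^T|u_s|^2\,ds)$ with $u=K_H^{-1}[\sigma^{-1}b(Y_\cdot)]$, the inner conditioning is the titular Wiener--Liouville bridge, and the outer expectation is over the past $\mathcal{G}$.

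\emph{Reading off the three conclusions.} Smoothness \ref{it:q1} and the derivative bounds \eqref{eq:gaussian_lower_bound} come from differentiating this formula in $y$: derivatives landing on $q_T$ generate polynomial prefactors absorbed into the Gaussian decay, while derivatives landing on the bridge expectation of $\mathcal{E}_T$ (through the pinning $\{Y_T=y\}$) are governed by the differentiability result for parameter-dependent Dol\'eans--Dade exponentials developed for this purpose, the real task being to make every bound uniform in $T$. For the upper bound in \eqref{eq:gaussian_bounds} \ref{it:q2} I would estimate $q_T(y-Y_0-\sigma g_T)\le C e^{-\frac c2|y|^2}e^{c|Y_0+\sigma g_T|^2}$ and absorb the random shift using the Gaussian moments of $(Y_0,g_T)$ --- finite by the Lyapunov structure implied by \eqref{eq:off_diagonal_contraction} and by Fernique --- against the $L^{1+}$-integrability of $\mathcal{E}_T$ (Novikov/Cauchy--Schwarz), taking $c$ small enough to leave a strictly positive rate $c_2$. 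For the lower bound \ref{it:q3} I would restrict the outer expectation to a positive-probability event on which $|Y_0|,|g_T|$ are bounded, invoke the Gaussian \emph{lower} bound for $q_T$ there, and bound the conditional weight $\EE[\mathcal{E}_T\mid Y_T=y]$ below by a positive constant via Jensen's inequality together with a uniform control of the bridge drift energy $\int_0^T|u_s|^2\,ds$.

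\emph{Main obstacle.} I expect the genuine difficulty to be twofold: (a) rigorously justifying the representation, namely that $K_H^{-1}[\sigma^{-1}b(Y_\cdot)]$ is a bona fide Cameron--Martin shift and that disintegration and Girsanov may be combined as claimed; and (b) the quantitative analysis of the Wiener--Liouville bridge needed for the lower bound and for the uniform-in-$T$ estimates that let limits, expectations, and $y$-derivatives be interchanged. Both are sharpest in the rough regime $H<\frac12$ with merely measurable, linearly growing drift, where $K_H^{-1}$ is a true fractional derivative and $u$ is not pathwise regular, forcing the stochastic integrals appearing in $\mathcal{E}_T$ and in the bridge analysis to be handled in $L^2(W)$ rather than pathwise.
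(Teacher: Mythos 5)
Your proposal follows essentially the same route as the paper: the Mandelbrot--Van Ness splitting of $B$ into past plus Liouville innovation, disintegration of $p_\infty$ over the past (the paper's \cref{prop:integral_density}, phrased there via Hairer's auxiliary Markov process on $\R^n\times\cB_H$), Girsanov with the inverse-Volterra/fractional-derivative shift, the Wiener--Liouville bridge representation of the conditional weight, the parameter-dependent Dol\'eans--Dade differentiability lemma (\cref{lem:parameter_doleans_dade}) for smoothness, Jensen for the lower bound and Cauchy--Schwarz plus exponential moments for the upper bound, all integrated against the invariant measure whose Gaussian tails come from the off-diagonal contraction and Fernique (\cref{cor:invariant_measure}). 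The one imprecision worth noting: since $b$ has linear growth under \ref{cond:sk}, the bridge weight $\EE[\mathcal{E}_T\mid Y_T=y]$ is bounded below not by a positive constant but by $e^{-c|y-\ell(T)|^2}$ (as in \cref{prop:lower_bound_conditioned_process}), which is harmless because it is simply absorbed into the Gaussian rate $c_1$; also, for the stationary density no uniformity in $T$ is needed---one fixed small $T_0$ suffices, the bootstrapping in time being required only for the non-stationary bounds of \cref{thm:lower_bound_tindel}.
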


The proof of \cref{thm:density} is achieved in \cref{sec:smoothness}.

\begin{remark}
  As mentioned earlier, \cref{thm:density} is based on the representation of the stationary density as average of a representation of the conditional density in terms of a Wiener-Liouville bridge as well as classical semimartingale techniques. Such results could be also obtained with Malliavin calculus. Nevertheless, it is worth noting that here, the regularity of the density is exactly the same as the one of $b$, whereas Malliavin calculus methods usually require more regularity for the drift (typically, to obtain a ${\cal C}^k$ density, one requires $b$ to be at least ${\cal C}^{k+\frac{n}{2}}$). Unfortunately, the bridge representation hinges on the Girsanov theorem, which in turn limits our method to additive noise. Extensions to multiplicative noise (probably by appealing to Malliavin calculus) are left to a future work.
\end{remark}

\paragraph{On the non-stationary density.} In our second main result, we
focus on non-stationary densities. As mentioned earlier, Gaussian-type
bounds on the non-stationary density can be also derived from our sharp
study of the conditional evolution. This allows us to extend
\cite[Theorem 1.2]{Besalu2016} to multi-dimensional systems with small
Hurst parameters and to considerably weaken the regularity requirements
on the drift vector field.  In the same way, the existing
upper-bounds of the literature (see \emph{e.g.} \cite{Baudoin2014,Baudoin2016}) are here obtained under optimal regularity assumptions in this specific additive setting. [The results of Besal\'u et al. and Baudoin et al. require $b\in\C_b^\infty(\R^n,\R^n)$.]

Let $\gamma\in(0,1]$. We say that $b:\R^n\to\R^n$ is $\gamma$-H\"older if
\begin{equation}\label{eq:local_holder}
  \|b\|_{\C^\gamma}\define\sup_{y,z\in\R^n}\frac{\big|b(y)-b(z)\big|}{|y-z|^\gamma}<\infty.
\end{equation}
We also write $\C^\gamma(\R^n,\R^n)$ for the space of $\gamma$-H\"older functions. We emphasize that these spaces do \emph{not} satisfy the usual inclusions $\C^\gamma(\R^n,\R^n)\subset\C^{\gamma^\prime}(\R^n,\R^n)$ for $\gamma^\prime<\gamma$.

\begin{cond_H}
  \item\label{cond:h} If $H>\frac12$, we assume that there is an $\alpha>1-\frac{1}{2H}$ such that $b\in\C^{\alpha}(\R^n,\R^n)$. If $H<\frac12$, $b:\R^n\to\R^n$ is Borel measurable with at most linear growth, \emph{i.e.}, there is a constant $C>0$ such that $\big|b(y)\big|\leq C\big(1+|y|\big)$ for all $y\in\R^n$.
\end{cond_H}

\begin{theorem}\label{thm:lower_bound_tindel}
  Consider the equation \eqref{eq:sde} with $\sigma\in\Lin{n}$ invertible and deterministic initial point $y_0\in\R^n$. If $b:\R^n\to\R^n$ satisfies assumption \ref{cond:h}, then \eqref{eq:sde} has a pathwise unique strong solution. In addition, the distribution of the solution at any positive finite time $T>0$ has a Lebesgue density $\bar{p}_t(y_0;.)$ and, for each $K\subset\R^n$ compact, there are constants $c_1,c_2,C_1,C_2>0$ (only $C_1,C_2$ depend on $K$) such that
  \begin{equation}\label{eq:lower_bound_tindel}
    \frac{C_1}{t^{nH}}\exp\left(-c_1\frac{|y-y_0|^2}{t^{2H}}\right)\leq \bar{p}_t(y_0;y)\leq\frac{C_2}{t^{nH}}\exp\left(-c_2\frac{|y-y_0|^2}{t^{2H}}\right) \qquad\forall\,y\in\R^n
  \end{equation}
  for all $y_0\in K$ and all $t\in(0,T]$.
\end{theorem}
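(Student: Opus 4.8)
The plan is to combine a self-similar rescaling of the equation with a Girsanov-based representation of the density as an explicit Gaussian weight times a conditional Dol\'eans-Dade exponential, and then to bound this exponential from above and below by constants times harmless sub-Gaussian factors. I would first dispose of well-posedness: for $H>\frac12$ the $\alpha$-H\"older drift makes $Y$ the Young/rough solution, while for $H<\frac12$ the pathwise unique strong solution for a merely measurable $b$ of linear growth is supplied by the regularization-by-noise theory for fBm. In either regime $Y$ is a strong solution, so it remains to establish \eqref{eq:lower_bound_tindel}.

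Second, I would exploit self-similarity, $B_{t\,\cdot}\stackrel{d}{=}t^{H}B_\cdot$, to reduce to the unit horizon. Setting $Z_s=t^{-H}(Y_{ts}-y_0)$ turns \eqref{eq:sde} into $dZ_s=\tilde b_t(Z_s)\,ds+\sigma\,d\tilde B_s$ with rescaled drift $\tilde b_t(z)=t^{1-H}b(y_0+t^{H}z)$ and $\tilde B$ again an $H$-fBm, and the density of $Z_1$ at $z$ equals $t^{nH}\bar p_t(y_0;y_0+t^{H}z)$; thus the claimed bounds are equivalent to uniform Gaussian bounds for the law of $Z_1$. The gain is that $\tilde b_t$ stays controlled on $(0,T]$: linear growth of $b$ yields $|\tilde b_t(z)|\leq C(1+|z|)$ uniformly for $t\leq T$, $y_0\in K$, and for $H>\frac12$ the H\"older seminorm transforms as $\|\tilde b_t\|_{\C^{\alpha}}=t^{1-H+H\alpha}\|b\|_{\C^{\alpha}}$, whose exponent $1-H+H\alpha>\frac12$ precisely because $\alpha>1-\frac{1}{2H}$, hence remains bounded. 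This both explains the threshold $1-\frac1{2H}$ and removes all $t$- and $y_0$-dependence from the constants beyond the stated prefactors.

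Third, for the unit-time bounds I would invoke the Volterra representation $B_\cdot=\int_0^\cdot K_H(\cdot,s)\,dW_s$ and apply Girsanov's theorem to the driving Wiener process $W$, passing to a reference measure $\tilde\prob$ under which $Z$ is driftless, so that $Z_1=\sigma\tilde B_1\sim\mathcal N(0,\sigma\sigma^{\top})$ has the explicit density $q(z)$. Disintegrating over the endpoint gives $p_{Z_1}(z)=q(z)\,\EE_{\tilde\prob}[\E\mid Z_1=z]$, where $\E=d\prob/d\tilde\prob$ is the Dol\'eans-Dade exponential of the Girsanov shift and the conditioning turns the underlying noise into the Wiener-Liouville bridge analyzed in \cref{sec:Girsanov2}. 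Since $q(z)$ is exactly of the form $e^{-c|\sigma^{-1}z|^{2}}$ up to constants, it remains to show that $\Theta(z)\define\EE_{\tilde\prob}[\E\mid Z_1=z]$ lies between $c'e^{-\varepsilon|z|^2}$ and $C'e^{\varepsilon|z|^2}$ for a small $\varepsilon$; these sub-Gaussian corrections are absorbable into $q$ and, after undoing the scaling $\bar p_t(y_0;y)=t^{-nH}p_{Z_1}(t^{-H}(y-y_0))$, produce \eqref{eq:lower_bound_tindel} with possibly different $c_i,C_i$.

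The heart of the matter, and the main obstacle, is this last step. For the upper bound I would use conditional Jensen together with a Novikov-type exponential moment of $\E$, for which the linear growth of $\tilde b_t$ and the Gaussian integrability of the bridge suffice, the factor $e^{\varepsilon|z|^2}$ accounting for the growth of the drift along a bridge pinned at a distant endpoint $z$. For the lower bound I would instead use conditional Jensen in the form $\Theta(z)\geq\exp\big(\EE_{\tilde\prob}[\log\E\mid Z_1=z]\big)$ and bound the conditional mean of $\log\E$ from below by the same bridge estimates. The genuine difficulties are (i) making the Girsanov change of measure and its conditional disintegration rigorous when $H<\frac12$, where $K_H$ is singular and $b$ is only measurable, so that the shift lives in a fractional-Sobolev space whose square-integrability and exponential moments must be verified; and (ii) establishing the integrability of the drift functionals evaluated along the Wiener-Liouville bridge \emph{uniformly} in the pinned endpoint $z$. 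These are exactly the properties of the bridge developed in the core sections, so the proof ultimately amounts to assembling them and reversing the scaling.
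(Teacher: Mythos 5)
Your well-posedness discussion and your lower bound are essentially sound and parallel the paper's own route: Jensen's inequality applied to the conditional (bridge) representation of the Dol\'eans--Dade exponential needs only first moments of the drift functionals, so it works at \emph{any} horizon; this is exactly \cref{prop:lower_bound_conditioned_process}~\ref{it:conditioned_density_1}, which the paper then integrates against the Wiener measure, using fractional Brownian scaling to handle the history process. (One small correction: for $H>\tfrac12$ an $\alpha$-H\"older drift with $\alpha>1-\tfrac1{2H}$ does not make $Y$ a classical Young solution---pathwise uniqueness in \emph{both} regimes rests on regularization by noise, cf.\ \cref{prop:strong_well_posedness_tindel}. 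Also, conditioning the rescaled equation on its endpoint produces a bridge associated with the Volterra kernel $K_H$ on $[0,1]$, not literally the Wiener--Liouville bridge of \cref{sec:Girsanov2}; the analysis adapts, but it is not a direct citation.)

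The genuine gap is in the upper bound. Your rescaling converts the claim ``Gaussian upper bound for $\bar p_t(y_0;\cdot)$, all $t\in(0,T]$'' into ``Gaussian upper bound for the law of $Z_1$ at unit time, uniformly over the drift family $\tilde b_t(z)=t^{1-H}b(y_0+t^Hz)$''. But the Novikov/Cauchy--Schwarz argument you invoke requires exponential moments of the form $\EE\bigl[\exp\bigl(\rho\int_0^1|\fL_s|^2\,ds\bigr)\bigr]$, in which the coefficient multiplying the squared bridge norms and the squared endpoint $|z|^2$ is proportional to the (square of the) drift's size; by Fernique's theorem (\cref{prop:fernique}, \cref{lem:conditioning_holder}) such moments are finite, and the resulting factor $e^{\varepsilon|z|^2}$ absorbable into the Gaussian weight, only when that coefficient lies below a fixed threshold. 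Scaling makes $\tilde b_t$ small only when $t$ is small (its linear-growth coefficient is of order $t$, its H\"older norm of order $t^{1-H+H\alpha}$); for $t$ comparable to $T$ the rescaled drift has size of order $T^{1-H+H\alpha}$, which is not small, and no smallness parameter remains. This is precisely the obstruction that confines \cref{prop:lower_bound_conditioned_process}~\ref{it:conditioned_density_2} to a small horizon $T_0$: your reduction merely trades ``large time'' for ``large drift'' and so relocates the difficulty without removing it. The paper needs a genuinely additional mechanism to pass from $T_0$ to arbitrary $T$, namely the fractional Chapman--Kolmogorov equation (\cref{lem:fraction_chapman_kolmogorov}) combined with an induction over time intervals of length $\ft$, where the non-Markovian history term is controlled by the sub-Gaussian estimates of \cref{lem:sum_subgaussian,lem:holder_norm_bar_b,lem:gaussian_moments_solution}. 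Your proposal contains no substitute for this bootstrapping step; the closing assertion that the required uniform-in-endpoint integrability consists of ``exactly the properties of the bridge developed in the core sections'' is true only in the small-time (equivalently, small-drift) regime, and the proof as outlined therefore establishes the upper bound in \eqref{eq:lower_bound_tindel} only for $t\leq t_0(b,K,T)$, not for all $t\in(0,T]$.
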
 

The well-posedness of \eqref{eq:sde} under assumption \ref{cond:h} is proven in \cref{prop:strong_well_posedness_tindel}, generalizing \cite{Catellier2016} to unbounded drifts. We believe that this requirement is in general optimal for well-posedness of \eqref{eq:sde}. The density estimates are established in \cref{sec:proof_tindel}.

\begin{remark} \leavevmode
  \begin{enumerate}
    \item It is straight-forward to adapt our method of proving \cref{thm:lower_bound_tindel} to allow a non-square diffusion matrix $\sigma\in\Lin[m]{n}$ acting on an $m$-dimensional fBm. Then the bounds \eqref{eq:lower_bound_tindel} hold with $n$ replaced by $m$:
    \begin{equation*}
      \frac{C_1}{t^{mH}}\exp\left(-c_1\frac{|y-y_0|^2}{t^{2H}}\right)\leq \bar{p}_t(y_0;y)\leq\frac{C_2}{t^{mH}}\exp\left(-c_2\frac{|y-y_0|^2}{t^{2H}}\right).
    \end{equation*}
    This slightly more general setting is studied in \cite{Besalu2016}. 

    \item The fact that the constants $C_1, C_2$ in \eqref{eq:lower_bound_tindel} depend on $y_0$ in general can be easily seen by choosing $b(y)=y$ in \eqref{eq:sde}. However, an easy adaptation of our proof shows that $C_1$ and $C_2$ can be chosen independently of $y_0$, provided that $b$ is bounded.

    \item Even though the lower bound is mainly a by-product of our study of the conditional evolution, it is worth noting that the upper bound requires a tailored argument. Actually, for the upper bound on the invariant density, we derive a corresponding estimate for the conditional evolution only at a small enough time $T_0>0$. In order to extend it to any $T>0$, we implement a bootstrapping argument with the help of a Chapman-Kolmogorov-type equation in this non-Markovian setting (see \cref{lem:fraction_chapman_kolmogorov} and \cref{sec:proof_tindel} for details).
  \end{enumerate}

\end{remark}

\paragraph{Smoothness of the stationary density in a parameter.} 

For $H=\f 12$, the invariant measure of \eqref{eq:parameter_sde} solves the elliptic equation $\mathscr{L}_\lambda^* \pi^\lambda=0$, where $\mathscr{L}_\lambda$ is the infinitesimal generator of $Y^\lambda$ and $\mathscr{L}_\lambda^*$ is its adjoint in $L^2(\R^n,\pi^\lambda)$. Therefore, the regularity of $\pi^\lambda$ follows from standard PDE theory, see \emph{e.g.} \cite{Pardoux2001,Pardoux2003,Pardoux2005,Veretennikov2011,Bogachev2015,Assaraf2018}. For Markov generators satisfying H\"ormander's condition, see also \cite{Li2018}. 

Since this approach fails for $H\neq\f 12$ and there is no explicit ansatz for the stationary measure allowing for a concrete analysis, it is not surprising that---to the best of our knowledge---there are no previous studies of the smoothness of the stationary measure in the parameter.  Before stating the result we first introduce parameter-dependent versions of assumption \ref{cond:sk}:

\begin{psk_loc}\hypertarget{psk_loc}{}
  \item\label{cond:psk_loc} We assume that, for each $K\subset\R^d$ compact, there are constants $C,\kappa>0$ such that the drift $b\in\C^k\big(\R^d\times\R^n,\R^n\big)$ satisfies all of the following:
  \begin{itemize}
    \item We have that 
    \begin{equation}\label{eq:local_uniform_lipschitz}
      \sup_{\lambda\in K}\sup_{y,z\in\R^n}\frac{\big|b(\lambda,y)-b(\lambda, z)\big|}{|y-z|}\leq C
    \end{equation}
    and
    \begin{equation}\label{eq:off_diagonal_contractive_lambda}
      \sup_{\lambda\in K}\bigl<b(\lambda,y)-b(\lambda,z),y-z\big\>\leq C-\kappa|y-z|^2, \qquad \forall\, y,z\in \R^n.
    \end{equation}
     
    \item If $k\geq 2$ the derivatives satisfy the estimate 
    \begin{equation}\label{eq:growth_bound}
      \sup_{\lambda\in K}\sup_{y\in\R^n}\sum_{i=2}^k\frac{\big|D_y^i b(\lambda,y)\big|}{e^{C|y|}}\leq C.
    \end{equation}
    
    \item In case of $H>\frac12$ and $k\geq 1$, there is additionally a number $\alpha>1-\frac{1}{2H}$ such that
    \begin{equation}\label{eq:holder_bound}
      \sup_{\lambda\in K}\sup_{|y|,|z|\leq R}\frac{\big|D^k_y b(\lambda,y)- D_y^k b(\lambda,z)\big|}{|y-z|^\alpha}\leq C e^{C R}\qquad\forall\,R>0.
    \end{equation}
  \end{itemize}
\end{psk_loc}

\begin{psk}\hypertarget{psk}{}
  \item\label{cond:psk} $b:\R^d\times\R^n\to\R^n$ falls in the regime of \ref{cond:psk_loc}, but \eqref{eq:local_uniform_lipschitz}--\eqref{eq:holder_bound} hold for constants $C,\kappa>0$ that can be chosen uniformly in $\lambda\in\R^d$.
\end{psk}

\begin{theorem}\label{thm:smooth}
  Let $H\in(0,1)$, $k\geq 0$, $\sigma\in\Lin{n}$ be non-degenerate, and $b:\R^d\times\R^n\to\R^n$ satisfy \ref{cond:psk_loc}. Then, for each $\lambda\in\R^d$, \eqref{eq:parameter_sde} has a unique stationary measure $\pi^\lambda$,  which also has a Lebesgue density $p_\infty^\lambda$. In addition, the mapping $(\lambda, y)\mapsto p_\infty^\lambda(y)$ is in $\C^k$. 

  If $b$ further satisfies \ref{cond:psk}, then $p_\infty^\lambda\in\C_b^k(\R^d\times\R^n)$.
\end{theorem}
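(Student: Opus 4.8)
The existence and uniqueness of $\pi^\lambda$, together with the fact that $p_\infty^\lambda\in\C^k(\R^n)$ for each fixed $\lambda$, are not genuinely new here: restricting \ref{cond:psk_loc} to a single parameter value $\lambda$ yields precisely assumption \ref{cond:sk}, so \cref{thm:density} applies verbatim and moreover furnishes the Gaussian bounds \eqref{eq:gaussian_bounds}--\eqref{eq:gaussian_lower_bound}. The point to exploit is that the constants in \ref{cond:psk_loc} can be chosen uniformly for $\lambda$ ranging over a compact set $K\subset\R^d$, so these bounds---including the constants $c_\balpha,C_\balpha$ governing the $y$-derivatives---are themselves locally uniform in $\lambda$. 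The genuinely new assertion is the \emph{joint} regularity of $(\lambda,y)\mapsto p_\infty^\lambda(y)$, and my plan is to obtain it by revisiting the Wiener--Liouville bridge representation of \cref{sec:Girsanov2} and differentiating in $\lambda$ exactly as one differentiates in $y$.

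Concretely, the representation expresses the stationary density as an average over the conditioning past of a conditional density, the latter taking the form of an expectation $\Expec{g_y\,\cE^\lambda_y}$, where $g_y$ is a deterministic, smooth Gaussian-bridge density and $\cE^\lambda_y$ is a Dol\'eans--Dade stochastic exponential whose exponent is a stochastic integral built from $b(\lambda,\cdot)$ evaluated along the bridge anchored at $y$. Since \ref{cond:psk_loc} guarantees $b\in\C^k(\R^d\times\R^n,\R^n)$ jointly, the integrand $(\lambda,y)\mapsto g_y\,\cE^\lambda_y$ is, pathwise, jointly $\C^k$. The whole task therefore reduces to justifying differentiation under the expectation (and under the averaging integral over the past) up to total order $k$, and to verifying that the resulting derivatives are jointly continuous with the requisite Gaussian-in-$y$, locally-uniform-in-$\lambda$ control.

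This is the heart of the matter and precisely where the announced differentiability result for expectations of parameter-dependent Dol\'eans--Dade exponentials enters. Differentiating $\cE^\lambda_y$ in $\lambda$ (resp.\ in $y$) pulls down stochastic integrals involving $D_\lambda b$ and $D_y b$ and their higher mixed analogues; each such factor is controlled by the growth and H\"older bounds \eqref{eq:growth_bound}--\eqref{eq:holder_bound}, which are exactly tuned to the regularity threshold $\alpha>1-\frac1{2H}$ needed for the Wiener--Liouville integrals to be well defined when $H>\frac12$. Combining these with the exponential-moment estimates for the stochastic exponential already underlying \cref{thm:density}, I would produce, on each compact $K\ni\lambda$, an integrable dominating function for every mixed derivative $\partial_\lambda^\bbeta\partial_y^\balpha\bigl(g_y\,\cE^\lambda_y\bigr)$ with $|\balpha|+|\bbeta|\le k$. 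Dominated convergence then legitimizes the interchange, yielding existence and joint continuity of all mixed partials of total order $\le k$---that is, $(\lambda,y)\mapsto p_\infty^\lambda(y)\in\C^k(\R^d\times\R^n)$---together with bounds of the form $\bigl|\partial_\lambda^\bbeta\partial_y^\balpha p_\infty^\lambda(y)\bigr|\le C_K\,e^{-c|y|^2}$ for $\lambda\in K$.

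Finally, upgrading to the bounded statement under \ref{cond:psk} is immediate: there the constants in \eqref{eq:local_uniform_lipschitz}--\eqref{eq:holder_bound} are uniform over all of $\R^d$, so the dominating functions and hence the derivative bounds hold with $K=\R^d$, giving uniform boundedness of every partial of order $\le k$ and thus $p_\infty^\lambda\in\C_b^k(\R^d\times\R^n)$. The main obstacle I anticipate is not the bookkeeping of derivatives but the uniform-in-$\lambda$ integrability of the differentiated stochastic exponentials: one must ensure that differentiating in the parameter does not destroy the (uniform) exponential moments, a delicate point because $\lambda$ sits inside the stochastic integral defining the Girsanov weight. Securing these moment estimates---locally uniformly in $\lambda$, and globally under \ref{cond:psk}---is the crux, and it is exactly what the parameter-dependent Dol\'eans--Dade differentiability lemma is designed to deliver.
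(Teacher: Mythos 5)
Your proposal is correct and follows essentially the same route as the paper: the joint differentiation of the Wiener--Liouville bridge representation in $(\lambda,y)$ via the parameter-dependent Dol\'eans--Dade lemma (\cref{lem:parameter_doleans_dade}) is precisely what \cref{prop:equiv410parameter} establishes, and the passage to $p_\infty^\lambda$ by integrating the resulting locally-uniform-in-$\lambda$, Gaussian-in-$y$ derivative bounds against the invariant measure (whose uniform exponential moments are supplied by \cref{cor:invariant_measure}) is exactly how the paper concludes, including the upgrade to $\C_b^k$ under \ref{cond:psk}. Note that your dominated-convergence step through ``the averaging integral over the past'' is stated at the same level of detail as the paper's own one-line proof, which likewise does not dwell on the fact that the averaging measure $\I_{\pi^\lambda}$ appearing in \cref{prop:integral_density} itself depends on $\lambda$.
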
 

The parameter-dependent setting of \cref{thm:smooth} makes it difficult to prove a comparable result with the tools of Malliavin calculus. In fact, this would require an integration by parts formula not only in the spatial coordinate, but also with respect to the parameter. To the best of our knowledge such a result is not available so far.

\paragraph{Application: The fractional averaging principle revisited.} 

While \cref{thm:smooth} is interesting in its own right, our original motivation for studying the parameter-dependent equation \eqref{eq:parameter_sde} stems from our recent work on the $\varepsilon\to 0$ of the \emph{slow-fast system}
\begin{alignat}{4}
  dX_t^\varepsilon&=f(X_t^\varepsilon,Y_t^\varepsilon)\,dt+g(X_t^\varepsilon,Y_t^\varepsilon)\,dB_t, &\qquad X_0^\varepsilon&=X_0, \label{eq:slow}\\
  dY_t^\varepsilon &= \frac{1}{\varepsilon}b(X_t^\varepsilon,Y_t^\varepsilon)\,dt+\frac{1}{\varepsilon^{\hat H}}\sigma\,d\hat B_t, &\qquad Y_0^\varepsilon&=Y_0, \label{eq:fast}
\end{alignat}
where $B$ and $\hat B$ are independent fBms with Hurst parameters $H\in\big(\frac12,1\big)$ and $\hat{H}\in\big(1-H,1\big)$, respectively. Let us suppose that, for each $x\in\R^d$, the equation
\begin{equation}\label{eq:fixed_x_fast}
  dY_t^x=b(x,Y_t^x)\,dt+\sigma\,d\hat{B}_t
\end{equation}
has unique stationary measure $\pi^x\in\P(\R^n)$. Under a set of conditions, it was shown in \cite{Li2020} that $X^\varepsilon\to\bar{X}$ in H\"older norm in probability, where the limit solves the \emph{effective equation}
\begin{equation}\label{eq:effective_equation}
  d\bar{X}_t=\bar{f}(\bar{X}_t)\,dt+\bar{g}(\bar{X}_t)\,dB_t.
\end{equation}
Here, $\bar{f}(x)=\int_{\R^n} f(x,y)\,\pi^x(dy)$ and \emph{mutatis mutandis} for $\bar{g}$. To obtain the convergence $X^\varepsilon\to\bar{X}$, \cite[Theorem 1.2]{Li2020} had to impose the ad-hoc assumption $\bar{g}\in\C_b^2\big(\R^d,\Lin[m]{d}\big)$ to ensure the well-posedness of \eqref{eq:effective_equation}. That work verified that this regularity condition is satisfied for $g\in\C_b^3\big(\R^d\times\R^n,\Lin[m]{d}\big)$ and a \emph{uniformly convex} (or \emph{strongly contractive}) drift vector fields. This is to say that there is a $\kappa>0$ such that $\braket{b(x,y)-b(x,z),y-z}\leq -\kappa|y-z|^2$ for all $x\in\R^d$ and all $y,z\in\R^n$.

\Cref{thm:smooth} allows us remove the ad-hoc smoothness assumption since it is a consequence of the other conditions. To state the improved result we introduce the following class of drifts:
\begin{definition}\label{define-semi-contractive}
  Let $\xi, R\geq 0$ and $\kappa>0$. We write $\S(\kappa, R, \xi)$ for the set of functions $b:\R^d\times\R^n\to\R^n$ satisfying all of the following conditions:
  \begin{itemize}
    \item $b$ falls in the regime of \hyperlink{psk}{$\mathbf{(PS)}_{2,H}$}.
    \item There is a $C>0$ such that
    \begin{equation*}
      \big|b(x,y)\big|\leq C\big(1+|x|+|y|\big)\qquad\forall\,x\in\R^d,\,y\in\R^n.
    \end{equation*}
    \item For each $R>0$ there is an $L_R>0$ such that 
    \begin{equation*}
      \sup_{y\in\R^n}\big|b(x_1,y)-b(x_2,y)\big|\leq L_R|x_1-x_2|\qquad\forall\,|x_1|, |x_2|\leq R.
    \end{equation*}
    \item It holds that
    \begin{equation*}
    \sup_{x\in\R^d}\big\langle b(x,y_1)-b(x,y_2),y_1-y_2\big\rangle\leq\begin{cases}
      -\kappa|y_1-y_2|^2, & |y_1|,|y_2|\geq R,\\
      \xi|y_1-y_2|^2, &\text{otherwise}.\\
  \end{cases}
  \end{equation*}

  \end{itemize}
  
\end{definition}

\begin{corollary}\label{cor:averaging}
  Consider the slow-fast system \eqref{eq:slow}--\eqref{eq:fast} with $X_0\in L^\infty(\Omega)$, $Y_0\in\bigcap_{p\geq 1} L^p(\Omega)$, and suppose that $f:\R^d\times\R^n\to\R^n$ is Lipschitz continuous as well as $g:\R^d\times\R^n\to\Lin[m]{n}$ is of class $\C_b^2$. Let $\alpha<H$. Then, for each $\kappa>0$, there exists a number $\Xi=\Xi(\alpha,\kappa,R)>0$ such that, whenever $b\in \S(\kappa, R, \Xi)$ for each $x\in\R^d$, all of the following hold: 
  \begin{itemize}
    \item For each $x\in\R^d$ the equation \eqref{eq:fixed_x_fast} has a unique stationary path space law $\prob_{\pi^x}$.
    \item For each $T>0$ the solution $X^\varepsilon$ to \eqref{eq:slow} converges to the unique solution $\bar{X}$ of \eqref{eq:effective_equation} in $\C^\alpha\big([0,T],\R^d\big)$ in probability as $\varepsilon\to 0$.
  \end{itemize}
\end{corollary}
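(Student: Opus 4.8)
The plan is to reduce the corollary to \cite[Theorem 1.2]{Li2020}: every hypothesis of that result is either assumed here or standard, \emph{except} its ad-hoc requirement $\bar{g}\in\C_b^2$, which I would now obtain as a consequence of \cref{thm:smooth}. The argument proceeds in three stages. First, I would show that for each $x$ the frozen drift $b(x,\cdot)$ satisfies the parameter-dependent condition \ref{cond:psk} with $k=2$. Second, I would apply \cref{thm:smooth} and pass derivatives onto the averaged coefficients to deduce that $\bar{f}$ is Lipschitz and $\bar{g}\in\C_b^2$. Third, I would feed this into \cite[Theorem 1.2]{Li2020}.

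For the first stage the only nontrivial point is to promote the two-regime dissipativity built into $\S(\kappa,R,\Xi)$ to the genuine off-diagonal contraction \eqref{eq:off_diagonal_contractive_lambda}, uniformly in $x$. Fix $x$ and a pair $y_1,y_2$. If $|y_1|,|y_2|\geq R$ then the defining inequality already gives $\langle b(x,y_1)-b(x,y_2),y_1-y_2\rangle\leq-\kappa|y_1-y_2|^2$, while if $|y_1|,|y_2|<R$ the pair is confined to a set of diameter $<2R$, so the expansive bound $\Xi|y_1-y_2|^2$ is absorbed into an additive constant. In the intermediate case $|y_2|<R\leq|y_1|$ I would insert the point $y_3=(1-s)y_2+sy_1$, $s\in(0,1)$, at which the segment $[y_2,y_1]$ meets $\{|y|=R\}$, and split the inner product accordingly: the outer piece (both endpoints of norm $\geq R$) contributes $-\kappa(1-s)|y_1-y_2|^2$, whereas the inner piece has length $<2R$, so its expansive contribution is only linear in $|y_1-y_2|$ and is absorbed by Young's inequality, leaving contraction at rate $\kappa/2$. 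Since the constants in $\S$ are uniform in $x$, this yields \eqref{eq:off_diagonal_contractive_lambda} uniformly in $x$; together with the Lipschitz and growth requirements read off from $\S$ and the ambient $\C^2$-regularity of $b$, this gives \ref{cond:psk}, so that \cref{thm:smooth} applies and in particular supplies the unique stationary law $\prob_{\pi^x}$ of the first bullet.

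In the second stage, \cref{thm:smooth} with $k=2$ gives that $(x,y)\mapsto p_\infty^x(y)$ is jointly $\C^2$, and the Gaussian-type bounds established along its proof (of which the bounds in \cref{thm:density} are the parameter-free prototype) control the decay in $y$ of $p_\infty^x$ and of its $x$-derivatives up to order two. Writing $\bar{f}(x)=\int_{\R^n} f(x,y)\,p_\infty^x(y)\,dy$ and $\bar{g}(x)=\int_{\R^n} g(x,y)\,p_\infty^x(y)\,dy$, the Lipschitz continuity of $\bar{f}$ follows by estimating $\bar{f}(x_1)-\bar{f}(x_2)$ directly, using that $f$ is Lipschitz and that $x\mapsto p_\infty^x(y)$ is $\C^1$ with $x$-uniform Gaussian control; for $\bar{g}$, the same bounds license differentiating twice under the integral, every derivative landing on $g\in\C_b^2$ or on $p_\infty^x$ and each integrand being dominated by an $x$-uniform Gaussian, whence $\bar{g}\in\C_b^2$. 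This is exactly the hypothesis that \cite{Li2020} had to impose by hand.

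Finally, with the frozen equation uniquely ergodic, $f$ Lipschitz, $g,\bar{g}\in\C_b^2$, and the moment conditions $X_0\in L^\infty(\Omega)$, $Y_0\in\bigcup_{p\geq1}L^p(\Omega)$ in force, \cite[Theorem 1.2]{Li2020} yields $X^\varepsilon\to\bar{X}$ in $\C^\alpha([0,T],\R^d)$ in probability; the threshold $\Xi=\Xi(\alpha,\kappa,R)$ is dictated not by the off-diagonal contraction above (which in fact holds for any $\Xi$) but by the quantitative ergodic estimates that \cite{Li2020} requires to reach the $\C^\alpha$-topology. The technical heart is the contraction computation of the first stage, but I expect the genuinely delicate step to be the second: upgrading the $\C_b^2$-smoothness furnished by \cref{thm:smooth} to $\bar{g}\in\C_b^2$ hinges on \emph{uniform-in-$x$} Gaussian decay (hence $y$-integrability) of the $x$-derivatives of $p_\infty^x$, a property one must extract from the proof of \cref{thm:smooth} rather than from its statement.
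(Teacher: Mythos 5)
Your proposal is correct and takes essentially the same route as the paper, whose entire proof is the one-line observation that $\bar g\in\C_b^2$ by \cref{thm:smooth}, so that \cite[Theorem 1.2]{Li2020} applies. The two verifications you spell out---upgrading the two-regime dissipativity of $\S(\kappa,R,\Xi)$ to the off-diagonal contraction \eqref{eq:off_diagonal_contractive_lambda}, and dominating the $x$-derivatives of $p_\infty^x$ by $x$-uniform Gaussian bounds (extracted from \cref{prop:equiv410parameter} and \cref{cor:invariant_measure} rather than from the statement of \cref{thm:smooth}) in order to differentiate under the integral---are precisely what the paper leaves implicit, and your treatment of them is sound, up to one easy repair in the intermediate case of the contraction argument: when the crossing parameter satisfies $s\geq\tfrac12$ the Young absorption as stated degenerates (its constant blows up as $s\uparrow 1$), but then $s|y_1-y_2|=|y_3-y_2|\leq 2R$ forces $|y_1-y_2|\leq 4R$, so that case is absorbed into the additive constant exactly as in the interior regime.
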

\begin{proof}
  The corollary follows from \cite[Theorem 1.2]{Li2020} since 
  \begin{equation*}
     x\mapsto \bar{g}(x)=\int_{\R^n} g(x, y) p_\infty^x(y)\,dy\in\C_b^2\big(\R^d,\Lin[m]{d}\big)
  \end{equation*}
  by \cref{thm:smooth}.
\end{proof}

\paragraph{Organization of the article.} In \cref{sec:preliminaries} we explain how to build an auxiliary Markov process for \eqref{eq:sde}, collect some results from the theory of fractional integrals and derivatives, and prove a number of preliminary results. In \cref{sec:Girsanov1} we apply the Girsanov theorem to the Liouville process and establish well-posedness of \eqref{eq:sde} under the condition \ref{cond:h} as well as a representation of the conditional density. The latter is then written as an expectation over a so-called Wiener-Liouville bridge in \cref{sec:Girsanov2}, which allows us to prove Gaussian-type bounds on the conditional density and its derivatives. Finally, \cref{sec:proofmain} deduces the main results of the article.

\paragraph{Acknowledgements.}  We thank the three anonymous referees for their helpful comments.

\subsection{Notation} 

We mainly use standard notation: $|\cdot|$ denotes the Euclidean norm (note that the dimension may vary though), $\braket{\cdot,\cdot}$ is the Euclidean scalar product, and $\P(\X)$ is the set of Borel probability measures on a Polish space $\X$. The law of a random variable $X$ is abbreviated by $\L(X)=\prob(X\in\cdot)$. The notation $\C_0\big([0,T],\R^n\big)$ designates the space of continuous functions from $[0, T]$ to $\R^n$ vanishing at $0$. This space becomes a separable Banach space when equipped with the norm $\|f\|_\infty\define\sup_{t\in[0,T]}|f(t)|$. The space of locally H\"older continuous functions $f:\R_+\to\R^n$ of order $\gamma\in(0,1]$ is denoted by $\Cloc^\gamma(\R_+,\R^n)$ and we set $\Cloc^{\gamma-}(\R_+,\R^n)\define\bigcap_{\gamma^\prime\in(0,\gamma)}\Cloc^{\gamma^\prime}(\R_+,\R^n)$ and \emph{mutatis mutandis} for $\Cloc^{\gamma+}$. Recall that $f\in\Cloc^{\gamma}(\R_+,\R^n)$ if $f\restriction_{[0,T]}\in\C^\gamma\big([0,T],\R^n\big)$ for each $T>0$. For $\alpha=k+\gamma$, $k\in\N_0$, $\gamma\in(0,1]$, we understand $\C^\alpha$ as the space of $k$-times differentiable function whose highest order derivative satisfies \eqref{eq:local_holder} with exponent $\gamma$. The H\"older norm is defined by 
\begin{equation*}
  \vertiii{f}_{\C^\alpha}\define\sum_{i=0}^k\big\|D^i f\big\|_\infty+\big\|D^k f\big\|_{\C^\gamma}.
\end{equation*}
{where $\|\,.\|_{\C^\gamma}$ is defined by \eqref{eq:local_holder}.} We write $\C^k$ (resp. $\C^k_b$) for the space of (bounded) $k$-times differentiable functions with (bounded) continuous derivatives. Let $\balpha,\bbeta\in\N_0^n$ be multi-indices. It is customary to write $\balpha\preccurlyeq\bbeta$ if $\balpha_i\leq\bbeta_i$ for each $i=1,\dots,n$. We understand $|\balpha|=\balpha_1+\cdots+\balpha_n$ and, for $g:\R^n\to\R^m$, the notation $\partial^{\balpha}g=\partial_{y_1}^{\balpha_1}\cdots\partial_{y_n}^{\balpha_n}g$ designates the partial derivative of order $\balpha$. Occasionally, we amend these operators by a subscript, \emph{e.g.}, $\partial_y^{\balpha} h(\lambda, y)$ for $h:\R^d\times\R^n\to\R^m$ in order to emphasize the variables on which they act. We let $D^k g(y)\in L\big((\R^n)^{\otimes k},\R^m\big)$ be the $k^{\textup{th}}$ (total) derivative of $g:\R^n\to\R^m$.

Furthermore, $[\mu]_{\X}$ and  $[\mu]_{\Y}$ are the marginals of a measure $\mu\in\P\big(\X\times\Y\big)$. The total variation norm of a measure $\nu\in\P(\X)$ is denoted by $\TV{\nu}=\sup_{A\in\B(\X)}\big|\nu(A)\big|$, where $\B(\X)$ denotes the Borel $\sigma$-field on $\X$. We write $\mu\approx\nu$ if the measures are equivalent, that is, they are absolutely continuous with respect to each other; in symbols $\mu\ll\nu$ and $\nu\ll\mu$.

\section{Preliminaries}\label{sec:preliminaries}

\subsection{Stochastic Dynamical Systems Revisited}\label{sec:invariant_measures}

This section explains the connection between stationary path space laws of the equation \eqref{eq:sde} and the invariant measures of an auxilliary Feller process. To this end, we first recall a standard representation of fBm as an integral of an $n$-dimensional, two-sided standard Wiener process $(W_t)_{t\in\R}$  which will become important in the sequel: In \cite{Mandelbrot1968} Mandelbrot and van Ness showed that there is a normalization constant $\alpha_H>0$ such that 
\begin{equation}\label{eq:mandelbrot}
  B_t=\alpha_H\int_{-\infty}^0\Big((t-u)^{H-\frac12}-(-u)^{H-\frac12}\Big)\,dW_u+\alpha_H\int_0^t (t-u)^{H-\frac12}\,dW_u,\qquad t\geq 0,
\end{equation}
is an fBm with Hurst parameter $H\in(0,1)$. We understand the integrand to be multiplied by the $n\times n$ identity matrix. This representation immediately furnishes the following \emph{locally independent decomposition} of the fractional Brownian increments: For $h,t\geq 0$ we have
\begin{align}
  B_{t+h}-B_t&=\alpha_H\int_{-\infty}^t\Big((t+h-u)^{H-\frac12}-(t-u)^{H-\frac12}\Big)\,dW_u+\alpha_H\int_t^{t+h} (t+h-u)^{H-\frac12}\,dW_u \nonumber\\
  &\define \bar{B}_h^t + \tilde{B}_h^t. \label{eq:innovation_process}
\end{align} 
If $t=0$ we also adopt the short-hands $\bar B$ and $\tilde{B}$. We call $\bar{B}^t$ the \emph{history} and $\tilde{B}^t$ the \emph{innovation process}. The latter is oftentimes referred to as \textit{Liouville process} in the literature (in reference to the Riemann-Liouville integral defined in \eqref{eq:fractional_integral}) and has a long history (see \emph{e.g.} \cite{Levy, Lifshits06}). It agrees with a standard Wiener process when $H=\f 12$.

It is clear that the evolution \eqref{eq:sde} is not Markovian, whence the usual definitions of stationary and invariant measures through the transition semigroup do not apply. However, it was observed by Hairer \cite{Hairer2005} that the joint evolution $\mathfrak{Z}_t\define\big(Y_t,(W_{s+t})_{s\leq 0}\big)$, $t\geq 0$, is indeed Markovian. His approach differed from the standard path space construction in the theory of random dynamical systems \cite{Arnold2013} as it only allows for `physical' stationary solutions. The state space of $(\mathfrak{Z}_t)_{t\geq 0}$ is $\R^n\times\cB_H$, where $\cB_H$ is a separable Banach space of functions $w:\R_-\to\R^n$ on which the (left-sided) Wiener measure $\sW$ is supported and which renders the application $W\mapsto B$ in \eqref{eq:mandelbrot} continuous. More specifically, let $\cC_0^\infty(\R_-,\R^n)$ be the space of smooth compactly supported functions $w:\R_-\to\R^n$ with $w(0)=0$. The space $\cB_H$ is defined as the closure of $\cC_0^\infty(\R_-,\R^n)$ in the norm
\begin{equation*}
  \|w\|_{\cB_H}\define\sup_{s,t\leq 0}\frac{\big|w(t)-w(s)\big|}{\sqrt{1+|t|+|s|}|t-s|^{\frac{1-H}{2}}}.
\end{equation*} In particular, we also understand the initial condition to \eqref{eq:sde} as an initial condition for $(\mathfrak{Z}_t)_{t\geq 0}$:
\begin{definition}\label{def:invariant_measure}
  \leavevmode
  \begin{enumerate}
    \item A probability measure $\mu\in\P\big(\R^n\times\cB_H\big)$ is called a \emph{generalized initial condition} for \eqref{eq:sde} if $[\mu]_{\cB_H}=\sW$.
   \item A generalized initial condition $\I_\pi$ is called an \emph{invariant measure} of \eqref{eq:sde} if it is invariant for the transition semigroup of $(\mathfrak{Z}_t)_{t\geq 0}$. We write $\pi\define[\I_\pi]_{\R^n}$ for its first marginal.
  \end{enumerate}
\end{definition}

If the diffusion coefficient $\sigma\in\Lin{n}$ is non-degenerate, it can also be shown that $(Y_t)_{t\geq 0}$ is strictly stationary if and only if the equation \eqref{eq:sde} is started from an invariant measure of the auxiliary process $(\mathfrak{Z}_t)_{t\geq 0}$, which renders the construction meaningful, see \cite[Propositions 2.17 and 2.18]{Hairer2005}. In general, it may be possible that an invariant measure $\I_\pi$ is not uniquely characterized by its $\R^n$-marginal, whence our notation were ill-defined. However, under the assumptions of this article there is anyways a unique invariant measure so that no confusion can arise:

\begin{proposition}[{\cite[Theorems 1.2 and 1.3]{Hairer2005}}]\label{prop:tv_convergence_invariant_measure}
  Let $b:\R^n\to\R^n$ be Lipschitz continuous. If there are $C,\kappa>0$ such that $\braket{b(y)-b(z),y-z}\leq C-\kappa|y-z|^2$ for all $y,z\in\R^n$, then there is an invariant measure $\I_\pi$ (in the sense of \cref{def:invariant_measure}) for the equation \eqref{eq:sde}. In addition, for each generalized initial condition $\mu\in\P(\R^n\times\cB_H)$ with $\int_{\R^n}|y|\,[\mu]_{\R^n}(dy)<\infty$, the solution $Y$ started in $\mu$ satisfies
  \begin{equation*}
     \lim_{t\to\infty}\tv{\L(Y_t)-\pi}=0.
  \end{equation*} 
\end{proposition}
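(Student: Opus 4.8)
The plan is to treat the statement as a result about the lifted Feller process $(\mathfrak{Z}_t)_{t\geq 0}=\big(Y_t,(W_s)_{s\leq t}\big)$ on $\R^n\times\cB_H$, proving existence by a Krylov--Bogoliubov argument and convergence by an asymptotic coupling. For existence I would first record that the solution map $W\mapsto B\mapsto Y$ is continuous on $\cB_H$ --- this is exactly why $\cB_H$ is defined through the norm rendering \eqref{eq:mandelbrot} continuous, and $b$ being Lipschitz makes the pathwise solution map continuous in the driver --- so that $(\mathfrak{Z}_t)$ is Feller. Second, I would establish a uniform-in-time moment bound $\sup_{t\geq 0}\EE\big[|Y_t|^2\big]<\infty$. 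Fixing a window length $T_0$ and writing, on $[kT_0,(k{+}1)T_0]$, $\Theta_s=Y_{kT_0+s}-\sigma\big(B_{kT_0+s}-B_{kT_0}\big)$, the random ODE $\dot\Theta_s=b\big(\Theta_s+g_s\big)$ with $g_s\define\sigma(B_{kT_0+s}-B_{kT_0})$ satisfies, by the off-diagonal contraction \eqref{eq:off_diagonal_contraction} applied with $y=\Theta_s+g_s$ and $z=g_s$, the estimate $\frac{d}{ds}|\Theta_s|^2\leq 2C-\kappa|\Theta_s|^2+\kappa^{-1}|b(g_s)|^2$. Since the increments of $B$ are stationary, $\sup_{s\leq T_0}|g_s|$ has moments bounded uniformly in $k$; a Gr\"onwall estimate over the window then yields the discrete Lyapunov contraction $\EE|Y_{(k+1)T_0}|^2\leq\theta\,\EE|Y_{kT_0}|^2+C'$ with $\theta\define 2e^{-\kappa T_0}<1$ for $T_0$ large, whence $\sup_k\EE|Y_{kT_0}|^2<\infty$ and, by the within-window bound, $\sup_t\EE|Y_t|^2<\infty$. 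With the Feller property and this moment bound, the $\cB_H$-marginal of every Ces\`aro average $\frac1T\int_0^T\L(\mathfrak{Z}_t)\,dt$ equals the fixed (tight) measure $\sW$, while the $\R^n$-marginal is tight by the moment bound; any weak limit point is then invariant, producing $\I_\pi$ and its marginal $\pi$.

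For the convergence $\tv{\L(Y_t)-\pi}\to 0$ I would couple the solution started from $\mu$ with a stationary solution started from $\I_\pi$. The first ingredient is a same-noise contraction: if $Y,Y'$ are driven by the identical $B$, the noise cancels in the difference and $\frac{d}{dt}|Y_t-Y'_t|^2\leq 2C-2\kappa|Y_t-Y'_t|^2$, so the two copies contract exponentially into a ball of radius $\sim\sqrt{C/\kappa}$. This alone cannot force them to meet, so the second ingredient is a Girsanov-type binding construction: on a sequence of trial intervals I would modify the Wiener increments of one copy by a control steering the difference $Y'-Y$ to $0$ and then keeping the two trajectories equal, the admissibility and Radon--Nikodym cost of the shift being kept finite precisely because the exponential contraction makes the required control square-integrable. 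Showing that each trial binds the trajectories with a probability bounded below uniformly, and iterating, yields that the two laws agree with probability tending to $1$, which is exactly convergence in total variation.

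The hard part will be the coupling in this non-Markovian setting. In the diffusive case one couples at a hitting time and invokes the strong Markov property, but here the memory of the fBm means that equality of the two futures cannot be arranged at a single instant: forcing $Y$ and $Y'$ to coincide from some time onward constrains all subsequent increments through the history term $\bar B^t$ in \eqref{eq:innovation_process}. The delicate points are therefore (i) constructing the binding shift so that the shifted process remains an honest fBm-driven solution while driving the trajectory difference to zero, and (ii) bounding the Girsanov cost uniformly across trials so that the coupling succeeds with positive probability; this is the technical heart of \cite{Hairer2005}, which I would follow, the off-diagonal contraction \eqref{eq:off_diagonal_contraction} being exactly the input that makes both the Lyapunov bound and the summability of the control work.
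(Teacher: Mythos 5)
Your proposal is correct and is essentially the same argument the paper relies on: the paper gives no proof of its own, quoting the result verbatim as Theorems 1.2 and 1.3 of \cite{Hairer2005}, and your sketch---the discrete Lyapunov bound plus Krylov--Bogoliubov on the lifted Feller process $(\mathfrak{Z}_t)$ for existence, followed by the same-noise contraction and Girsanov-type binding coupling for total-variation convergence---is precisely Hairer's proof strategy. Since you correctly carry out the existence half and defer the genuine technical heart (construction and cost estimates of the binding shift in the non-Markovian setting) to the very reference the paper cites, your attempt is, if anything, more detailed than the paper's treatment of this statement.
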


\begin{remark}
  There is another widely popular Volterra integral representation of fBm:
  \begin{equation}\label{eq:decreusefond_representation}  
    B_t=\int_0^t K_H(t,s)\,dW_s,\qquad t\geq 0,
  \end{equation} 
  for a suitable integral kernel $K_H$, see \cite{Decreusefond-Usutunel} for details. This representation is usually used to define the Malliavin derivative with respect to $(B_t)_{t\geq 0}$. Consequently, (almost) all of the works studying the density of $dX_t=b(X_t)\,dt+\sigma(X_t)\,dW_t$ in the \emph{non-stationary regime} are based on \eqref{eq:decreusefond_representation}. However, this representation does not give rise to a \emph{stationary noise process} as defined in \cite[Definition 2.6]{Hairer2005}, whence making the study of stationary measures intractable.  
\end{remark}

Following \cite[Notation 4.1]{Deya2019}, for $\gamma\in(0,1)$ and $k\in\N$ we set
\begin{equation}\label{eq:defmathfrakekgam}
  \mathfrak{E}_\gamma^k\big([0,T],\R^n\big)\define\Big\{f\in\C_0\big([0,T],\R^n\big)\cap\C^k\big((0,T],\R^n\big):\,\|f\|_{\mathfrak{E}_\gamma^k} <\infty \Big\}, 
\end{equation}
where
\begin{equation*}
  \|f\|_{\mathfrak{E}_\gamma^k}\define\sum_{j=1}^k\sup_{t\in(0,T]}t^{j-\gamma}\big|D^{j} f(t)\big|.
\end{equation*}
By \cite[Lemma 6.5(i)]{Deya2019}, for each $\gamma\in(0,H)$, the operator $\cP^H:\C_0^\infty(\R_-,\R^n)\to\mathfrak{E}_\gamma^2\big([0,T],\R^n\big)$ defined by
\begin{equation}\label{PH}
  \cP^H w(t)\define \alpha_H\int_{-\infty}^0\left((t-u)^{H-\frac12}-(-u)^{H-\frac12}\right)\dot{w}(u)\,du,\qquad t\in(0,T],
\end{equation}
and $\cP^Hw(0)=0$ is continuous and consequently extends to a unique bounded linear application on $\cB_H$. 

For $\ell\in\Cloc^{H-}(\R_+,\R^n)$, we will denote by $\Phi_t(\ell)$ a  solution to
\begin{equation}\label{eq:innovation_sde}
  \Phi_t(\ell)=\ell(t)+\int_0^t b\big(\Phi_s(\ell)\big)\,ds+\sigma\tilde{B}_t,\qquad t\geq 0.
\end{equation}
In the sequel, we will only need weak existence and uniqueness in law for \eqref{eq:innovation_sde} for any $\ell\in\Cloc^{H-}(\R_+,\R^n)$. Such a result will be provided in \cref{lem:innovation_sde_well_posed} under Assumption \ref{cond:h}. At this stage, we shall instead record the following important observation: 

\begin{proposition}\label{prop:integral_density}
  Let $\mu\in\P(\R^n\times \cB_H)$ be a generalized initial condition of \eqref{eq:sde}. Then, for each Borel set $A\in\B(\R^n)$ and each $T>0$, we have
  \begin{equation}\label{eq:general_integral}
    \prob\big(Y_T\in A\big)=\int_{\R^n\times\cB_H}\prob\big(\Phi_T(z+\sigma\cP^H w)\in A\big)\,\mu(dz,dw).
  \end{equation}
  In particular if, for each $(z,w)\in\R^n\times\cB_H$, $\L\big(\Phi_T(z+\sigma\cP^H w)\big)$ has a density with respect to the $n$-dimensional Lebesgue measure $\lambda^n$, then $[\mu]_{\R^n}\ll\lambda^n$ and
  \begin{equation}\label{eq:density_integral}
    p_{Y_T}(y)=\int_{\R^n\times\cB_H} p_T(z,w;y)\,\mu(dz,dw)
  \end{equation}
  for Lebesgue-a.e. $y\in\R^n$, where $p_{Y_T}\define\frac{d\L(Y_T)}{d\lambda^n}$ and $p_T(z,w;\cdot)\define\frac{d\L\big(\Phi_T(z+\sigma\cP^H w)\big)}{d\lambda^n}$.

  \emph{A fortiori}, the stationary density satisfies 
  \begin{equation*}
    p_\infty(y)=\int_{\R^n\times\cB_H}p_T(z,w;y)\,\I_\pi(dz,dw)
  \end{equation*}
  for each $T>0$.
\end{proposition}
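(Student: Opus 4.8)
The plan is to derive both identities from the Markov structure of the lift $\mathfrak{Z}_t=\big(Y_t,(W_s)_{s\le t}\big)$ by conditioning the solution on its initial datum $\mathfrak{Z}_0=(z,w)$ and recognising the resulting conditional evolution as the equation \eqref{eq:innovation_sde} driven by the \emph{deterministic} history $\sigma\cP^H w$. The starting point is the locally independent decomposition \eqref{eq:innovation_process} at $t=0$, $B_t=\bar B_t+\tilde B_t$, whose crucial feature is that the history $\bar B$ is a measurable functional of the past $(W_s)_{s\le 0}$ alone, whereas the innovation (Liouville) process $\tilde B_t=\alpha_H\int_0^t(t-u)^{H-\frac12}\,dW_u$ depends only on the increments of $W$ on $[0,\infty)$. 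Since $W$ has independent increments and $W_0=0$, the process $\tilde B$ is thus independent of the entire past $(W_s)_{s\le 0}$, and conditionally on $(W_s)_{s\le 0}=w$ it is distributed as a fresh Liouville process.

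First I would identify the frozen history. I claim that $w\mapsto\cP^H w$ from \eqref{PH} agrees $\sW$-a.s.\ with $w\mapsto\bar B$: both maps are continuous on $\cB_H$ (the former by \cite{Deya2019}, the latter by the Wiener-integral isometry), and they coincide on the dense subspace $\cC_0^\infty(\R_-,\R^n)$, where the Wiener integral reduces to $\bar B_t=\alpha_H\int_{-\infty}^0\big((t-u)^{H-\frac12}-(-u)^{H-\frac12}\big)\dot w(u)\,du=\cP^H w(t)$. Hence, on the event $(W_s)_{s\le 0}=w$, the contribution of the history to $\sigma B$ is the deterministic path $t\mapsto\sigma\cP^H w(t)\in\Cloc^{H-}(\R_+,\R^n)$. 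Plugging the decomposition into \eqref{eq:sde} started at $Y_0=z$, the solution satisfies, conditionally on $\mathfrak{Z}_0=(z,w)$,
\[
  Y_t=z+\sigma\cP^H w(t)+\int_0^t b(Y_s)\,ds+\sigma\tilde B_t,
\]
which is exactly \eqref{eq:innovation_sde} with $\ell=z+\sigma\cP^H w$ and the (conditionally fresh) Liouville process $\tilde B$. Consequently, the conditional law of $Y_T$ given $\mathfrak{Z}_0=(z,w)$ is $\L\big(\Phi_T(z+\sigma\cP^H w)\big)$; taking expectations over $\mathfrak{Z}_0\sim\mu$ and using $[\mu]_{\cB_H}=\sW$, the tower property yields \eqref{eq:general_integral}.

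For the density statement I would invoke Tonelli: if every conditional law admits a density $p_T(z,w;\cdot)$, then after fixing a jointly measurable version of $(z,w,y)\mapsto p_T(z,w;y)$ one has, for each $A\in\B(\R^n)$,
\[
  \prob\big(Y_T\in A\big)=\int_{\R^n\times\cB_H}\int_A p_T(z,w;y)\,dy\,\mu(dz,dw)=\int_A\Big(\int_{\R^n\times\cB_H}p_T(z,w;y)\,\mu(dz,dw)\Big)\,dy,
\]
which both shows $[\mu]_{\R^n}\ll\lambda^n$ and identifies $p_{Y_T}$ with the asserted average \eqref{eq:density_integral}. The stationary formula then follows by taking $\mu=\I_\pi$: starting from the invariant measure makes $Y$ strictly stationary with marginal $\pi$ (so $\L(Y_T)=\pi$, i.e.\ $p_{Y_T}=p_\infty$ for every $T>0$), and the previous display gives the claimed representation of $p_\infty$.

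The main obstacle I anticipate is making the conditioning fully rigorous: precisely, justifying that conditionally on the infinite past $(W_s)_{s\le 0}=w$ the history freezes to the deterministic path $\sigma\cP^H w$ while the innovation survives as an \emph{independent} Liouville process, and establishing the almost-sure identity $\cP^H w=\bar B$ on all of $\cB_H$ (rather than only on smooth $w$) via the continuity of both maps. By comparison, securing a jointly measurable version of $p_T(z,w;y)$ so that Tonelli applies, and the unambiguous meaning of $\prob\big(\Phi_T(z+\sigma\cP^H w)\in A\big)$ granted by uniqueness in law of \eqref{eq:innovation_sde}, are comparatively routine technical points.
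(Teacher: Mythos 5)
Your proposal is correct and follows essentially the same route as the paper: both disintegrate over the initial condition of the Markov lift $\mathfrak{Z}_t=\big(Y_t,(W_s)_{s\leq t}\big)$ and identify the conditional law of $Y_T$ given $\mathfrak{Z}_0=(z,w)$ with $\L\big(\Phi_T(z+\sigma\cP^H w)\big)$, after which the tower property gives \eqref{eq:general_integral}. The only difference is one of detail: the paper delegates the key identification of the conditional evolution to the construction of Hairer (2005), whereas you spell out the history/innovation decomposition, the independence of $\tilde{B}$ from the past, the identification $\cP^H w=\bar{B}$ by density of smooth paths, and the Tonelli and stationarity steps that the paper leaves implicit.
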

\begin{proof}
  Let $(\mathfrak{Z}_t)_{t\geq 0}$ be the auxiliary Markov process constructed above. Then
  \begin{align*}
    \prob\big(Y_T\in A\big)=\prob(\mathfrak{Z}_T\in A\times \cB_H)&=\int_{\R^n\times \cB_H} \prob\big(\mathfrak{Z}_T\in A\times \cB_H \,|\,\mathfrak{Z}_0=(z, w)\big) \, \mu(dz,dw)\\
   &=\int_{\R^n\times \cB_H} \prob\big(Y_T\in A \,|\,\mathfrak{Z}_0=(z, w)\big)\, \mu(dz,dw)\\
   &=\int_{\R^n\times \cB_H} \prob\big(\Phi_T(z+\sigma\cP^H w)\in A\big)\, \mu(dz,dw),
  \end{align*}
  where the last step follows from the construction of \cite{Hairer2005}.
\end{proof}

Albeit being not Markovian, the density of the solution to \eqref{eq:sde} still satisfies a modified Chapman-Kolmogorov relation. This property will be of primal importance in the proof of \cref{thm:lower_bound_tindel}, see in particular \cref{sec:proof_tindel}.

\begin{lemma}[Fractional Chapman-Kolmogorov equation]\label{lem:fraction_chapman_kolmogorov}
  Let $s,t>0$. Suppose that the Lebesgue density $p_s(\ell;\cdot)$ of $\Phi_s(\ell)$ exists for each $\ell\in\Cloc^{H-}(\R_+,\R^n)$. In the notation of \cref{thm:lower_bound_tindel} it then holds that
  \begin{equation*}
    \bar{p}_{t+s}(y_0;y)=\Expec{p_s\Big(Y_t+\sigma\bar{B}^t; y\Big)} \qquad\forall\,y_0,y\in\R^n,
  \end{equation*}
  where $\bar{B}^t$ was defined in \eqref{eq:innovation_process} and $Y$ is the solution to \eqref{eq:sde} with generalized initial condition $\delta_{y_0}\otimes\sW$.
\end{lemma}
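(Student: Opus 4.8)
The plan is to combine the Markov property of the auxiliary process $\mathfrak{Z}_t=\big(Y_t,(W_u)_{u\le t}\big)$ with the locally independent decomposition \eqref{eq:innovation_process} of the fractional increments. The key observation is that the evolution of $Y$ beyond time $t$ is again governed by an equation of the form \eqref{eq:innovation_sde}, driven by a fresh Liouville process but shifted by the history path. Concretely, setting $Z_h\define Y_{t+h}$ and inserting $B_{t+s}-B_t=\bar B_s^t+\tilde B_s^t$ from \eqref{eq:innovation_process} into the integrated form of \eqref{eq:sde} on $[t,t+s]$, one finds
\begin{equation*}
  Z_s=Y_t+\int_0^s b(Z_h)\,dh+\sigma\bar B_s^t+\sigma\tilde B_s^t .
\end{equation*}
This matches equation \eqref{eq:innovation_sde} with innovation $\tilde B^t$ and the (random) shift $\ell\define Y_t+\sigma\bar B^t$; since $\tilde B^t$ has the same law as the Liouville process $\tilde B$ and \eqref{eq:innovation_sde} is well posed in law (\cref{lem:innovation_sde_well_posed}), this suggests that $Y_{t+s}$ should distribute as $\Phi_s(\ell)$, a statement I make precise by conditioning on the past below.

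Before conditioning I would check that $\ell\in\Cloc^{H-}(\R_+,\R^n)$ almost surely, so that $p_s(\ell;\cdot)$ is well defined by hypothesis. Writing $W^t$ for the recentred, time-shifted past $v\mapsto W_{t+v}-W_t$, $v\le 0$, the substitution $v=u-t$ in \eqref{eq:innovation_process} identifies $\bar B^t_h=\cP^H W^t(h)$ with $\cP^H$ as in \eqref{PH}. By stationarity of the Wiener increments, $W^t$ has law $\sW$ on $\cB_H$, and the continuity of $\cP^H\colon\cB_H\to\mathfrak{E}_\gamma^2$ for every $\gamma<H$ (\cite[Lemma 6.5(i)]{Deya2019}) yields $\bar B^t\in\Cloc^{H-}(\R_+,\R^n)$ and hence $\ell\in\Cloc^{H-}(\R_+,\R^n)$.

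Next I would condition on $\F_t\define\sigma\big(W_u\colon u\le t\big)$. Both $Y_t$ and $W^t$, and therefore $\ell=Y_t+\sigma\cP^H W^t$, are $\F_t$-measurable, whereas $\tilde B^t$ is built from the increments of $W$ on $[t,\infty)$ and is independent of $\F_t$. Thus, conditionally on $\F_t$, the shift $\ell$ is frozen and $Y_{t+s}=\Phi_s(\ell)$ is driven by a Liouville process independent of $\F_t$; this is exactly the Markov property of $\mathfrak{Z}$ together with the representation underlying \cref{prop:integral_density}, applied with the time-$t$ state $(Y_t,W^t)$ as generalized initial condition. Consequently, for each $A\in\B(\R^n)$,
\begin{equation*}
  \prob\big(Y_{t+s}\in A\mid\F_t\big)=\int_A p_s\big(Y_t+\sigma\bar B^t;y\big)\,dy\qquad\text{a.s.}
\end{equation*}
Taking expectations and applying Tonelli's theorem (the densities are nonnegative) gives $\prob(Y_{t+s}\in A)=\int_A\Expec{p_s(Y_t+\sigma\bar B^t;y)}\,dy$ for every Borel $A$, which identifies the bracketed expectation as the Lebesgue density $\bar p_{t+s}(y_0;\cdot)$.

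I expect the only genuinely delicate point to be the conditional identification in the last step: one must rigorously justify that, given $\F_t$, the conditional law of $Y_{t+s}$ is that of $\Phi_s$ evaluated at the realised value of $\ell$ and driven by an innovation independent of $\F_t$. This is where the Markov structure of $\mathfrak{Z}$ and the uniqueness in law for \eqref{eq:innovation_sde} enter; the $\F_t$-measurability of $\ell$ together with the independence of $\tilde B^t$ make the freezing legitimate, after which the assertion reduces to the fixed-$\ell$ density statement that is assumed in the hypothesis.
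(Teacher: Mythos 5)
Your proof is correct and takes essentially the same route as the paper's: decompose the increment on $[t,t+s]$ via \eqref{eq:innovation_process}, condition on the Wiener filtration $\F_t$, freeze the $\F_t$-measurable shift $\ell=Y_t+\sigma\bar{B}^t$, and use that $\tilde{B}^t\overset{d}{=}\tilde{B}$ is independent of $\F_t$ together with the assumed density $p_s(\ell;\cdot)$, before integrating out. The additional care you take---verifying $\ell\in\Cloc^{H-}(\R_+,\R^n)$ almost surely through the identification $\bar{B}^t=\cP^H W^t$, and invoking uniqueness in law from \cref{lem:innovation_sde_well_posed} to justify the conditional freezing---only makes explicit steps the paper's proof leaves implicit.
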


\begin{proof}
  Let $f:\R^n\to\R_+$ be a Borel function and let $(\F_t)_{t\geq 0}$ be the filtration generated by the Wiener process driving $B$ through \eqref{eq:mandelbrot}. Then we can write
  \begin{equation*}
    Y_{t+s}=Y_t+\int_0^s b\big(Y_{t+r}\big)\,dr+\sigma\big(\bar{B}_s^t+\tilde{B}_s^t\big)
  \end{equation*}
  and, since $\tilde{B}^t\overset{d}{=}\tilde{B}$ for each $t\geq 0$, it follows that
  \begin{equation*}
    \Expec{f\big(Y_{t+s}\big)\,\middle|\,\F_t}=\Expec{f\big(\Phi_s(\ell)\big)}\Big|_{\ell=Y_t+\sigma\bar{B}^t}=\int_{\R^n} f(y) p_{s}\Big(Y_t+\sigma\bar{B}^t; y\Big)\,dy 
  \end{equation*}
  and the required result follows.
\end{proof}

\subsection{Uniform Gaussian Tails of the Invariant Distribution}\label{sec:tails_invariant_measure}

As a first step toward the Gaussian bounds for the stationary density $p^\lambda_\infty$ of the equation \eqref{eq:parameter_sde},  we show that, under the conditions \hyperlink{psk_loc}{$\mathbf{(PS_{\mathrm{loc}})}_{0,H}$} and \hyperlink{psk}{$\mathbf{(PS)}_{0,H}$}, $\I_{\pi^\lambda}$ has Gaussian tails (locally) uniform in $\lambda$, that is, for $\rho>0$ sufficiently small,
\begin{equation}\label{eq:gaussian_tails_invariant_measure}
  \sup_{\lambda\in K}\int_{\R^n\times\cB_H} \exp\Big(\rho\big(|z|^2+\|w\|_{\cB_H}^2\big)\Big)\,\I_{\pi^\lambda}(dz,dw)<\infty,
\end{equation}
either for $K\subset\R^d$ compact or $K=\R^d$. Such a result then of course also applies in the setting of \cref{thm:density}. 

We first recall the following quantitative version of Fernique's celebrated theorem \cite{Fernique1970}:

\begin{proposition}[{\cite[Theorem 2.8.5]{Bogachev1998}}]\label{prop:fernique}
  Let $\mu$ be a centered Gaussian measure on a separable Banach space $\big(\X,\|\cdot\|_{\X}\big)$. Set
  \begin{equation*}
    \zeta\define\inf\left\{t>0:\,\mu\big(\|x\|_\X\leq t)=\frac34\right\}.
  \end{equation*}
  Then there is a constant $C(\zeta)$ depending only on $\zeta$ such that $\zeta\mapsto C(\zeta)$ is increasing and, for each $\rho\leq\frac{1}{24\zeta^2}\log(3)$,
  \begin{equation*}
    \int_{\X}\exp\big(\rho\|x\|^2_\X\big)\,\mu(dx)\leq C(\zeta).
  \end{equation*}
\end{proposition}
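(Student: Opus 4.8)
The plan is to establish the quantitative Fernique estimate by exploiting the rotation invariance of the product measure $\mu\otimes\mu$ on $\X\times\X$ and then bootstrapping the resulting tail bound into a doubly-exponential decay. First I would record the one point where Gaussianity is used: the pushforward of $\mu\otimes\mu$ under the rotation $(x,y)\mapsto\big(\tfrac{x-y}{\sqrt2},\tfrac{x+y}{\sqrt2}\big)$ is again $\mu\otimes\mu$. This follows by comparing characteristic functionals, since for $f,g\in\X^*$ the variance of $\langle f,\tfrac{x-y}{\sqrt2}\rangle+\langle g,\tfrac{x+y}{\sqrt2}\rangle$ under $\mu\otimes\mu$ equals $B(f,f)+B(g,g)$ (the covariance cross-terms cancel), which is exactly the variance of $\langle f,x\rangle+\langle g,y\rangle$; separability of $\X$ ensures that the characteristic functional determines the measure.

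The key step is a self-improving tail inequality. Fix $0<s<t$ and write $u=\tfrac{x-y}{\sqrt2}$, $v=\tfrac{x+y}{\sqrt2}$, so that $x=\tfrac{u+v}{\sqrt2}$ and $y=\tfrac{v-u}{\sqrt2}$. The triangle inequality shows that on the event $\{\|u\|_\X\le s,\,\|v\|_\X>t\}$ both $\|x\|_\X$ and $\|y\|_\X$ exceed $\tfrac{t-s}{\sqrt2}$. Rewriting $\mu\big(\|x\|_\X\le s\big)\mu\big(\|x\|_\X>t\big)=(\mu\otimes\mu)\big(\|u\|_\X\le s,\|v\|_\X>t\big)$ via the invariance from the first step, bounding the latter by $(\mu\otimes\mu)\big(\|x\|_\X>\tfrac{t-s}{\sqrt2},\|y\|_\X>\tfrac{t-s}{\sqrt2}\big)$, and using independence, I obtain
\[
  \mu\big(\|x\|_\X\le s\big)\,\mu\big(\|x\|_\X > t\big)\le \mu\Big(\|x\|_\X > \tfrac{t-s}{\sqrt2}\Big)^{2}.
\]

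I would then iterate. Since $\mu(\|x\|_\X\le t)\to1$ by tightness, $\zeta<\infty$ and $\mu(\|x\|_\X\le\zeta)\ge\tfrac34$, whence $a_0\define\mu(\|x\|_\X>\zeta)/\mu(\|x\|_\X\le\zeta)\le\tfrac13$. Setting $t_0=\zeta$ and $t_{n+1}=\zeta+\sqrt2\,t_n$ arranges $\tfrac{t_{n+1}-\zeta}{\sqrt2}=t_n$, so the inequality above with $s=\zeta$, $t=t_{n+1}$ gives $a_{n+1}\le a_n^2$ for $a_n\define\mu(\|x\|_\X>t_n)/\mu(\|x\|_\X\le\zeta)$, hence $\mu(\|x\|_\X>t_n)\le a_0^{2^n}\le 3^{-2^n}$. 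Solving the recursion explicitly yields $t_n=\zeta\tfrac{(\sqrt2)^{n+1}-1}{\sqrt2-1}$, so $t_{n+1}^2\le 4(\sqrt2+1)^2\zeta^2\,2^n=4(3+2\sqrt2)\zeta^2\,2^n$. Decomposing $\X$ into $\{\|x\|_\X\le\zeta\}$ and the shells $\{t_n<\|x\|_\X\le t_{n+1}\}$, I bound
\[
  \int_\X \exp\big(\rho\|x\|_\X^{2}\big)\,\mu(dx)\le e^{\rho\zeta^{2}} + \sum_{n\ge0}\exp\Big(\big(4(3+2\sqrt2)\rho\zeta^{2} - \log 3\big)2^{n}\Big),
\]
and since $4(3+2\sqrt2)=12+8\sqrt2<24$, the series converges (with geometric decay of the summands) whenever $\rho\le\tfrac{1}{24\zeta^2}\log 3$, the bound being a function of $\zeta$ alone that is manifestly increasing in $\zeta$.

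The main obstacle is bookkeeping rather than conceptual: tracking the geometric constant through the recursion so as to land precisely on the clean threshold $\tfrac{1}{24\zeta^2}\log 3$. The slight slack in rounding $4(3+2\sqrt2)\approx23.3$ up to $24$ is exactly what keeps the endpoint $\rho=\tfrac{1}{24\zeta^2}\log 3$ admissible, so this estimate must be kept sharp enough. A minor point worth verifying is the measurability of $x\mapsto\|x\|_\X$ and that the level $\tfrac34$ is genuinely attained in the definition of $\zeta$; both are routine given separability of $\X$ and the fact that a centered Gaussian measure charges no proper closed affine hyperplane, so that $t\mapsto\mu(\|x\|_\X\le t)$ is continuous.
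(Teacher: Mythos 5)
The paper does not actually prove this proposition: it is imported from Bogachev [Theorem~2.8.5], i.e.\ it is Fernique's theorem in quantitative form, so there is no internal proof to compare against. Your argument is the classical Fernique rotation proof---which is also, in essence, the proof in the cited reference---and it is correct. The three pillars all check out: (i) invariance of $\mu\otimes\mu$ under $(x,y)\mapsto\bigl(\tfrac{x-y}{\sqrt2},\tfrac{x+y}{\sqrt2}\bigr)$, correctly reduced to equality of characteristic functionals (the cross-covariances cancel), with separability guaranteeing that these determine the measure; (ii) the self-improving inequality $\mu(\|x\|_\X\le s)\,\mu(\|x\|_\X>t)\le\mu\bigl(\|x\|_\X>\tfrac{t-s}{\sqrt2}\bigr)^2$; (iii) the iteration along $t_{n+1}=\zeta+\sqrt2\,t_n$, which with $\mu(\|x\|_\X\le\zeta)\ge\tfrac34$ gives $a_{n+1}\le a_n^2$ and hence $\mu(\|x\|_\X>t_n)\le 3^{-2^n}$. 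The bookkeeping is also right: $t_n=\zeta\bigl((\sqrt2)^{n+1}-1\bigr)/(\sqrt2-1)$, so $t_{n+1}^2\le 4(\sqrt2+1)^2\zeta^2 2^n=4(3+2\sqrt2)\zeta^2 2^n$, and since $4(3+2\sqrt2)=12+8\sqrt2<24$ the exponent $\bigl(4(3+2\sqrt2)\rho\zeta^2-\log 3\bigr)$ stays negative at the endpoint $\rho=\tfrac{1}{24\zeta^2}\log 3$, with margin $\tfrac{(\sqrt2-1)^2}{6}\log 3$, so the shell sum converges geometrically. Your final bound $3^{1/24}+\sum_{n\ge0}\exp\bigl(-\tfrac{(\sqrt2-1)^2}{6}\log 3\cdot 2^n\bigr)$ is in fact \emph{uniform} in $\zeta$, which is stronger than the increasing-in-$\zeta$ constant the statement asks for (and stronger than what the paper needs, e.g.\ in \cref{lem:norm_fou_tail}, where $\zeta$ is only controlled uniformly in $T$).

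One small blemish, in a side remark rather than in the proof proper: continuity of $t\mapsto\mu(\|x\|_\X\le t)$ does not follow from Gaussian measures charging no proper closed hyperplane---spheres are not hyperplanes. The correct justification is Tsirelson's theorem: the law of a measurable (semi)norm under a Gaussian measure is atomless except possibly at the infimum of its support, which here is $0$ and carries no mass unless $\mu=\delta_0$. In any case this point is immaterial for the validity of your argument: the iteration only uses $\mu(\|x\|_\X\le\zeta)\ge\tfrac34$, which follows from right-continuity of the distribution function whenever the set $\bigl\{t>0:\mu(\|x\|_\X\le t)=\tfrac34\bigr\}$ is nonempty; and if that set is empty, then $\zeta=\infty$ and the proposition is vacuous (respectively trivial when $\mu=\delta_0$).
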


The proof of \eqref{eq:gaussian_tails_invariant_measure} is obtained by comparing the solution to \eqref{eq:sde} to the fractional Ornstein-Uhlenbeck process (fOU) with zero initial condition \cite{Cheridito2003}:
\begin{equation}\label{eq:fou_sde}
  dZ_t=-Z_t\,dt+\sigma\,dB_t, \qquad Z_0=0.
\end{equation} 
For $\kappa,T>0$ we define
\begin{equation*}
  \cN_{\kappa}\big([0,T],\R^n\big)=\left\{f:[0,T]\to\R^n:\,\|f\|_{\cN_{\kappa}}=\sqrt{\int_0^Te^{-\kappa(T-s)}\big|f(s)\big|^2\,ds}<\infty\right\}.
\end{equation*}
Notice that $e^{-\frac{\kappa}{2} T}\|\cdot\|_{L^2}\leq \|\cdot\|_{\cN_\kappa}\leq\|\cdot\|_{L^2}$, so that $\cN_\kappa\big([0,T],\R^n\big)$ is \emph{a fortiori} a separable Banach space.
\begin{lemma}\label{lem:norm_fou_tail}
For each $\kappa>0$ there are $c,C>0$ such that
  \begin{equation*}
    \sup_{T\geq 0}\prob\big(\|Z\|_{\cN_\kappa([0,T])}>|z|\big)\leq C e^{-c|z|^2}\qquad\forall\, z\in\R^n.
  \end{equation*}
\end{lemma}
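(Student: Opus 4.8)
The plan is to exploit that, for each fixed $T$, the process $Z$ is a centered Gaussian random element of the separable Banach space $\cN_\kappa([0,T],\R^n)$ and to apply the quantitative Fernique estimate of \cref{prop:fernique} \emph{uniformly} in $T$. Indeed, $Z$ depends linearly on the driving fBm, has continuous (hence square-integrable) paths, and $\|\cdot\|_{\cN_\kappa}$ is equivalent to the $L^2$-norm; thus $\L(Z)$ is a centered Gaussian measure on $\cN_\kappa([0,T],\R^n)$. Once I produce a Gaussian exponential-moment bound $\sup_{T\ge 0}\Expec{\exp(\rho\|Z\|_{\cN_\kappa([0,T])}^2)}\le C$ for some $\rho,C>0$, the claimed tail estimate follows immediately from the Chernoff bound $\prob\big(\|Z\|_{\cN_\kappa([0,T])}>|z|\big)\le e^{-\rho|z|^2}\Expec{\exp(\rho\|Z\|_{\cN_\kappa([0,T])}^2)}$, with $c=\rho$ and $C$ the uniform constant.

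By \cref{prop:fernique} the exponential-moment bound holds with $\rho$ and $C$ controlled by the quantile parameter $\zeta_T\define\inf\{t>0:\prob(\|Z\|_{\cN_\kappa([0,T])}\le t)=\tfrac34\}$, and since $\zeta\mapsto C(\zeta)$ is increasing while $\rho$ may be taken as $\tfrac{\log 3}{24\zeta^2}$, it suffices to bound $\zeta_T$ uniformly in $T$. The key reduction is that, by Markov's inequality,
\begin{equation*}
  \prob\big(\|Z\|_{\cN_\kappa([0,T])}>t\big)\le t^{-2}\,\Expec{\|Z\|_{\cN_\kappa([0,T])}^2}=t^{-2}\int_0^T e^{-\kappa(T-s)}\Expec{|Z_s|^2}\,ds\le \frac{1}{\kappa t^2}\sup_{s\ge 0}\Expec{|Z_s|^2},
\end{equation*}
so that $\zeta_T\le 2\big(\kappa^{-1}\sup_{s\ge0}\Expec{|Z_s|^2}\big)^{1/2}=:\zeta_*$ for every $T$, provided the supremum is finite.

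The crux of the argument is therefore the uniform second-moment bound $M\define\sup_{s\ge0}\Expec{|Z_s|^2}<\infty$, and this is the step I expect to be the main obstacle. I would obtain it by comparison with the \emph{stationary} fractional Ornstein--Uhlenbeck process $\bar Z_t=\int_{-\infty}^t e^{-(t-u)}\sigma\,dB_u$ (the integrals being Wiener integrals against fBm), whose variance $v_\infty\define\Expec{|\bar Z_t|^2}$ is finite and constant in $t$ by \cite{Cheridito2003}. Since the solution of \eqref{eq:fou_sde} started at $0$ is $Z_t=\int_0^t e^{-(t-u)}\sigma\,dB_u$, subtracting the contribution of the past and using $\bar Z_0=\int_{-\infty}^0 e^{u}\sigma\,dB_u$ gives $Z_t=\bar Z_t-e^{-t}\bar Z_0$, whence $\Expec{|Z_t|^2}\le 2\Expec{|\bar Z_t|^2}+2e^{-2t}\Expec{|\bar Z_0|^2}\le 4v_\infty$ for all $t\ge0$, so $M\le 4v_\infty<\infty$.

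Finally, with $\zeta_*$ now a finite, $T$-independent bound, I would fix $\rho_*\define\tfrac{\log 3}{24\zeta_*^2}$; since $\zeta_T\le\zeta_*$ forces $\rho_*\le\tfrac{\log 3}{24\zeta_T^2}$, \cref{prop:fernique} together with the monotonicity of $C(\cdot)$ yields $\Expec{\exp(\rho_*\|Z\|_{\cN_\kappa([0,T])}^2)}\le C(\zeta_T)\le C(\zeta_*)$ for every $T\ge 0$. Combined with the Chernoff bound of the first paragraph, this proves the lemma with $c=\rho_*$ and $C=C(\zeta_*)$. Everything apart from the uniform variance bound $M<\infty$ is a routine packaging of Fernique's theorem, and the degenerate case $T=0$ is trivial since then $\|Z\|_{\cN_\kappa([0,0])}=0$.
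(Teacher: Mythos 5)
Your proposal is correct and takes essentially the same route as the paper: view $(Z_t)_{t\in[0,T]}$ as a centered Gaussian measure on the separable Banach space $\cN_\kappa\big([0,T],\R^n\big)$, bound $\Expec{\|Z\|_{\cN_\kappa([0,T])}^2}\leq \kappa^{-1}\sup_{t\geq 0}\Expec{|Z_t|^2}$ uniformly in $T$, and feed a $T$-independent quantile bound into the quantitative Fernique estimate of \cref{prop:fernique}. The only (inessential) difference is the sub-step giving $\sup_{t\geq 0}\Expec{|Z_t|^2}<\infty$: the paper computes the covariance $\Sigma_t=2He^{-t}\sigma\sigma^{\top}\int_0^t s^{2H-1}\cosh(t-s)\,ds$ explicitly and notes it is bounded, whereas you compare with the stationary fOU process via the decomposition $Z_t=\bar Z_t-e^{-t}\bar Z_0$; both arguments are valid.
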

\begin{proof}
  The solution of \eqref{eq:fou_sde} is given by 
  \begin{equation*}
    Z_t=\sigma\int_0^t e^{-(t-s)}\,dB_s, \qquad t\geq 0,
  \end{equation*}
  where the integration is in the Paley-Wiener sense (or in this case equivalently as a Young integral). One checks that $Z_t\sim N(0,\Sigma_t)$, where
  \begin{equation*}
    \Sigma_t\define 2H e^{-t}\sigma\sigma^{\top}\int_0^t s^{2H-1}\cosh(t-s)\,ds.
  \end{equation*}
  In particular, we observe that $\sup_{t\geq 0}|\Sigma_t|<\infty$. Consequently, there are $c,C>0$ such that
  \begin{equation}\label{eq:fou_tail}
    \sup_{t\geq 0}\prob\big(|Z_t|>|z|\big)\leq C e^{-c|z|^2}\qquad\forall\,z\in\R^n.
  \end{equation}
  It is also clear that the sample paths of $Z_t$ lie in $\cN_\kappa\big([0,T],\R^n\big)$ for each $\kappa>0$:
    \begin{equation*}
      \Expec{\|Z\|_{\cN_\kappa([0,T])}^2}=\int_0^T e^{-\kappa(T-s)}\Expec{|Z_s|^2}\,ds\leq\frac{1}{\kappa}\sup_{t\geq 0}\Expec{|Z_t|^2}, 
    \end{equation*}
  whence for each $\kappa,T>0$, the law of $(Z_t)_{t\in [0,T]}$ defines a centered Gaussian measure on the separable Banach space $\cN_\kappa\big([0,T],\R^n\big)$. Since $\sup_{T\geq 0}\Expec{\|Z\|_{\cN_\kappa([0,T])}}<\infty$ the required tail estimate follows from \Cref{prop:fernique}.
\end{proof}

As a next step, we show that the distance of the solution to \eqref{eq:sde} and the fOU can be pathwise controlled:

\begin{lemma}\label{lem:solution_stability}
  Let $H\in(0,1)$ and $(Y_t^\lambda)_{t\geq 0}$ be the solution to \eqref{eq:parameter_sde} started from the generalized initial condition $\delta_0\otimes\sW$. Assume that the drift $b$ satisfies \hyperlink{psk_loc}{$\mathbf{(PS_{\mathrm{loc}})}_{0,H}$}. Then, for each $K\subset\R^d$ compact, there is a $C_K>0$ such that
  \begin{equation*}
    \sup_{\lambda\in K}\big|Y_T^\lambda-Z_T\big|\leq C_K\big(1+\|Z\|_{\cN_\kappa([0,T])}\big) \qquad\forall\,T\geq 0.
  \end{equation*}
  If $b$ satisfies even \hyperlink{psk}{$\mathbf{(PS)}_{0,H}$}, then the constant $C_K>0$ can be chosen uniformly over $\lambda\in\R^d$.
\end{lemma}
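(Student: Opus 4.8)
The plan is to exploit that $Y^\lambda$ and the fractional Ornstein--Uhlenbeck process $Z$ from \eqref{eq:fou_sde} are driven by the \emph{same} noise, so that their difference solves a genuine ODE on which the contraction hypothesis can be turned into a one-sided (monotonicity) estimate. Set $D_t^\lambda\define Y_t^\lambda-Z_t$. Subtracting \eqref{eq:fou_sde} from \eqref{eq:parameter_sde}, the stochastic term $\sigma\,dB_t$ cancels, so $D^\lambda$ is absolutely continuous with
\begin{equation*}
  \dot D_t^\lambda = b(\lambda, Y_t^\lambda)+Z_t,\qquad D_0^\lambda=0.
\end{equation*}
Consequently $t\mapsto|D_t^\lambda|^2$ is classically differentiable, and the whole argument can be carried out by ordinary calculus, with no recourse to the pathwise theory for the noise.

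Next I would run an energy estimate. Differentiating $\tfrac12|D_t^\lambda|^2$ and inserting $b(\lambda,Z_t)$,
\begin{equation*}
  \tfrac12\tfrac{d}{dt}|D_t^\lambda|^2=\big\langle Y_t^\lambda-Z_t,\,b(\lambda,Y_t^\lambda)-b(\lambda,Z_t)\big\rangle+\big\langle D_t^\lambda,\,b(\lambda,Z_t)+Z_t\big\rangle.
\end{equation*}
The first term is at most $C-\kappa|D_t^\lambda|^2$ by the off-diagonal contraction \eqref{eq:off_diagonal_contractive_lambda} applied with $y=Y_t^\lambda$, $z=Z_t$. For the second term the uniform Lipschitz bound \eqref{eq:local_uniform_lipschitz} gives $|b(\lambda,Z_t)+Z_t|\leq|b(\lambda,0)|+(C+1)|Z_t|$, where $\sup_{\lambda\in K}|b(\lambda,0)|<\infty$ since $b(\cdot,0)$ is continuous on the compact set $K$. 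A Young inequality then absorbs a $\tfrac{\kappa}{2}|D_t^\lambda|^2$ and produces
\begin{equation*}
  \tfrac{d}{dt}|D_t^\lambda|^2\leq-\kappa|D_t^\lambda|^2+C_K\big(1+|Z_t|^2\big),
\end{equation*}
with $C_K$ depending only on $C$, $\kappa$, and $\sup_{\lambda\in K}|b(\lambda,0)|$, hence uniform in $\lambda\in K$.

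Finally I would integrate this linear differential inequality via the variation-of-constants (Gronwall) formula. Since $D_0^\lambda=0$,
\begin{equation*}
  |D_T^\lambda|^2\leq C_K\int_0^T e^{-\kappa(T-s)}\big(1+|Z_s|^2\big)\,ds\leq\tfrac{C_K}{\kappa}+C_K\,\|Z\|_{\cN_\kappa([0,T])}^2,
\end{equation*}
where the last integral is \emph{exactly} the squared norm defining $\cN_\kappa\big([0,T],\R^n\big)$. Taking square roots yields $\sup_{\lambda\in K}|Y_T^\lambda-Z_T|\leq C_K\big(1+\|Z\|_{\cN_\kappa([0,T])}\big)$ for every $T\geq 0$. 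The decisive design choice---and the crux of obtaining a bound \emph{uniform in $T\geq 0$} rather than merely on a fixed horizon, which I regard as the main obstacle---is to match the decay rate of the weighted norm $\cN_\kappa$ to the contraction rate $\kappa$: the weight $e^{-\kappa(T-s)}$ generated by Gronwall then coincides with the weight in $\|\cdot\|_{\cN_\kappa}$, so no factor growing in $T$ survives and the supremum over $T$ stays finite. For the global statement under \hyperlink{psk}{$\mathbf{(PS)}_{0,H}$}, every constant entering the estimate ($C$, $\kappa$, and the bound on $b(\cdot,0)$) is uniform in $\lambda\in\R^d$ by the uniformity built into the hypothesis, so the identical computation delivers a $C_K$ independent of $\lambda\in\R^d$.
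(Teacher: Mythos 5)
Your proof is correct and follows essentially the same route as the paper's: the paper likewise differentiates $|Y_t^\lambda-Z_t|^2$, splits off $\big\langle b(\lambda,Y_t^\lambda)-b(\lambda,Z_t),Y_t^\lambda-Z_t\big\rangle$ to invoke \eqref{eq:off_diagonal_contractive_lambda}, absorbs the cross term by Young's inequality, and integrates with the factor $e^{\kappa t}$ so that the resulting weight $e^{-\kappa(t-s)}$ matches the $\cN_\kappa$ norm exactly. If anything, your treatment of the cross term (bounding $|b(\lambda,Z_t)|$ via \eqref{eq:local_uniform_lipschitz} and $\sup_{\lambda\in K}|b(\lambda,0)|<\infty$) is more explicit than the paper's, which writes this step somewhat loosely.
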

\begin{proof}
  Fix $K\subset\R^d$ compact and assume \hyperlink{psk_loc}{$\mathbf{(PS_{\mathrm{loc}})}_{0,H}$}. By Young's inequality we have
  \begin{align}
    \frac{d}{dt}\big|Y_t^\lambda-Z_t\big|^2&=2\big<b(\lambda, Y_t^{\lambda})+Z_t,Y_t^\lambda-Z_t\big> \nonumber\\
    &\leq 2\left(C-\kappa\big|Y_t^\lambda-Z_t\big|^2+\big|b(\lambda,Z_t)+Z_t\big|\big|Y_t^\lambda-Z_t\big|\right) \label{eq:stability_derivative}\\
    &\leq 2C-\kappa\big|Y_t^\lambda-Z_t\big|^2+\frac{2}{\kappa}|Z_t|^2\qquad\forall\,\lambda\in K. \nonumber
  \end{align}
  Hence, the function $f(t)\define e^{\kappa t}\big|Y_t^\lambda-Z_t\big|^2$ satisfies the differential inequality
  \begin{equation*}
    \frac{d}{dt}f(t)\leq e^{\kappa t}\left(2C+\frac{2}{\kappa}|Z_t|^2\right),
  \end{equation*}
  so that
  \begin{equation*}
    \sup_{\lambda\in K}\big|Y_t^\lambda-Z_t\big|^2\lesssim \int_0^t e^{-\kappa(t-s)}\big(1+|Z_s|^2\big)\,ds\qquad\forall\,t\in[0,T].
  \end{equation*}
  The claimed estimate follows at once. 

  The argument in case $b$ satisfies \hyperlink{psk}{$\mathbf{(PS)}_{0,H}$} is similar. 
\end{proof}

Finally we can prove \eqref{eq:gaussian_tails_invariant_measure}:
\begin{corollary}\label{cor:invariant_measure}
  Let $H\in(0,1)$ and consider \eqref{eq:parameter_sde} with a drift falling in the regime of \hyperlink{psk_loc}{$\mathbf{(PS_{\mathrm{loc}})}_{0,H}$}. Then, for each $K\subset\R^d$, there is a $\rho>0$ such that the invariant measure satisfies
  \begin{equation*}
    \sup_{\lambda\in K}\int_{\R^n\times\cB_H} \exp\Big(\rho\big(|z|^2+\|w\|_{\cB_H}^2\big)\Big)\,\I_{\pi^\lambda}(dz,dw)<\infty.
  \end{equation*}
  If $b$ satisfies \hyperlink{psk}{$\mathbf{(PS)}_{0,H}$}, then $\rho$ can be chosen uniform over $\lambda\in\R^d$.
\end{corollary}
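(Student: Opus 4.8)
The plan is to reduce the joint exponential integral to the two marginals of $\I_{\pi^\lambda}$ and to treat each separately. Since $\I_{\pi^\lambda}$ is an invariant measure, hence a generalized initial condition, its $\cB_H$-marginal is the Wiener measure $\sW$, while its $\R^n$-marginal is $\pi^\lambda$. By the Cauchy--Schwarz inequality,
\[
  \int \exp\bigl(\rho(|z|^2+\|w\|_{\cB_H}^2)\bigr)\,\I_{\pi^\lambda}(dz,dw)\leq \Bigl(\int e^{2\rho|z|^2}\,\pi^\lambda(dz)\Bigr)^{1/2}\Bigl(\int e^{2\rho\|w\|_{\cB_H}^2}\,\sW(dw)\Bigr)^{1/2}.
\]
The second factor is finite and independent of $\lambda$ for $\rho$ small, directly by Fernique's theorem (\cref{prop:fernique}) applied to the centered Gaussian measure $\sW$ on the separable Banach space $\cB_H$. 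Thus everything reduces to a uniform Gaussian tail bound on the spatial marginal $\pi^\lambda$.

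For that marginal I would first establish a uniform-in-time exponential moment bound for the solution started from $\delta_0\otimes\sW$, namely $\sup_{\lambda\in K}\sup_{T\geq 0}\Expec{\exp(c|Y_T^\lambda|^2)}<\infty$ for $c$ small. To this end I combine \cref{lem:solution_stability}, which gives $\sup_{\lambda\in K}|Y_T^\lambda|\leq |Z_T|+C_K(1+\|Z\|_{\cN_\kappa([0,T])})$, with the elementary inequality $(a+b)^2\leq 2a^2+2b^2$ to obtain
\[
  \exp\bigl(c|Y_T^\lambda|^2\bigr)\leq e^{4cC_K^2}\exp\bigl(2c|Z_T|^2\bigr)\exp\bigl(4cC_K^2\|Z\|_{\cN_\kappa([0,T])}^2\bigr).
\]
A further application of Cauchy--Schwarz separates the two factors, each of which is controlled by converting the corresponding Gaussian tail bound into an exponential moment: the bound \eqref{eq:fou_tail} on $|Z_T|$ yields $\sup_T\Expec{e^{4c|Z_T|^2}}<\infty$, while \cref{lem:norm_fou_tail} yields $\sup_T\Expec{e^{8cC_K^2\|Z\|_{\cN_\kappa}^2}}<\infty$, both provided $c$ is chosen smaller than the respective tail constants. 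Here the dependence of $C_K$ on $K$ dictates how small $c$ must be taken.

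It then remains to transfer this uniform moment bound from the finite-time laws to the invariant measure. By \cref{prop:tv_convergence_invariant_measure}, $\L(Y_T^\lambda)\to\pi^\lambda$ in total variation as $T\to\infty$ (the initial marginal $\delta_0$ trivially has finite first moment). Testing against the bounded measurable truncations $g_M(y)=\min(e^{c|y|^2},M)$ and using total-variation convergence gives $\int g_M\,d\pi^\lambda\leq \sup_T\Expec{e^{c|Y_T^\lambda|^2}}$ for every $M$; letting $M\to\infty$ by monotone convergence produces $\int e^{c|z|^2}\,\pi^\lambda(dz)\leq\sup_T\Expec{e^{c|Y_T^\lambda|^2}}$, uniformly in $\lambda\in K$. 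Choosing $\rho\leq c/2$ then closes the Cauchy--Schwarz estimate in the first display. For the final assertion under $\mathbf{(PS)}_{0,H}$, the only change is that the constant $C_K$ in \cref{lem:solution_stability} may be taken independent of $\lambda\in\R^d$, so the same $c$, and hence $\rho$, works uniformly over all of $\R^d$. The main subtlety — rather than any single hard estimate — lies in this limit passage, since total-variation convergence controls only bounded test functions and must be coupled with the uniform exponential integrability above in order to reach the unbounded weight $e^{c|z|^2}$.
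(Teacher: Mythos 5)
Your proof is correct and follows essentially the same route as the paper's: both reduce the joint integral to the two marginals (you via Cauchy--Schwarz, the paper via Young's inequality), handle the $\sW$-marginal by Fernique's theorem, and control the $\pi^\lambda$-marginal by comparing $Y^\lambda$ with the fractional Ornstein--Uhlenbeck process through \cref{lem:solution_stability} and \cref{lem:norm_fou_tail}, before transferring the estimate to $\pi^\lambda$ via the total-variation convergence of \cref{prop:tv_convergence_invariant_measure}. The only difference is cosmetic and lies in the order of operations: the paper carries the uniform Gaussian \emph{tail} bound \eqref{eq:uniform_gaussian} through the TV limit and converts it to an exponential moment at the very end, whereas you convert tails to exponential moments first and pass those through the limit by truncation and monotone convergence---both are valid.
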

\begin{proof}
  Fix $K\subset\R^d$ compact and assume \hyperlink{psk_loc}{$\mathbf{(PS_{\mathrm{loc}})}_{0,H}$}. By Young's inequality, it is enough to show that, for a positive number $\rho$, 
  \begin{equation*}
    \sup_{\lambda\in K}\int_{\R^n} e^{\rho|z|^2}\,\pi^\lambda(dz)+\int_{\cB_H} e^{\rho\|w\|_{\cB_H}^2}\,\sW(dw)<\infty.
  \end{equation*}
  The fact that the second integral is finite for sufficiently small $\rho$ is a direct consequence of Fernique's theorem and standard properties of the Wiener measure, see \emph{e.g.} \cite[Lemma 3.8]{Hairer2005} for details. To bound the first integral we claim that, for some constants  $c,C>0$,
  \begin{equation}\label{eq:uniform_gaussian}
    \sup_{\substack{t\geq 0\\ \lambda\in K}}\prob\big(|Y_t^\lambda|>|z|\big)\leq C e^{-c|z|^2} \qquad\forall\,z\in\R^n,
  \end{equation}
  where $Y^\lambda$ solves \eqref{eq:parameter_sde} with generalized initial condition $\delta_0\otimes\sW$. By \cref{prop:tv_convergence_invariant_measure} we know that $\L(Y_t^\lambda)\to\pi^\lambda$ as $t\to\infty$ in total variation distance. Hence, we \emph{a forteriori} have
  \begin{equation*}
    \sup_{\lambda\in K}\pi^\lambda\big(\{y\in\R^n:\,|y|>|z|\}\big)=\sup_{\lambda\in K}\lim_{t\to\infty}\prob\big(|Y_t^\lambda|>|z|\big)\leq C e^{-c|z|^2}\qquad\forall\,z\in\R^n
  \end{equation*}
  from \eqref{eq:uniform_gaussian}.
  It follows that
  \begin{equation*}
    \int_{\R^n}e^{\rho|z|^2}\,\pi^\lambda(dz)\leq C\left(1+\int_1^\infty\pi^\lambda\left(\left\{y\in\R^n:\,|y|>\rho^{-\frac12}\sqrt{\log s}\right\}\right)\,ds\right)<\infty
  \end{equation*}
  for $\rho<c$. 

  To see \eqref{eq:uniform_gaussian}, we split
  \begin{equation*}
    \prob\big(|Y_t^\lambda|>|z|\big)\leq\prob\left(\big|Y_t^\lambda-Z_t\big|>\frac{|z|}{2}\right)+\prob\left(|Z_t|>\frac{|z|}{2}\right).
  \end{equation*}
  We have already seen in \eqref{eq:fou_tail} that the fractional Ornstein-Uhlenbeck process has a Gaussian tail uniformly in time. By \cref{lem:solution_stability}, we obtain
  \begin{equation*}
    \sup_{\lambda\in K}\prob\left(\big|Y_t^\lambda-Z_t\big|>\frac{|z|}{2}\right)\leq\prob\Big(\|Z\|_{\cN_\kappa([0,t])}>C|z|-1\Big)
  \end{equation*}
  and the right-hand side is finally bounded by \cref{lem:norm_fou_tail}. This proves the corollary in the locally uniform case; if $b$ satisfies \hyperlink{psk}{$\mathbf{(PS)}_{0,H}$} the argument is similar.
\end{proof}

\subsection{Fractional Integrals and Derivatives}

Let $\alpha\in(0,1)$ and $T>0$. The \emph{(right-sided) Riemann-Liouville fractional integral} of order $\alpha$ is the operator $\cI_{+}^\alpha:L^1 \big([0,T],\R^n\big)\to L^1\big([0,T],\R^n\big)$ defined by
\begin{equation}\label{eq:fractional_integral}
  \cI_+^\alpha f(t)\define\frac{1}{\Gamma(\alpha)}\int_0^t (t-s)^{\alpha-1}f(s)\,ds,\qquad t\in[0,T],
\end{equation}
where $\Gamma(\alpha)=\int_0^{\infty} t^{\alpha-1} e^{-t}\,dt$ is Euler's gamma function. The name `fractional integral' is motivated by the mapping properties of the above operators on the spaces of H\"older continuous functions:

\begin{lemma}[{\cite[\para 3.1, Corollary 1]{Samko1993}}]\label{lem:holder_fractional_integral}
  Let $\alpha\in(0,1)$, $\beta\in[0,1]$, and $T>0$. Then, if $\alpha+\beta\neq 1$, there is a constant $C_{\alpha,\beta}>0$ such that
  \begin{equation*}
    \big\|\cI_+^{\alpha} f\big\|_{\C^{\alpha+\beta}}\leq C_{\alpha,\beta}\vertiii{f}_{\C^\beta}\qquad\forall\,f\in\C^\beta\big([0,T],\R^n\big).
  \end{equation*}
\end{lemma}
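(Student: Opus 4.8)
The plan is to reduce the claim to a single quantitative estimate on the increments of $g\define\cI_+^\alpha f$ and to establish it by the classical Hardy--Littlewood splitting of the Riemann--Liouville kernel. The sup-norm bound is immediate: since $\big|g(t)\big|\leq\frac{\|f\|_\infty}{\Gamma(\alpha)}\int_0^t u^{\alpha-1}\,du=\frac{t^\alpha}{\Gamma(\alpha+1)}\|f\|_\infty$, one controls $\|g\|_\infty$ by $T^\alpha\,\vertiii{f}_{\C^\beta}$. The entire content therefore lies in estimating $g(t)-g(\tau)$ for $0\leq\tau<t\leq T$ in terms of $h\define t-\tau$, and I would first treat the subcritical case $\alpha+\beta<1$ (where $\C^{\alpha+\beta}$ is a genuine H\"older space of order $<1$).

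Here I would write $\Gamma(\alpha)\big(g(t)-g(\tau)\big)=I_1+I_2$, where
\begin{equation*}
  I_1=\int_\tau^t (t-s)^{\alpha-1}f(s)\,ds,\qquad I_2=\int_0^\tau\Big[(t-s)^{\alpha-1}-(\tau-s)^{\alpha-1}\Big]f(s)\,ds,
\end{equation*}
and subtract $f(\tau)$ in each integrand to exploit $\big|f(s)-f(\tau)\big|\leq\|f\|_{\C^\beta}|s-\tau|^\beta$. After substituting $r=s-\tau$ (resp. $r=\tau-s$), the two resulting oscillation integrals become, up to constants, $\|f\|_{\C^\beta}$ times $\int_0^h(h-r)^{\alpha-1}r^\beta\,dr$ and $\int_0^\tau\big[r^{\alpha-1}-(r+h)^{\alpha-1}\big]r^\beta\,dr$. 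Both scale as $h^{\alpha+\beta}$: the first is a convergent Beta integral $\mathrm{B}(\alpha,\beta+1)$, and the second is dominated by $h^{\alpha+\beta}\int_0^\infty\big[\rho^{\alpha-1}-(\rho+1)^{\alpha-1}\big]\rho^\beta\,d\rho$, whose finiteness \emph{at infinity} is precisely the requirement $\alpha+\beta<1$. This is exactly where the hypothesis $\alpha+\beta\neq1$ is used; at $\alpha+\beta=1$ the tail diverges logarithmically.

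The remaining leftover constant piece is where I expect the real difficulty to lie. Collecting the $f(\tau)$-terms from $I_1$ and $I_2$, they telescope to $\frac{f(\tau)}{\Gamma(\alpha+1)}\big(t^\alpha-\tau^\alpha\big)$, i.e. the increment of $\cI_+^\alpha$ applied to the \emph{constant} $f(\tau)$. Since $\cI_+^\alpha$ maps a nonzero constant to a multiple of $t^\alpha$, which is only $\alpha$-H\"older near the origin, this endpoint term is the genuine obstruction: the estimate cannot hold on the full interval unless one accounts for the value at $0$. I would resolve it by peeling off $f(0)$ and working with $f-f(0)$, for which $\big|f(\tau)-f(0)\big|\leq\|f\|_{\C^\beta}\tau^\beta$; the elementary inequality $\tau^\beta\big(t^\alpha-\tau^\alpha\big)\lesssim h^{\alpha+\beta}$ (checked by distinguishing $\tau\leq h$ from $\tau>h$ and, in the latter, using the mean value theorem together with $\alpha+\beta<1$) then closes the subcritical estimate, the explicit rank-one term $\frac{f(0)}{\Gamma(\alpha+1)}t^\alpha$ being handled separately in the target norm.

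Finally, in the supercritical range $1<\alpha+\beta<2$ the space $\C^{\alpha+\beta}$ is a $C^1$-H\"older space, so I would instead control $g'$ and reduce to the subcritical step. Setting $\tilde f\define f-f(0)$ (so $\tilde f(0)=0$ and $\tilde f\in\C^\beta$ with $\beta>1-\alpha$), one has $g'=\frac{d}{dt}\cI_+^\alpha\tilde f=D_+^{1-\alpha}\tilde f$, the Marchaud fractional derivative of order $1-\alpha<\beta$. Its Marchaud representation, schematically $D_+^{1-\alpha}\tilde f(t)\sim\int_0^t\frac{\tilde f(t)-\tilde f(s)}{(t-s)^{2-\alpha}}\,ds+\frac{\tilde f(t)}{t^{1-\alpha}}$, is estimated by exactly the same H\"older-difference bookkeeping and endpoint argument as above and lands in $\C^{\alpha+\beta-1}$, which is precisely the required regularity of $g'$. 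Assembling the sup-norm, seminorm, and (where relevant) derivative bounds yields the stated inequality with a constant $C_{\alpha,\beta}$ depending only on $\alpha,\beta$ (and $T$).
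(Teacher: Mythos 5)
The paper itself contains no proof of this lemma: it is imported verbatim from Samko--Kilbas--Marichev \cite[\para 3.1]{Samko1993}, so your attempt has to be measured against the classical Hardy--Littlewood argument behind that citation --- which is exactly the splitting you use. Your handling of the two oscillation integrals is correct: the Beta-integral bound for $I_1$, the scaling $r=h\rho$ in $I_2$, the identification of the convergence at infinity of $\int_0^\infty\big[\rho^{\alpha-1}-(\rho+1)^{\alpha-1}\big]\rho^\beta\,d\rho$ as the precise place where $\alpha+\beta<1$ enters, and the elementary inequality $\tau^\beta\big(t^\alpha-\tau^\alpha\big)\lesssim h^{\alpha+\beta}$ are all as in the standard proof. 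You also correctly isolated the telescoped term $\frac{f(\tau)}{\Gamma(\alpha+1)}\big(t^\alpha-\tau^\alpha\big)$ as the real obstruction.

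The gap is your very last claim, that the rank-one term $\frac{f(0)}{\Gamma(\alpha+1)}t^\alpha$ can be ``handled separately in the target norm.'' It cannot: for $\beta>0$ the function $t\mapsto t^\alpha$ has \emph{infinite} $\C^{\alpha+\beta}$-seminorm on $[0,T]$, since its increment over $[0,h]$ divided by $h^{\alpha+\beta}$ is $h^{-\beta}/\Gamma(\alpha+1)\to\infty$ as $h\downarrow 0$. What your argument actually establishes is the estimate for $\cI_+^\alpha f-\frac{f(0)}{\Gamma(\alpha+1)}t^\alpha$, equivalently the stated bound for $f$ with $f(0)=0$; the lemma as literally written is \emph{false} for every $\beta\in(0,1]$, with $f\equiv 1$ as a counterexample ($\vertiii{f}_{\C^\beta}=1$ while $\cI_+^\alpha f=t^\alpha/\Gamma(\alpha+1)\notin\C^{\alpha+\beta}$), and only for $\beta=0$ is the unrestricted statement true, as your $I_1,I_2$ bounds with $\|f\|_\infty$ alone show. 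This defect is inherited from the transcription of the citation rather than from your strategy: in \cite{Samko1993} the Hardy--Littlewood theorem is formulated on the spaces $H_0^\lambda$ of H\"older functions vanishing at the left endpoint (equivalently, with the explicit power term $\frac{f(0)}{\Gamma(1+\alpha)}(x-a)^\alpha$ split off), a hypothesis the lemma above silently drops. So the correct conclusion of your own computation is the corrected statement --- for $f(0)=0$, including your Marchaud-derivative reduction in the range $1<\alpha+\beta<2$ applied to $f-f(0)$ --- not the statement as posed; no proof of the latter can exist, and the same caveat propagates to \cref{lem:holder_fractional_derivative}, where a nonzero $f(0)$ produces a $t^{-\alpha}$ singularity at the origin.
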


The next result is a simple special case of a result of Hardy and Littlewood \cite{Hardy1928}, see also \cite[\para 3, Theorem 3.5]{Samko1993}.  

\begin{lemma}\label{lem:hardy_littlewood}
  Let $\alpha\in(0,1)$. The operator $\cI_{+}^\alpha: L^2\big([0,T],\R^n\big)\to L^2\big([0,T],\R^n\big)$ is bounded.
\end{lemma}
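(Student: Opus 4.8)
The plan is to recognize $\cI_+^\alpha$ as a convolution operator and invoke Young's inequality. First I would extend any $f\in L^2\big([0,T],\R^n\big)$ to all of $\R$ by setting it equal to zero outside $[0,T]$, and introduce the kernel $k(u)\define\frac{1}{\Gamma(\alpha)}u^{\alpha-1}\1_{(0,T]}(u)$. A direct check of the support constraints (for $t\in[0,T]$ and $f$ supported in $[0,T]$, the condition $t-s\in(0,T]$ forces $s\in[0,t)$) shows that
\[
  \cI_+^\alpha f(t)=\int_\R k(t-s)f(s)\,ds=(k*f)(t),\qquad t\in[0,T].
\]

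The crucial observation is that $k\in L^1(\R)$. Since $\alpha\in(0,1)$ we have $\alpha-1>-1$, so the singularity of $u^{\alpha-1}$ at the origin is integrable, and
\[
  \|k\|_{L^1(\R)}=\frac{1}{\Gamma(\alpha)}\int_0^T u^{\alpha-1}\,du=\frac{T^\alpha}{\Gamma(\alpha+1)}<\infty.
\]
Young's convolution inequality---in its vector-valued form, which follows from Minkowski's integral inequality applied to the Euclidean norm $|\cdot|$---then yields $\|k*f\|_{L^2(\R)}\leq\|k\|_{L^1(\R)}\|f\|_{L^2(\R)}$. Restricting the left-hand side back to $[0,T]$ can only decrease the norm, whence
\[
  \big\|\cI_+^\alpha f\big\|_{L^2([0,T])}\leq\frac{T^\alpha}{\Gamma(\alpha+1)}\big\|f\big\|_{L^2([0,T])},
\]
which is precisely the claimed boundedness (with an explicit operator-norm bound).

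There is no genuine obstacle here, as the result is classical; the only point requiring attention is the integrability of the convolution kernel near $0$, which is guaranteed exactly by the hypothesis $\alpha>0$. An alternative route would be Schur's test applied to the kernel $K(t,s)=\frac{1}{\Gamma(\alpha)}(t-s)^{\alpha-1}\1_{\{s<t\}}$, checking that $\sup_{t}\int_0^t(t-s)^{\alpha-1}\,ds$ and $\sup_{s}\int_s^T(t-s)^{\alpha-1}\,dt$ are both finite; but the convolution argument is cleaner and self-contained, so I would present that one.
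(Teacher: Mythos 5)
Your proof is correct. It takes a somewhat different route from the paper's: the paper dispatches this lemma in one line as a ``straight-forward application of H\"older's inequality,'' i.e.\ the splitting $(t-s)^{\alpha-1}=(t-s)^{\frac{\alpha-1}{2}}(t-s)^{\frac{\alpha-1}{2}}$ followed by Cauchy-Schwarz and Fubini --- which is precisely the Schur-test argument you mention at the end and set aside as an alternative. You instead recognize $\cI_{+}^\alpha$ as convolution against the kernel $k(u)=\frac{1}{\Gamma(\alpha)}u^{\alpha-1}\1_{(0,T]}(u)$, observe $k\in L^1(\R)$ (the only place $\alpha>0$ enters, in both proofs), and invoke Young's inequality $L^1 * L^2\to L^2$. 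The two arguments rest on the same fact and even produce the same operator-norm bound $T^\alpha/\Gamma(\alpha+1)$. What your route buys: the constant and the structural reason for boundedness are immediate, and the same one line shows that $\cI_{+}^\alpha$ is in fact bounded on every $L^p\big([0,T],\R^n\big)$, $1\leq p\leq\infty$, not just $p=2$. What the paper's route buys: it is self-contained, using nothing beyond H\"older and Fubini, which is presumably why the authors felt no need to write it out. Either proof is acceptable here; your ``alternative'' is the paper's intended argument, and your main argument is a clean generalization of it.
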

\begin{proof}
  Straight-forward application of H\"older's inequality.
\end{proof}

It turns out that---on a suitable restriction of their domain---the fractional integral is invertible. Its inverse is given by the \emph{(right-sided) Riemann-Liouville fractional derivative} of order $\alpha\in(0,1)$, which is defined by
\begin{equation}\label{eq:fractional_derivative}
  \cI_+^{-\alpha} f(t)\define\frac{d}{dt}\cI_+^{1-\alpha} f(t)=\frac{1}{\Gamma(1-\alpha)}\frac{d}{dt}\int_0^t(t-s)^{-\alpha} f(s)\,ds,\qquad t\in[0,T].
\end{equation}
This operator is of course only well-defined if $\cI_+^{1-\alpha} f$ is absolutely continuous. In the sequel we shall only apply the fractional derivatives to functions $f\in\C^{\alpha+}\big([0,T],\R^n\big)$, for which $\cI_+^{1-\alpha} f$ is even continuously differentiable by virtue of \cref{lem:holder_fractional_integral}.

\begin{lemma}\label{lem:prelim_fractional_integral_inverse}
  Let $\alpha\in(0,1)$. Then 
  \begin{equation}\label{eq:prelim_derivative_left_inverse}
    \big(\cI_{+}^{-\alpha}\circ\cI_{+}^\alpha\big) f=f \qquad\forall\,f\in L^1\big([0,T],\R^n\big),
  \end{equation}
  as well as
  \begin{equation*}
    \big(\cI_{+}^\alpha\circ\cI_{+}^{-\alpha}\big) f=f \qquad\forall\,f\in\C^{\alpha+}\big([0,T],\R^n\big).
  \end{equation*}
\end{lemma}

\begin{proof}
  \Cref{eq:prelim_derivative_left_inverse} is \cite[Theorem 2.4]{Samko1993}. This result also states that, if $g\define\cI_{+}^{1-\alpha} f$ is absolutely continuous, then
  \begin{equation*}
    \big(\cI_{+}^\alpha\circ\cI_{+}^{-\alpha}\big) f(t)=f(t)-\frac{g(0)}{\Gamma(\alpha)}t^{\alpha-1},\qquad t\in[0,T].
  \end{equation*}
  If $f\in\C^\beta\big([0,T],\R^n\big)$ for some $\beta\in(\alpha,1]$, then $g\in\C^{1-\alpha+\beta}\big([0,T],\R^n\big)$ by \cref{lem:holder_fractional_integral}. Since $1-\alpha+\beta>1$, $g$ is absolutely continuous. One also easily checks that $g(0)=0$. This completes the proof.
\end{proof}

\begin{lemma}\label{lem:holder_fractional_derivative}
  Let $\alpha\in(0,1)$ and $\beta\in[\alpha,1]$. Then there is a constant $C_{\alpha,\beta}>0$ such that, for each $f\in\C^\beta\big([0,T],\R^n\big)$, 
  \begin{equation*}
    \big\|\cI_+^{-\alpha}f\big\|_{\C^{\beta-\alpha}}\leq C_{\alpha,\beta}\vertiii{f}_{\C^\beta}.
  \end{equation*}
\end{lemma}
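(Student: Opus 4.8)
The plan is to read the fractional derivative through its defining factorisation $\cI_+^{-\alpha}=\frac{d}{dt}\circ\cI_+^{1-\alpha}$ from \eqref{eq:fractional_derivative} and thereby reduce the claim to the mapping property of the fractional \emph{integral} already recorded in \cref{lem:holder_fractional_integral}. The guiding idea is a regularity budget: applying an integral of order $1-\alpha$ to a $\beta$-Hölder function should raise its regularity to order $(1-\alpha)+\beta>1$, after which a single classical derivative brings us back down to the target order $\beta-\alpha$. Since \cref{lem:holder_fractional_integral} is quantitative, the constants chain automatically and yield the asserted bound.

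Concretely, I would first apply \cref{lem:holder_fractional_integral} with fractional order $1-\alpha\in(0,1)$ (in place of the $\alpha$ there) and Hölder exponent $\beta$. Because $\beta\in(\alpha,1]$ we have $(1-\alpha)+\beta\in(1,2]$, and in particular $(1-\alpha)+\beta\neq 1$, so the lemma applies and gives $\cI_+^{1-\alpha}f\in\C^{(1-\alpha)+\beta}$ with $\vertiii{\cI_+^{1-\alpha}f}_{\C^{(1-\alpha)+\beta}}\leq C_{\alpha,\beta}\vertiii{f}_{\C^\beta}$ (this is exactly the regularity statement already used in the proof of \cref{lem:prelim_fractional_integral_inverse}). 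Writing $(1-\alpha)+\beta=1+(\beta-\alpha)$ with $\beta-\alpha\in(0,1]$, this says precisely that $\cI_+^{1-\alpha}f$ is continuously differentiable with $(\beta-\alpha)$-Hölder derivative; in particular it is absolutely continuous, so $\cI_+^{-\alpha}f=\frac{d}{dt}\cI_+^{1-\alpha}f$ is a genuine classical derivative and is well defined. The estimate then follows by unwinding the definition of the Hölder norm: for any $h\in\C^{1+\gamma}\big([0,T],\R^n\big)$ with $\gamma\in(0,1]$ one has $\big\|\tfrac{d}{dt}h\big\|_{\C^{\gamma}}\leq\vertiii{h}_{\C^{1+\gamma}}$, since $\tfrac{d}{dt}h=D^1h$ enters directly the defining sum of $\vertiii{h}_{\C^{1+\gamma}}$. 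Applying this with $h=\cI_+^{1-\alpha}f$ and $\gamma=\beta-\alpha$ and chaining gives $\big\|\cI_+^{-\alpha}f\big\|_{\C^{\beta-\alpha}}\leq\vertiii{\cI_+^{1-\alpha}f}_{\C^{1+(\beta-\alpha)}}\leq C_{\alpha,\beta}\vertiii{f}_{\C^\beta}$.

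The point I expect to be the real obstacle is the behaviour at the left endpoint $t=0$, together with the borderline exponent $\beta=\alpha$. The regularity gain of \cref{lem:holder_fractional_integral} degenerates exactly when $(1-\alpha)+\beta=1$, i.e. $\beta=\alpha$, which is the critical order excluded by that lemma; there $\cI_+^{1-\alpha}f$ lands only in the critical space $\C^1$ (with at best a logarithmic modulus for its derivative), so the clean "subtract one order" step is no longer available and the endpoint must be argued by hand. The transparent way to isolate this contribution is to pass through the Marchaud representation $\cI_+^{-\alpha}f(t)=\frac{1}{\Gamma(1-\alpha)}\big(t^{-\alpha}f(t)+\alpha\int_0^t\frac{f(t)-f(s)}{(t-s)^{1+\alpha}}\,ds\big)$ and to estimate the two pieces separately: the singular integral is controlled by the $\beta$-Hölder continuity of $f$, the bound $\lvert f(t)-f(s)\rvert\le\vertiii{f}_{\C^\beta}\lvert t-s\rvert^\beta$ making the integrand $\lesssim(t-s)^{\beta-1-\alpha}$ integrable precisely because $\beta>\alpha$, whereas the boundary term $t^{-\alpha}f(t)$ carries the full sensitivity to the value of $f$ at the origin and is where the estimate is most delicate. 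Handling this term, and the matching degeneration at $\beta=\alpha$, is the step that requires the most care.
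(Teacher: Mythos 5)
Your first two paragraphs are precisely the paper's proof, which consists of the single chain $\big\|\cI_+^{-\alpha}f\big\|_{\C^{\beta-\alpha}}\leq\big\|\cI^{1-\alpha}_+ f\big\|_{\C^{1+\beta-\alpha}}\leq C_{1-\alpha,\beta}\vertiii{f}_{\C^{\beta}}$, that is, apply \cref{lem:holder_fractional_integral} with fractional order $1-\alpha$ and H\"older exponent $\beta$ and then peel off one classical derivative. Your concern about the borderline case $\beta=\alpha$ is legitimate---the paper's one-line proof invokes \cref{lem:holder_fractional_integral} without observing that its hypothesis $(1-\alpha)+\beta\neq 1$ excludes exactly $\beta=\alpha$---but this endpoint is never actually needed (every application in the paper has $\beta>\alpha$ strictly), so your proposed Marchaud-representation repair, while sensible, addresses a case the paper's own argument also leaves uncovered.
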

\begin{proof}
  Notice that $\big\|\cI_+^{-\alpha}f\big\|_{\C^{\beta-\alpha}}\leq\big\|\cI^{1-\alpha}_+ f\big\|_{\C^{1+\beta-\alpha}}\leq C_{1-\alpha,\beta}\vertiii{f}_{\C^{\beta}}$ by \cref{lem:holder_fractional_integral}.
\end{proof}

\section{The Girsanov Theorem for the Liouville Process}\label{sec:Girsanov1}

The objective of this section is twofold: First, we apply the Girsanov theorem to the process \eqref{eq:innovation_sde}, see \cref{prop:density_girsanov}. This result prepares the representation of the density in terms of the Wiener-Liouville bridge, which is completed in \cref{prop:repres_psi}. In \cref{sec:reg_by_noise} we show that \eqref{eq:sde} is strongly well posed under the condition \ref{cond:h} as well as that \eqref{eq:innovation_sde} is weakly well posed.

\subsection{Statement and Application to the Conditional Density}

Let $(\F_t)_{t\geq 0}$ be the right-continuous completion of the natural filtration of the Wiener process driving the Liouville process $\tilde{B}$. Notice that there is a normalization constant $\varrho_H>0$ such that
\begin{equation*}
  \tilde{B}_t=\varrho_H\cI_+^{H-\frac12}W(t),\qquad t\geq 0,
\end{equation*}
for any $H\in(0,1)$, where we recall that $\cI_+^{\frac12-H}$ is defined by \eqref{eq:fractional_integral} and \eqref{eq:fractional_derivative}, respectively. It is convenient to introduce the following class of drifts:

\begin{definition}\label{def:fractional_admissible}
  Let $H\in(0,1)$ and $\Upsilon:\Omega\times\R_+\to\R^n$ be an $(\F_t)_{t\geq 0}$-adapted random process with continuous sample paths and $\Upsilon_0=0$ a.s. We say that $\Upsilon$ is an \emph{admissible fractional drift} up to time $T>0$ if there is a $\rho>0$ such that
  \begin{equation}\label{eq:fractional_admissible}
    \sup_{t\in[0,T]}\Expec{\exp\left(\rho \left|\frac{d}{dt}\Big(\cI_+^{\frac12-H}\Upsilon\Big)(t)\right|^2\right)}<\infty.
  \end{equation}  
\end{definition}

\begin{lemma}\label{lem:girsanov}
  Let $H\in(0,1)$ and suppose that $\Upsilon:\Omega\times\R_+\to\R^n$ is an admissible fractional drift up to time $T>0$. Then there is a probability measure $\Q_T\approx\prob$ under which $\big(\tilde{B}_t+\Upsilon_t\big)_{t\in[0,T]}$ is a Liouville process. The probability measure $\Q_T$ can be defined by the density
  \begin{equation*}
    \frac{d\Q_T}{d\prob}=\exp\left(-\frac{1}{\varrho_H}\int_0^T\Braket{\frac{d}{dt}\Big(\cI_+^{\frac12-H}\Upsilon\Big)(t),dW_t}-\frac{1}{2\varrho_H^2}\int_0^T\Big|\frac{d}{dt}\Big(\cI_+^{\frac12-H}\Upsilon\Big)(t)\Big|^2\,dt\right),
  \end{equation*}
  where $\varrho_H>0$ is a normalization constant.
\end{lemma}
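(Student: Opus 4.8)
The plan is to recognize this as a direct application of the classical Girsanov theorem for Brownian motion, after translating the statement about the Liouville process $\tilde{B}$ into an equivalent statement about the underlying Wiener process $W$. The key observation is that $\tilde{B}_t = \varrho_H \cI_+^{H-\frac12}W(t)$, so shifting $\tilde{B}$ by the drift $\Upsilon$ corresponds to shifting $W$ by a suitable process, which we identify by inverting the fractional integral.

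First, I would set $v_t \define \frac{d}{dt}\big(\cI_+^{\frac12-H}\Upsilon\big)(t)$, which is well defined and continuous under the admissibility hypothesis (it is precisely the object appearing in \eqref{eq:fractional_admissible}). Using $\cI_+^{-\alpha} = \frac{d}{dt}\cI_+^{1-\alpha}$ from \eqref{eq:fractional_derivative} together with the inversion identities of \cref{lem:prelim_fractional_integral_inverse}, I would verify that $\varrho_H\cI_+^{H-\frac12}\big(\int_0^\cdot v_s\,ds\big) = \Upsilon$, i.e. that applying the fractional integral $\varrho_H\cI_+^{H-\frac12}$ to the process $\int_0^t v_s\,ds$ recovers $\Upsilon$. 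Consequently, if we define a new process $\widetilde{W}_t \define W_t + \varrho_H\int_0^t v_s\,ds$, then $\varrho_H\cI_+^{H-\frac12}\widetilde{W} = \tilde{B} + \Upsilon$ by linearity of the fractional integral. Thus it suffices to show that $\widetilde{W}$ is a standard Wiener process under $\Q_T$.

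Second, I would invoke the Girsanov theorem in its standard form. The candidate density is exactly the Dol\'eans-Dade exponential
\begin{equation*}
  \frac{d\Q_T}{d\prob}=\exp\left(-\varrho_H\int_0^T\Braket{v_t,dW_t}-\frac{\varrho_H^2}{2}\int_0^T |v_t|^2\,dt\right),
\end{equation*}
so under $\Q_T$ the shifted process $\widetilde{W}$ is a Brownian motion provided the exponential is a true martingale (and not merely a supermartingale). This is where the admissibility condition \eqref{eq:fractional_admissible} does its work: it guarantees $\sup_{t\le T}\Expec{\exp(\rho|v_t|^2)}<\infty$, and in particular the uniform-in-$t$ exponential-square integrability is precisely the type of hypothesis that yields Novikov's (or Kazamaki's) criterion, ensuring $\Expec{\frac{d\Q_T}{d\prob}}=1$ and hence $\Q_T$ is a genuine probability measure equivalent to $\prob$.

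The main obstacle is verifying the martingale property rigorously, i.e. checking that \eqref{eq:fractional_admissible} suffices to apply Novikov's criterion on $[0,T]$. A single pointwise-in-$t$ exponential moment bound is slightly weaker than the integrated Novikov condition $\Expec{\exp\big(\tfrac12\int_0^T \varrho_H^2|v_t|^2\,dt\big)}<\infty$, so I would either partition $[0,T]$ into small subintervals on which Novikov applies (using the uniform bound $\sup_{t\le T}\Expec{\exp(\rho|v_t|^2)}<\infty$ together with the Cauchy--Schwarz / H\"older trick to control the exponential of the integral by finitely many pointwise exponential moments) and concatenate the resulting equivalences, or appeal to a localization argument that upgrades the supermartingale to a martingale via the uniform integrability furnished by \eqref{eq:fractional_admissible}. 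Once the martingale property is secured, the identification of $\tilde{B}+\Upsilon$ as a Liouville process under $\Q_T$ follows immediately from the first step together with the equivalence $\Q_T\approx\prob$.
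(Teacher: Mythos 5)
Your proposal is correct and follows essentially the same route as the paper: invert the transformation $\cI_+^{H-\frac12}$ via \cref{lem:prelim_fractional_integral_inverse} to reduce the claim to a drift change for the underlying standard Wiener process, then apply Girsanov under the pointwise exponential-moment condition \eqref{eq:fractional_admissible}. The only difference is that the paper simply cites \cite[Chapter 7, Theorem 1.1]{Friedman1975} for the sufficiency of $\sup_{t\in[0,T]}\Expec{e^{\rho|v_t|^2}}<\infty$, whereas you sketch the proof of that criterion (partition $[0,T]$ into subintervals of length at most $2\rho/\varrho_H^2$, verify Novikov on each via Jensen's inequality, and concatenate by multiplicativity of the stochastic exponential), which is exactly the content of the cited result.
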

\begin{proof}
  This is an immediate consequence of the Girsanov theorem for the standard Wiener process and the invertibility of the transformation $\cI_+^{H-\frac12}$, see \cref{lem:prelim_fractional_integral_inverse}. In fact, the former is applicable to $\big(W_t+\int_0^t v_s\,ds\big)_{t\in[0,T]}$ as soon as there is a $\rho>0$ such that
  \begin{equation*}
    \sup_{t\in[0,T]}\Expec{e^{\rho |v_t|^2}}<\infty,
  \end{equation*}
  see \cite[Chapter 7, Theorem 1.1]{Friedman1975}. Applying this with $v_t=\frac{d}{dt}\big(\cI_+^{\frac12-H}\Upsilon\big)(t)$ concludes the proof.
\end{proof}

The first technical step in the proof of the Gaussian-type lower bound is to establish fractional admissibility for the drift in \eqref{eq:innovation_sde}. More specifically, the following result on the solution $\Phi(\ell)$ of \eqref{eq:innovation_sde} holds:

\begin{lemma}\label{lem:novikov_satisfied}
  Let $H\in(0,1)$ and assume \ref{cond:h}. Then, for each $\ell\in\Cloc^{H-}(\R_+,\R^n)$, the process $\sigma^{-1}\int_0^\cdot b\big(\Phi_t(\ell)\big)\,dt$ is an admissible fractional drift up to any time $T>0$.
\end{lemma}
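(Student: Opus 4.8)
The process to analyze is $\Upsilon_t\define\sigma^{-1}\int_0^t b\big(\Phi_s(\ell)\big)\,ds$, which is absolutely continuous with $\dot\Upsilon_t=\sigma^{-1}b\big(\Phi_t(\ell)\big)$. The plan is to obtain an explicit expression for $\frac{d}{dt}\big(\cI_+^{\frac12-H}\Upsilon\big)$, to bound it pathwise by a fixed power of a supremum- or H\"older-norm of the Gaussian Liouville process $\tilde B$ on $[0,T]$, and then to invoke Fernique's theorem to verify \eqref{eq:fractional_admissible}. The starting point is the identity
\begin{equation*}
  \frac{d}{dt}\big(\cI_+^{\frac12-H}\Upsilon\big)(t)=\cI_+^{\frac12-H}\dot\Upsilon(t),\qquad t\in(0,T],
\end{equation*}
valid for both signs of $\frac12-H$. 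Since $\Upsilon_0=0$ we may write $\Upsilon=\cI_+^1\dot\Upsilon$. If $H<\frac12$ the claim then follows from the semigroup property $\cI_+^{\frac12-H}\cI_+^1=\cI_+^1\cI_+^{\frac12-H}$ and the fundamental theorem of calculus; if $H>\frac12$ one writes $\cI_+^1=\cI_+^{H-\frac12}\cI_+^{\frac32-H}$, applies the left-inverse property of \cref{lem:prelim_fractional_integral_inverse} to get $\cI_+^{\frac12-H}\Upsilon=\cI_+^{\frac32-H}\dot\Upsilon$, and differentiates using the definition \eqref{eq:fractional_derivative}.

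Next I would establish the pathwise a priori bounds on $\Phi(\ell)$. Assumption \ref{cond:h} entails at most linear growth of $b$ in either regime, so Gr\"onwall's inequality applied to \eqref{eq:innovation_sde} furnishes a constant $C_T>0$ (depending on $T,\ell,\sigma,H$, but crucially \emph{not} on any contraction, which is not assumed here) with
\begin{equation*}
  \sup_{s\in[0,T]}\big|\Phi_s(\ell)\big|\leq C_T\Big(1+\sup_{s\in[0,T]}\big|\tilde B_s\big|\Big).
\end{equation*}
When $H>\frac12$, the drift $t\mapsto\int_0^t b(\Phi_s(\ell))\,ds$ is Lipschitz on $[0,T]$, so $\Phi(\ell)$ inherits the H\"older regularity of $\ell$ and $\tilde B$; fixing any $H'\in(0,H)$ this upgrades to $\big\|\Phi(\ell)\big\|_{\C^{H'}([0,T])}\leq C_T\big(1+\big\|\tilde B\big\|_{\C^{H'}([0,T])}\big)$.

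The two regimes are then treated separately. For $H<\frac12$, $\cI_+^{\frac12-H}$ is a genuine fractional integral of order $\frac12-H\in(0,\frac12)$ with locally integrable kernel, so that by linear growth
\begin{equation*}
  \Big|\cI_+^{\frac12-H}\dot\Upsilon(t)\Big|\leq C\int_0^t(t-s)^{-\frac12-H}\big(1+|\Phi_s(\ell)|\big)\,ds\leq C_T\Big(1+\sup_{s\in[0,T]}|\Phi_s(\ell)|\Big),
\end{equation*}
using $\int_0^t(t-s)^{-\frac12-H}\,ds\leq T^{\frac12-H}/(\frac12-H)$; combined with the Gr\"onwall bound this is $\leq C_T\big(1+\sup_{s\in[0,T]}|\tilde B_s|\big)$. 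For $H>\frac12$, $\cI_+^{\frac12-H}=\cI_+^{-(H-\frac12)}$ is instead a fractional derivative of order $H-\frac12$. The key point is that $\dot\Upsilon=\sigma^{-1}b(\Phi(\ell))$ is $\alpha H'$-H\"older, and the hypothesis $\alpha>1-\frac{1}{2H}$ gives $\alpha H>H-\frac12$, hence $\alpha H'>H-\frac12$ for $H'<H$ close enough to $H$. Thus \cref{lem:holder_fractional_derivative} applies and yields
\begin{equation*}
  \sup_{t\in[0,T]}\Big|\cI_+^{-(H-\frac12)}\dot\Upsilon(t)\Big|\leq C\,\vertiii{\sigma^{-1}b(\Phi(\ell))}_{\C^{\alpha H'}}\leq C_T\big(1+\big\|\Phi(\ell)\big\|_{\C^{H'}([0,T])}^{\alpha}\big)\leq C_T\big(1+\big\|\tilde B\big\|_{\C^{H'}([0,T])}^{\alpha}\big).
\end{equation*}

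To conclude, in both regimes the bound on $\big|\frac{d}{dt}\cI_+^{\frac12-H}\Upsilon(t)\big|$ is uniform in $t\in[0,T]$ and, after squaring and using $\alpha\leq1$, of the form $C_T(1+N^2)$, where $N$ is the supremum norm (resp.\ the $\C^{H'}([0,T])$-norm) of the Liouville process. As the law of $\tilde B$ restricted to $[0,T]$ is a centered Gaussian measure on the corresponding separable Banach space, Fernique's theorem (\cref{prop:fernique}) gives $\Expec{e^{\rho N^2}}<\infty$ for all sufficiently small $\rho>0$, whence $\sup_{t\in[0,T]}\Expec{\exp\big(\rho\big|\frac{d}{dt}\cI_+^{\frac12-H}\Upsilon(t)\big|^2\big)}<\infty$, i.e.\ admissibility. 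I expect the only genuinely delicate point to be the regime $H>\frac12$: there one must apply a \emph{fractional derivative} to the composition $b(\Phi(\ell))$, which is licit precisely because its H\"older budget $\alpha H$ strictly exceeds the differentiation order $H-\frac12$—exactly the content of $\alpha>1-\frac{1}{2H}$. Everything else reduces to the contraction-free Gr\"onwall estimate and Fernique's theorem.
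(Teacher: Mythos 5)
Your proof is correct and takes essentially the same route as the paper's: the same commutation identity $\tfrac{d}{dt}\big(\cI_+^{\frac12-H}\Upsilon\big)=\cI_+^{\frac12-H}\dot\Upsilon$, the same case split---a sup-norm bound on the fractional integral combined with a Gr\"onwall estimate for $H<\frac12$, and the H\"older estimate of \cref{lem:holder_fractional_derivative} applied to $b(\Phi(\ell))$ using $\alpha H>H-\frac12$ for $H>\frac12$---concluded in both regimes by Fernique's theorem. The only (harmless) deviations are that you spell out the justification of the commutation identity, which the paper merely asserts, and that you consolidate the estimates into a single Gaussian norm of $\tilde B$, rendering the paper's Cauchy--Schwarz step unnecessary.
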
 
\begin{proof}
  Let us first observe that, for each $\alpha\in(-1,1)$,
  \begin{equation*}
    \frac{d}{dt}\cI_+^\alpha\left(\int_0^\cdot b\big(\Phi_s(\ell)\big)\,ds\right)(t)=\Big(\cI_+^\alpha b\big(\Phi(\ell)\big)\Big)(t)\qquad\forall\,t\geq 0.
  \end{equation*}
  Let us fix $T>0$ and distinguish small and large Hurst parameters in the sequel:

  If $H<\frac12$, then 
  \begin{equation*}
    \sup_{t\in[0,T]}\left|\sigma^{-1}\Big(\cI_+^{\frac12-H}b\big(\Phi(\ell)\big)\Big)(t)\right|\lesssim T^{\frac12-H}\big\|b\big(\Phi(\ell)\big)\big\|_\infty\lesssim T^{\frac12-H}\Big(1+\big\|\Phi(\ell)\big\|_\infty\Big)
  \end{equation*}
  by (at most) linear growth of $b$. A straight-forward Gr\"onwall estimate shows that
  \begin{equation}\label{eq:gronwall_novikov}
    \big\|\Phi(\ell)\big\|_{\infty}\lesssim \Big(1+\|\ell\|_\infty+\|\tilde{B}\|_\infty\Big)e^{C T}.
  \end{equation}
  Since $\|\tilde{B}\|_\infty$ has Gaussian tails, we can choose $\rho>0$ sufficiently small such that \eqref{eq:fractional_admissible} holds true.
  
  For large Hurst parameters $H>\frac12$ we let $\alpha>1-\frac{1}{2H}$ be such that $b\in\C^\alpha(\R^n,\R^n)$ and use \cref{lem:holder_fractional_derivative} to find
  \begin{equation*}
    \sup_{t\in[0,T]}\left|\sigma^{-1}\Big(\cI_+^{\frac12-H}b\big(\Phi(\ell)\big)\Big)(t)\right|\lesssim T^{\gamma}\big\|b\big(\Phi(\ell)\big)\big\|_{\C^{\gamma}}\lesssim T^\gamma\big\|\Phi(\ell)\big\|_{\C^{\frac{\gamma}{\alpha}}}^\alpha
  \end{equation*}
  for each $\gamma\in\big(H-\frac12,\alpha H\big)$. Since $b$ grows at most linearly, it is easy to see that
   \begin{equation*}
     \big\|\Phi(\ell)\big\|_{\C^{\frac{\gamma}{\alpha}}}\lesssim \Big(\|\ell\|_{\C^{\frac{\gamma}{\alpha}}}+\|\tilde{B}\|_{\C^{\frac{\gamma}{\alpha}}} + \big\|\Phi(\ell)\big\|_\infty\Big).
   \end{equation*}
   Since $\|\tilde{B}\|_{\C^{\frac{\gamma}{\alpha}}}$ has Gaussian tails by \cref{prop:fernique}, the Gr\"onwall estimate \eqref{eq:gronwall_novikov} and an application of Cauchy-Schwarz imply \eqref{eq:fractional_admissible} for sufficiently small $\rho>0$. This completes the proof.
\end{proof}

The previous two lemmas imply the following result:
\begin{proposition}\label{prop:density_girsanov}
  For $\ell\in\Cloc^{H-}(\R_+,\R^n)$ let $\Phi(\ell)$ be a weak solution of \eqref{eq:innovation_sde}. Then there is a normalization constant $\varrho_H>0$ such that, for each $T>0$, the density of $\Phi_T(\ell)$ admits the expression 
  \begin{equation}\label{eq:density_girsanov}
    p_T(\ell;y)=\frac{1}{\big(\sqrt{2\pi}\varrho_H T^H\big)^n\det(\sigma)}\exp\left(-\frac{\big|\sigma^{-1}\big(y-\ell(T)\big)\big|^2}{\varrho_H^2 T^{2H}}\right)\Psi_T(\ell;y),
  \end{equation}
  where
  \begin{align}
    \Psi_T(\ell;y)&\define\mathbb{E}\left[\exp\left(\int_0^T \big\langle\fK_t^{\ell}, dW_t\big\rangle-\frac{1}{2}\int_0^T \big|\fK_t^{\ell}\big|^2 dt\right)\,\middle|\,\sigma\tilde{B}_T+\ell(T)=y\right], \label{eq:psi_girsanov}\\
    \fK_t^{\ell}&\define(\varrho_H\sigma)^{-1}\Big(\cI_{+}^{\frac12-H}b\big(\ell+\sigma\tilde{B}\big)\Big)(t). \nonumber
  \end{align}
\end{proposition}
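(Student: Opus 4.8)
The plan is to realize the law of $\Phi_T(\ell)$ as a Girsanov reweighting of the law of the \emph{free} Gaussian process $X_t\define\ell(t)+\sigma\tilde B_t$ and then to disintegrate this reweighting with respect to the terminal value $X_T$. Since $\tilde B_T=\varrho_H\cI_+^{H-\frac12}W(T)$ is centered Gaussian, the random variable $X_T=\ell(T)+\sigma\tilde B_T$ has, by the normalization of $\varrho_H$, exactly the Gaussian Lebesgue density appearing as the prefactor in \eqref{eq:density_girsanov} (the factor $\det(\sigma)^{-1}$ stems from the linear change of variables $\tilde B_T\mapsto\sigma\tilde B_T$). It thus remains to show that $\L\big(\Phi_T(\ell)\big)$ has a density with respect to $\L(X_T)$ whose value at $y$ is the conditional expectation $\Psi_T(\ell;y)$.

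To this end I would first invoke \cref{lem:novikov_satisfied}, which guarantees that $\Upsilon\define\sigma^{-1}\int_0^\cdot b\big(\Phi_s(\ell)\big)\,ds$ is an admissible fractional drift up to time $T$, and then apply \cref{lem:girsanov}. This produces a measure $\Q_T\approx\prob$, with density $Z_T\define\frac{d\Q_T}{d\prob}$ given explicitly in \cref{lem:girsanov}, under which $\tilde B^{\Q}\define\tilde B+\Upsilon$ is a Liouville process. Writing $\Phi_t(\ell)=\ell(t)+\int_0^t b\,ds+\sigma\tilde B_t=\ell(t)+\sigma\tilde B_t^{\Q}$, one sees that under $\Q_T$ the solution $\Phi(\ell)$ is nothing but the free process driven by the $\Q_T$-Liouville process $\tilde B^{\Q}$. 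Hence, for any bounded measurable $f$,
\begin{equation*}
  \Expec{f\big(\Phi_T(\ell)\big)}=\mathbb{E}_{\Q_T}\!\left[f\big(\ell(T)+\sigma\tilde B_T^{\Q}\big)\,Z_T^{-1}\right].
\end{equation*}
Using the identity $\frac{d}{dt}\cI_+^{\frac12-H}\big(\int_0^\cdot b(\Phi_s)\,ds\big)=\cI_+^{\frac12-H}b\big(\Phi(\ell)\big)$ from the proof of \cref{lem:novikov_satisfied}, the Girsanov exponent in $Z_T$ is driven by $\varrho_H\sigma^{-1}\cI_+^{\frac12-H}b\big(\Phi(\ell)\big)$; inverting $Z_T$ and re-expressing the $\prob$-stochastic integral $\int_0^T\langle\,\cdot\,,dW_t\rangle$ through the $\Q_T$-Wiener process $W^{\Q}$ driving $\tilde B^{\Q}$, the two quadratic terms combine so that $Z_T^{-1}=\exp\big(\int_0^T\langle\fK^{\Q},dW^{\Q}\rangle-\frac12\int_0^T|\fK^{\Q}|^2\,dt\big)$, where $\fK^{\Q}$ is obtained from $\fK^\ell$ by substituting $\tilde B^{\Q}$ for $\tilde B$ (recall $\Phi(\ell)=\ell+\sigma\tilde B^{\Q}$ under $\Q_T$).

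The next step is a transfer-of-law argument: since the bijection $\cI_+^{\pm(H-\frac12)}$ ties $\tilde B^{\Q}$ to $W^{\Q}$ exactly as $\tilde B$ is tied to $W$, the pair $(\tilde B^{\Q},W^{\Q})$ under $\Q_T$ has the same law as $(\tilde B,W)$ under $\prob$. Applying this to the functional above converts the $\Q_T$-expectation into a $\prob$-expectation and turns $\fK^{\Q}$ back into $\fK^\ell$, yielding
\begin{equation*}
  \Expec{f\big(\Phi_T(\ell)\big)}=\Expec{f\big(\ell(T)+\sigma\tilde B_T\big)\,M_T},\qquad M_T\define\exp\!\left(\int_0^T\langle\fK_t^\ell,dW_t\rangle-\tfrac12\int_0^T|\fK_t^\ell|^2\,dt\right).
\end{equation*}
Taking $f\equiv 1$ shows $\Expec{M_T}=1$, so $M_T\geq 0$ is a genuine weight. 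Finally I would disintegrate the right-hand side along $X_T=\ell(T)+\sigma\tilde B_T$: as $X_T$ admits the Gaussian density $p_{X_T}$ (the prefactor), the general disintegration theorem gives $\Expec{f(X_T)M_T}=\int_{\R^n}f(y)\,p_{X_T}(y)\,\Expec{M_T\mid X_T=y}\,dy$, and comparing with $\Expec{f(\Phi_T(\ell))}=\int f(y)\,p_T(\ell;y)\,dy$ identifies $p_T(\ell;y)=p_{X_T}(y)\,\Expec{M_T\mid X_T=y}=p_{X_T}(y)\Psi_T(\ell;y)$ for Lebesgue-a.e.\ $y$; in particular $\L(\Phi_T(\ell))\ll\lambda^n$.

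The main obstacle I anticipate is the measure-theoretic bookkeeping around the last two steps: making the conditioning on the continuous random variable $X_T$ rigorous (existence of a regular conditional distribution and the resulting a.e.\ factorization of the density), together with the careful transfer of the stochastic integral under the change of measure --- specifically, verifying that $\int_0^T\langle\fK^{\Q},dW^{\Q}\rangle$ is indeed the image of $\int_0^T\langle\fK^\ell,dW\rangle$ under the law-preserving map $(\tilde B,W)\mapsto(\tilde B^{\Q},W^{\Q})$, which requires realizing the stochastic integral as a fixed measurable functional of the driving path. The remaining computations (the fractional-operator identity and the algebraic combination of the quadratic terms) are routine given \cref{lem:novikov_satisfied,lem:girsanov}.
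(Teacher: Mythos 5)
Your proof is correct and takes essentially the same route as the paper: both invoke \cref{lem:novikov_satisfied} and \cref{lem:girsanov}, rewrite $\Expec{f\big(\Phi_T(\ell)\big)}$ under the Girsanov measure where $\Phi(\ell)$ becomes the free process with a Gaussian endpoint, express the inverse Radon--Nikodym density as a Dol\'eans--Dade exponential driven by the new Wiener process, and disintegrate with respect to the terminal value via regular conditional probabilities. The only difference is cosmetic: you transfer the law back to $\prob$ before disintegrating, which makes explicit the identification of the resulting conditional expectation with $\Psi_T(\ell;y)$ --- a step the paper's proof leaves implicit.
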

\begin{proof}
  \Cref{lem:girsanov,lem:novikov_satisfied} imply that, for any $f:\R^n\to\R$ bounded Borel measurable,
  \begin{align*}
    &\phantom{=}\int_{\R^n} f(y)p_T(\ell;y)\,dy=\mathbb{E}_\prob\left[f\left(\ell(T)+\int_0^T b\big(\Phi_t(\ell)\big)\,dt+\sigma\tilde{B}_T\right)\right] \\
    &=\mathbb{E}_{\Q_T^{\ell}} \bigg[f\left(\ell(T)+\varrho_H\sigma \Big(\cI_{+}^{H-\frac12}W^{\mathbb{Q}_T^\ell}\Big)(T)\right) \\
    &\phantom{=\mathbb{E}_{\Q_T^{z,w}} \bigg[}\exp\left(\int_0^T \Braket{\widehat{\fK}_t^{\ell}, dW_t^{\mathbb{Q}_T^\ell}}-\frac{1}{2}\int_0^T \big|\widehat{\fK}_t^{\ell}\big|^2 dt\right)\bigg],
  \end{align*}
  where $\big(W^{\mathbb{Q}_T^\ell}_t\big)_{t\in[0,T]}$ is a standard Wiener process under $\Q_T^{\ell}$ and 
  \begin{equation*}
    \widehat{\fK}^{\ell}_t=(\varrho_H\sigma)^{-1}\Big(\cI_{+}^{\frac12-H}b\Big(\ell+\sigma\cI_{+}^{H-\frac12}W^{\mathbb{Q}_T^\ell}\Big)\Big)(t).
  \end{equation*}
  Note that the drifted $\Q^{\ell}_T$-Liouville process $B^{\mathbb{Q}_T^\ell}=\ell + \varrho_H\sigma\cI_+^{H-\frac12}W^{\mathbb{Q}_T^\ell}$ has distribution $B^{\mathbb{Q}_T^\ell}_T\sim N\left(\ell(T), \varrho_H^2 T^{2H}\sigma\sigma^\top\right)$. Let us abbreviate its density by
  \begin{equation*}
    \varphi_T^{\ell}(y)\define\frac{1}{(2\pi)^{\frac{n}{2}}(\varrho_H T^H)^n\det(\sigma)}\exp\left(-\frac{\big|\sigma^{-1}\big(y-\ell(T)\big)\big|^2}{\varrho_H^2 T^{2H}}\right).
  \end{equation*}
  We see that
  \begin{align*}
    &\phantom{=}\int_{\R^n} f(y)p_T(\ell;y)\,dy \\
    &=\int_{\R^n}f(y)\varphi_T^{\ell}(y)\;\mathbb{E}_{\Q_T^{\ell}} \left[\exp\left(\int_0^T \Braket{\widehat{\fK}_t^{\ell}, dW_t^{\mathbb{Q}_T^\ell}}-\frac{1}{2}\int_0^T \big|\widehat{\fK}_t^{\ell}\big|^2 dt\right)\,\middle|\,B_T^{\mathbb{Q}_T^\ell}=y\right]\,dy,
  \end{align*}
  where the expectation is, as usual, understood in the sense of regular conditional probabilities.
\end{proof}

\subsection{Regularization by Noise}\label{sec:reg_by_noise}

In this section we show that under the very mild regularity conditions on the drift vector field imposed in \cref{thm:lower_bound_tindel} the equations \eqref{eq:sde} and \eqref{eq:innovation_sde} are well posed. We begin with \eqref{eq:sde}. This is essentially a well known result: Indeed, for \emph{bounded} drift vector fields this was proven in \cite[Theorem 1.9]{Catellier2016}, see also \cite{Nualart2002} for the simpler one-dimensional case as well as the review article \cite{Galeati2021}. The well-posedness under a linear growth condition can be deduced by a standard stopping time argument, which we include for the reader's convenience below:
\begin{proposition}\label{prop:strong_well_posedness_tindel}
  Let $\sigma\in\Lin{n}$ be invertible and $y_0\in\R^n$ be deterministic. Suppose that $b:\R^n\to\R^n$ satisfies assumption \ref{cond:h}. Then the equation \eqref{eq:sde} has a pathwise unique strong solution.
\end{proposition}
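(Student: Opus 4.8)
The plan is to reduce to the case of a bounded drift, where pathwise well-posedness is already available from \cite[Theorem 1.9]{Catellier2016}, and then to remove the boundedness by a localization argument. First I would perform the linear change of variables $\tilde Y_t\define\sigma^{-1}Y_t$, which turns \eqref{eq:sde} into $d\tilde Y_t=\tilde b(\tilde Y_t)\,dt+dB_t$ with $\tilde b(\,\cdot\,)\define\sigma^{-1}b(\sigma\,\cdot\,)$. Since $\sigma$ is invertible, $\tilde b$ inherits assumption \ref{cond:h} (linear maps preserve global H\"older regularity, Borel measurability, and at most linear growth), so it suffices to treat the equation with $\sigma=\id$; abusing notation I continue to write $b$ and $Y$ for the transformed quantities. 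I note that under \ref{cond:h} the drift always has at most linear growth: this is explicit for $H<\frac12$, while for $H>\frac12$ a globally $\alpha$-H\"older function with $\alpha\leq 1$ obeys $|b(y)|\leq|b(0)|+\|b\|_{\C^\alpha}|y|^\alpha$.

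Next I would truncate. Fix a smooth cutoff $\chi_N$ with $\chi_N\equiv 1$ on the ball $B(0,N)$ and $\supp\chi_N\subset B(0,N+1)$, and set $b_N\define\chi_N b$. Then $b_N$ is bounded and still lies in the regularity class of \ref{cond:h}: for $H>\frac12$ it is a compactly supported, hence bounded, $\alpha$-H\"older function (as $b$ is bounded on the compact set $\supp\chi_N$), and for $H<\frac12$ it is a bounded Borel function, which is exactly the regime covered by \cite[Theorem 1.9]{Catellier2016}. Thus each $b_N$-equation has a pathwise unique strong solution $Y^N$, and I introduce the exit times $\tau_N\define\inf\{t\geq 0:|Y_t^N|\geq N\}$. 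Because $b_N=b_M=b$ on $B(0,N)$ whenever $M\geq N$, a localized version of pathwise uniqueness for the bounded-drift equation shows that the family $(Y^N)_N$ is consistent, $Y^N=Y^M$ on $[0,\tau_N]$ with $\tau_N\leq\tau_M$; setting $Y_t\define Y_t^N$ for $t\in[0,\tau_N]$ then defines a solution of the original equation up to the explosion time $\tau_\infty\define\lim_N\tau_N$.

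It remains to rule out explosion and to establish global uniqueness, both of which rest on the same a priori bound. On $[0,\tau_N]$ the identity $Y_t=y_0+\int_0^t b(Y_s)\,ds+B_t$ together with the linear growth of $b$ and Gr\"onwall's lemma yields
\begin{equation*}
  \sup_{t\in[0,\tau_N\wedge T]}|Y_t|\leq\Big(|y_0|+CT+\sup_{t\in[0,T]}|B_t|\Big)e^{CT}=:R_T<\infty\quad\text{a.s.},
\end{equation*}
since fBm has a.s. continuous, hence locally bounded, sample paths, and $R_T$ does \emph{not} depend on $N$. On the event $\{\tau_\infty\leq T\}$ one would have $N=|Y_{\tau_N}|\leq R_T$ for every $N$, forcing $R_T=\infty$; hence $\prob(\tau_\infty\leq T)=0$ for every $T$ and $Y$ is a global strong solution. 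For uniqueness I would take two solutions $Y,Y'$, set $\varsigma_N\define\inf\{t:|Y_t|\vee|Y_t'|\geq N\}$, observe that both solve the $b_N$-equation on $[0,\varsigma_N]$, and invoke the same localized pathwise uniqueness to get $Y=Y'$ on $[0,\varsigma_N]$; the a.s. finiteness of the sample-path supremum forces $\varsigma_N\uparrow\infty$, whence $Y\equiv Y'$.

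The main obstacle is the bookkeeping around \emph{localized} pathwise uniqueness: \cite[Theorem 1.9]{Catellier2016} delivers uniqueness for globally defined solutions of the bounded-drift equation, whereas I must compare solutions only up to random exit times. The point to verify carefully is that two processes following the $b_N$-dynamics and agreeing at time $0$ must coincide on a stochastic interval $[0,\tau]$ on which they stay in $B(0,N)$; this follows by a standard stopping argument, but it is the place where the additive structure (so that both solutions are driven by the \emph{same} realization of $B$) and the path-by-path nature of the solutions in \cite{Catellier2016} are genuinely used. Everything else—the change of variables, the truncation preserving the regularity class, and the Gr\"onwall estimate—is routine.
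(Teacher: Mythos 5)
Your proposal is correct and follows essentially the same route as the paper: truncate the drift with a smooth cutoff so that \cite[Theorem 1.9]{Catellier2016} applies, derive an a priori bound via Gr\"onwall and linear growth that is independent of the truncation level (hence no explosion), and patch solutions together using local pathwise uniqueness. Your additional details---the explicit reduction to $\sigma=\id$, the verification that truncation preserves the regularity class of \ref{cond:h}, and the careful consistency/stopping-time bookkeeping---simply flesh out steps the paper's terse proof leaves implicit.
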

\begin{proof}
  Let $R>0$ and $\psi_R:\R^n\to\R_+$ be a smooth cutoff function which is equal to $1$ on $\{|x|\leq R\}$ and vanishes outside of $B_{2R}$. As explained above, $b_R(y)\define\psi_R(y) b(y)$, being bounded H\"older continuous (resp. measurable), falls in the regime of \cite[Theorem 1.9]{Catellier2016} showing the existence of a strong solution to \eqref{eq:sde} until the random time $\tau_R\define\inf\{t\geq 0:\,|Y_t|>R\big\}$. By a Gr\"onwall argument we observe that
  \begin{equation}\label{eq:gronwall_strong}
    |Y_t|\leq \big(CT+|y_0|+\|B\|_{\infty}\big) e^{C T}\qquad\forall\, t\in[0,T]
  \end{equation}
  Hence, $\tau_R\wedge T\to T$ almost surely as $R\to\infty$. Since the terminal time $T>0$ was arbitrary the strong existence follows.

  The pathwise uniqueness follows from the local pathwise uniqueness.
\end{proof}

As a second step we shall show the well-posedness of the conditional evolution \eqref{eq:innovation_sde} under the regularity condition \ref{cond:h}. To this end, we recall that a weak solution to \eqref{eq:innovation_sde} consists of a filtered probability space and an adapted process $\big(\Phi(\ell), \tilde{B}\big)$ such that $\tilde{B}$ is a Liouville process with the given Hurst parameter and $\Phi(\ell)$ satisfies \eqref{eq:innovation_sde}.

\begin{lemma}\label{lem:innovation_sde_well_posed}
  Let $\sigma\in\Lin{n}$ be invertible and suppose that $b:\R^n\to\R^n$ satisfies assumption \ref{cond:h}. Then, for each $\ell\in\Cloc^{H-}(\R^n,\R^n)$, the equation \eqref{eq:innovation_sde} has a weak solution which is unique in law.
\end{lemma}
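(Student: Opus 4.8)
The plan is to establish both existence and uniqueness in law by a two-sided application of the Girsanov theorem for the Liouville process (\cref{lem:girsanov}), in each case reducing \eqref{eq:innovation_sde} to the drift-free process $X_t\define\ell(t)+\sigma\tilde{B}_t$. For existence I would change measure so as to \emph{create} the drift $\int_0^\cdot b(X_s)\,ds$, while for uniqueness I would, starting from an arbitrary weak solution, change measure so as to \emph{remove} it; in both directions the Radon--Nikodym density is an explicit functional of the underlying Liouville path, and it is precisely this that pins down the law.

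For existence, I would work on the canonical space carrying the Wiener process $W$ driving $\tilde{B}=\varrho_H\cI_+^{H-\frac12}W$ under $\prob$, and set $X=\ell+\sigma\tilde{B}$. The candidate drift is $\Upsilon_t\define-\sigma^{-1}\int_0^t b(X_s)\,ds$. The first step is to verify that $\Upsilon$ is an admissible fractional drift up to any $T>0$ in the sense of \eqref{eq:fractional_admissible}: this is the estimate of \cref{lem:novikov_satisfied} applied to the \emph{explicit} process $X$ rather than to a solution, and is in fact simpler, since for $H<\frac12$ one bounds $\sup_{t\le T}\big|\big(\cI_+^{\frac12-H}b(X)\big)(t)\big|\lesssim T^{\frac12-H}\big(1+\|X\|_\infty\big)$ by linear growth, while for $H>\frac12$ one bounds it by $\lesssim\|X\|_{\C^{\gamma/\alpha}}^\alpha$ for $\gamma\in(H-\tfrac12,\alpha H)$ using \cref{lem:holder_fractional_derivative} and $\alpha>1-\frac{1}{2H}$. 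In either case $\|X\|_\infty$ and $\|X\|_{\C^{\gamma/\alpha}}$ inherit Gaussian tails from $\tilde{B}$ via \cref{prop:fernique}, so \eqref{eq:fractional_admissible} holds for $\rho$ small. Then \cref{lem:girsanov} yields $\Q_T\approx\prob$ under which $\tilde{B}^{\Q}\define\tilde{B}+\Upsilon$ is a Liouville process, and since $\sigma\tilde{B}^{\Q}_t=X_t-\ell(t)-\int_0^t b(X_s)\,ds$, the pair $(X,\tilde{B}^{\Q})$ solves \eqref{eq:innovation_sde} on $[0,T]$ under $\Q_T$. Carrying this out for every $T$ and using the uniqueness below to glue the finite-horizon laws consistently, a Kolmogorov extension produces a weak solution on all of $\R_+$.

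For uniqueness in law, let $(\Phi,\tilde{B})$ be any weak solution on a filtered space with law $\Q'$, so that $\tilde{B}$ is a $\Q'$-Liouville process and $\Upsilon'_t\define\sigma^{-1}\int_0^t b(\Phi_s)\,ds$ satisfies $\tilde{B}_t+\Upsilon'_t=\sigma^{-1}\big(\Phi_t-\ell(t)\big)$. Here $\Upsilon'$ is admissible by \cref{lem:novikov_satisfied} itself, whose Gr\"onwall bound \eqref{eq:gronwall_novikov} is pathwise and hence valid for every weak solution. Applying \cref{lem:girsanov} on $[0,T]$ produces $\prob'\approx\Q'$ under which $\sigma^{-1}(\Phi-\ell)$ is a Liouville process; equivalently, under $\prob'$ the process $\Phi$ has exactly the law of the drift-free process $\ell+\sigma\tilde{B}$. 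Consequently, for any bounded measurable $F$ on path space, $\mathbb{E}_{\Q'}[F(\Phi)]=\mathbb{E}_{\prob'}\big[F(\Phi)\,\tfrac{d\Q'}{d\prob'}\big]$, where $\tfrac{d\Q'}{d\prob'}$ is the explicit exponential of \cref{lem:girsanov} built from $\fK$, a fixed functional of the Liouville path $\sigma^{-1}(\Phi-\ell)$. Since the law of that path under $\prob'$ is the Liouville measure regardless of the chosen weak solution, the right-hand side does not depend on the solution, giving uniqueness in law on $[0,T]$ and therefore on $\R_+$.

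The main obstacle is the verification of fractional admissibility in the borderline regime $H>\frac12$, where $\cI_+^{\frac12-H}$ is a genuine fractional derivative and one must quantitatively trade the H\"older exponent $\alpha>1-\frac{1}{2H}$ of $b$ against the regularity of $\ell+\sigma\tilde{B}$; this is exactly where the lower bound on $\alpha$ in \ref{cond:h} enters. The only other point requiring care is the passage from finite horizons to $\R_+$, which is a purely routine consistency and extension argument once uniqueness in law is available.
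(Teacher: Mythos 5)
Your proposal is correct and follows essentially the same route as the paper's proof: existence by applying \cref{lem:girsanov} (with admissibility verified by the reasoning of \cref{lem:novikov_satisfied}) to the drift-free process $\ell+\sigma\tilde{B}$ so as to \emph{create} the drift, and uniqueness in law by \emph{removing} the drift from an arbitrary weak solution and observing that the resulting Radon--Nikodym density is an explicit measurable functional of the Liouville path, whose law is fixed. The only addition is your explicit consistency/Kolmogorov-extension step to pass from finite horizons to $\R_+$, which the paper leaves implicit.
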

\begin{proof}
  To show the existence of a weak solution to \eqref{eq:innovation_sde} we first notice that 
  \begin{equation*}
    \Upsilon_t\define - \sigma^{-1}\int_0^t b\big(\ell(s)+\sigma\tilde{B}_s\big)\,ds, \qquad t\in[0,T],
  \end{equation*}
  is an admissible fractional drift in the sense of \cref{def:fractional_admissible} up to any time $T>0$. Indeed, this can be shown by the same reasoning as in \cref{lem:novikov_satisfied}. It follows from \cref{lem:girsanov} that
  \begin{equation*}
    \tilde{B}^{\mathbb{Q}_T^\ell}_t\define\tilde{B}_t-\sigma^{-1}\int_0^t b\big(\ell(s)+\sigma\tilde{B}_s\big)\,ds,\qquad t\in[0,T],
  \end{equation*} 
  is a Liouville process under $\mathbb{Q}_T^\ell$ adapted to the natural filtration $(\F_t^{\tilde{B}})_{t\in[0,T]}$ of $\tilde{B}$. Hence, declaring $\Phi(\ell)\define \ell+\sigma\tilde{B}$ the tuple $\big(\Phi(\ell),\tilde{B}^{\mathbb{Q}_T^\ell},\mathbb{Q}_T^\ell,(\F_t^{\tilde{B}})_{t\in[0,T]}\big)$ is a weak solution to \eqref{eq:innovation_sde}. 

  To see that uniqueness in law holds, we let $A\subset\C\big([0,T],\R^n\big)$ be a Borel set and observe that by \cref{lem:girsanov,lem:novikov_satisfied}
  \begin{align*}
    \prob\big(\Phi(\ell)\in A\big)&=\mathbb{E}\bigg[\1_{\big\{\ell+\sigma\tilde{B}\in A\big\}}\exp\bigg(\frac{1}{\varrho_H}\int_0^T\Braket{\sigma^{-1}\cI_+^{\frac12-H} b\big(\ell+\sigma\tilde{B}\big)(t),dW_t} \\
    &\phantom{=\mathbb{E}\bigg[}- \frac{1}{2\varrho_H^2}\int_0^T\Big|\sigma^{-1}\cI_+^{\frac12-H} b\big(\ell+\sigma\tilde{B}\big)(t)\Big|^2\,dt\bigg)\bigg],
  \end{align*}
  where $W$ is a standard Wiener process and $\tilde{B}$ is the Riemann-Liouville process induced by $W$. Since this identity holds for every solution to \eqref{eq:innovation_sde}, uniqueness in law follows.
\end{proof}

While we believe that it is actually possible to show pathwise uniqueness for \eqref{eq:innovation_sde} by the non-linear Young integration developed in \cite{Catellier2016}, the `weak' well-posedness statement of \cref{lem:innovation_sde_well_posed} suffices for our purposes.

\section{The Wiener-Liouville Bridge}\label{sec:Girsanov2}

The representation of the density given in \cref{prop:density_girsanov} is based on a conditional expectation with respect to the terminal value $y$. In view of our objectives, we have to work a bit harder on this conditioning. More specifically, we need to introduce Wiener-Liouville bridges from $0$ to $\x\in\R^n$ in order to produce a more tractable representation of the density. Such Volterra, or more general Gaussian, bridges have been previously studied in the literature, see \cite{Baudoin2007,Gasbarra2007} and references therein. 

\subsection{Basic Properties}

Let 
\begin{equation*}
  \tilde{B}_t=\varrho_H\int_0^t (t-s)^{H-\frac12}\,dW_s
\end{equation*}
be a Liouville process and let $\x\in\R^n$. The regular conditional law $\L\big((W_t)_{t\in[0,T]}\,|\,\tilde{B}_T=\x\big)$ is called the \emph{Wiener-Liouville bridge} from $0$ to $\x$. We shall make use of the following lemma:

\begin{lemma}\label{lem:wiener_conditioned_sde}
  Any weak solution of the path-dependent SDE
  \begin{equation}\label{eq:conditioning_pathdependent_sde}
    dX_t^\x=\frac{2H}{\varrho_H}(T-t)^{-H-\frac12}\left(\x-\varrho_H\int_0^t (T-s)^{H-\frac12}\,dX_{s}^\x\right)\,dt+dW_t,\qquad X_0^\x=0,
  \end{equation}
  satisfies $(X_t^\x)_{t\in[0,T]}\overset{d}{=}\L\big((W_t)_{t\in[0,T]}\,|\,\tilde{B}_T=\x\big)$ for each $\x\in\R^n$.
\end{lemma}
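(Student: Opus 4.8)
The plan is to recognize that conditioning the driving Wiener path on the single Gaussian functional $\tilde B_T=\varrho_H\int_0^T(T-s)^{H-\frac12}\,dW_s$ produces a Gaussian (Volterra) bridge in the sense of \cite{Gasbarra2007}, and that \eqref{eq:conditioning_pathdependent_sde} should be precisely its non-anticipative (adapted) representation. First I would reduce to the scalar case $n=1$: the components of $W$ are independent and $\tilde B_T^{(i)}$ depends only on $W^{(i)}$, so both $\L\big(W\mid\tilde B_T=\x\big)$ and the SDE \eqref{eq:conditioning_pathdependent_sde} decouple over coordinates. Writing $g(s)\define\varrho_H(T-s)^{H-\frac12}$, I introduce the $\prob$-martingale $M_t\define\int_0^t g(s)\,dW_s=\EE[\tilde B_T\mid\F_t]$ and the conditional variance $v_t\define\int_t^T g(s)^2\,ds=\frac{\varrho_H^2}{2H}(T-t)^{2H}=\mathrm{Var}(\tilde B_T\mid\F_t)$.

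The heart of the argument is a Doob $h$-transform. The conditional Gaussian density $p_t(\x)\define(2\pi v_t)^{-\frac12}\exp\bigl(-\tfrac{(\x-M_t)^2}{2v_t}\bigr)$ is a strictly positive $\prob$-martingale, as one checks by a direct Gaussian computation (or by realizing it as the limit of $\EE[\,\cdot\mid\F_t]$ of mollified indicators of $\{\tilde B_T=\x\}$). An application of It\^o's formula, together with the cancellation of the finite-variation part forced by the martingale property, gives $dp_t(\x)/p_t(\x)=\frac{g(t)(\x-M_t)}{v_t}\,dW_t$. Setting $h_t\define p_t(\x)/p_0(\x)$ and $\frac{d\Q^\x}{d\prob}\big|_{\F_t}=h_t$, the standard theory of Gaussian bridges identifies $\Q^\x$ restricted to $\F_t$, $t<T$, with the regular conditional law $\L\big(W\mid\tilde B_T=\x\big)$. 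By the Girsanov theorem (cf.\ \cref{lem:girsanov}), $\tilde W_t\define W_t-\int_0^t\frac{g(s)(\x-M_s)}{v_s}\,ds$ is a $\Q^\x$-Brownian motion, and inserting the explicit expressions for $g$ and $v$ yields
\begin{equation*}
  \frac{g(t)\,(\x-M_t)}{v_t}=\frac{2H}{\varrho_H}(T-t)^{-H-\frac12}\Bigl(\x-\varrho_H\int_0^t(T-s)^{H-\frac12}\,dW_s\Bigr),
\end{equation*}
so that under $\Q^\x$ the process $W$ solves \eqref{eq:conditioning_pathdependent_sde} with $X^\x=W$ driven by $\tilde W$. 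Hence the conditioned law is itself a weak solution.

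To upgrade this to the asserted statement about \emph{any} weak solution, I would prove uniqueness in law via pathwise uniqueness. Given two solutions on a common space driven by the same $W$, their difference $D$ satisfies the deterministic linear Volterra equation $dD_t=-2H(T-t)^{-H-\frac12}N_t\,dt$ with $N_t\define\varrho_H\int_0^t(T-s)^{H-\frac12}\,dD_s$ and $D_0=0$; then $\dot N_t=-\frac{2H}{T-t}N_t$ forces $N_t=C(T-t)^{2H}$, and continuity at $t=0$ with $N_0=0$ gives $C=0$, whence $D\equiv 0$. Pathwise uniqueness together with the weak solution constructed above yields uniqueness in law by Yamada--Watanabe, which is exactly the claim.

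The main obstacle throughout is the singularity of the drift at $t=T$, where $(T-t)^{-H-\frac12}$ blows up. I would circumvent it by first carrying out the $h$-transform and Girsanov step on each interval $[0,T-\epsilon]$—there $v_t$ is bounded away from $0$, $h_{T-\epsilon}$ is a genuine probability density with $\EE[h_{T-\epsilon}]=1$, and the conditional laws are consistent in $\epsilon$ by the tower property—and then letting $\epsilon\to0$, using path continuity of both the conditioned process and the SDE solution to identify the laws on all of $[0,T]$. The delicate point to verify at the endpoint is the integrability $\int_0^T(T-t)^{-H-\frac12}|\x-M_t|\,dt<\infty$; this is precisely the bridge property that, under $\Q^\x$, $M_t\to\x$ at the rate dictated by $v_t\sim(T-t)^{2H}$, which beats the singularity and which I would read off from the explicit Gaussian law of the bridge.
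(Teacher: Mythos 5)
Your proof is correct, and it takes a genuinely different route from the paper's. The paper reduces the statement to the classical Brownian bridge: it introduces the martingale $U_t=\varrho_H\int_0^t(T-s)^{H-\frac12}\,dW_s$ (whose terminal value is $\tilde B_T$), time-changes it through its quadratic variation into a standard Wiener process, recovers $W$ from $U$ via the Volterra map $\phi(Z)_t=\frac{1}{\varrho_H}\int_0^t(T-s)^{\frac12-H}\,dZ_s$, and then pushes the known SDE of the standard Brownian bridge back through $\phi$ and the time change to arrive at \eqref{eq:conditioning_pathdependent_sde}. You instead derive the bridge dynamics from scratch by a Doob $h$-transform: with your $g$, $M_t$, $v_t$, the conditional density $p_t(\x)$ of $\tilde B_T$ given $\F_t$ is an explicit positive martingale, and Girsanov applied to $p_t(\x)/p_0(\x)$ exhibits the conditioned law as a weak solution of \eqref{eq:conditioning_pathdependent_sde} with drift $g(t)(\x-M_t)/v_t$; this is equivalent in substance to the paper's computation, since the $h$-transform is how the Brownian bridge SDE is itself derived. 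Beyond the difference in derivation, your argument supplies a step the paper leaves implicit: the lemma asserts that \emph{any} weak solution has the conditioned law, which requires uniqueness in law for \eqref{eq:conditioning_pathdependent_sde}. In the paper this is silently absorbed (the equation is affine and is in effect solved explicitly in the proof of \cref{cor:condition_wiener}), whereas you prove it cleanly: the difference of two solutions driven by the same noise yields $\dot N_t=-\frac{2H}{T-t}N_t$ with $N_0=0$, hence $N\equiv 0$, and Yamada--Watanabe upgrades pathwise uniqueness plus your weak existence to uniqueness in law. Your handling of the endpoint singularity is also sound: under the bridge law $|\x-M_t|$ is of order $\sqrt{v_t}\sim(T-t)^{H}$, so $\int_0^T(T-t)^{-H-\frac12}|\x-M_t|\,dt<\infty$ almost surely, and consistency of the measures $\Q^\x$ restricted to $\F_{T-\epsilon}$ lets you pass to $\epsilon=0$. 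In short, the paper's reduction is shorter and borrows the bridge SDE off the shelf, while yours is self-contained, makes the uniqueness-in-law step explicit, and extends verbatim to other Gaussian Volterra kernels for which no clean time-change reduction is available.
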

\begin{proof}
  We aim to find a transformation sending $\tilde{B}_T$ to the endpoint of a standard Wiener process $\tilde{W}$ and to subsequently also express $(W_t)_{t\in[0,T]}$ in terms of $\tilde{W}$. To this end, we declare
  \begin{equation*}
    U_t\define\varrho_H\int_0^{t} (T-s)^{H-\frac12}\,dW_s,\qquad t\in[0,T).
  \end{equation*}
  Notice that $U$ is an $L^2$-bounded martingale with limit $\lim_{t\uparrow T} U_t=\tilde{B}_{T}$ and quadratic variation
  \begin{equation*}
    \varsigma(t)\define\braket{U}_t=\varrho_H^2\int_0^t (T-s)^{2H-1}\,ds=\frac{\varrho_H^2}{2H}\Big(T^{2H}-(T-t)^{2H}\Big),\qquad t\in[0,T].
  \end{equation*}
  Set
  \begin{equation*}
    \tilde{W}_t\define U_{\varsigma^{-1}(t)}=\varrho_H\int_0^{\varsigma^{-1}(t)}(T-s)^{H-\frac12}\,dW_s,\qquad t\in[0,\varsigma(T)].
  \end{equation*}
  Notice that, for $t\in[0,T)$,
  \begin{equation*}
    W_t=\frac{1}{\varrho_H}\int_0^t (T-s)^{\frac12-H}\,dU_s,
  \end{equation*}
  which has a limit as $t\uparrow T$. For any continuous semimartingale $(Z_t)_{t\in[0,T]}$ the mapping
  \begin{equation*}
    \phi(Z)_t\define\frac{1}{\varrho_H}\int_0^t (T-s)^{\frac12-H}\,dZ_s,\qquad t\in[0,T],
  \end{equation*}
  has inverse $\phi^{-1}(\bar Z)_t=\varrho_H\int_0^t (T-s)^{H-\frac12}\,d\bar Z_s$ (provided these integrals are well defined). Since $(W, U)=\big(\phi(\tilde{W}\circ\varsigma), \tilde{W}\circ\varsigma\big)$, the conditional distribution of $(W_t)_{t\in[0,T]}$ given $U_T=\x$ equals $\phi\big(Z^\x\circ\varsigma\big)$, where $(Z^\x_t)_{t\in[0,\varsigma(T)]}$ is the standard Wiener bridge from $0$ to $\x$ satisfying
  \begin{equation*}
    dZ_t^\x=\frac{\x-Z_t^\x}{\varsigma(T)-t}\,dt+d\tilde{W}_t,\qquad t\in\big[0,\varsigma(T)\big).
  \end{equation*}
  Observe that
  \begin{equation*}
    dZ_{\varsigma(t)}^\x=2H\frac{\x-Z_{\varsigma(t)}^\x}{T-t}\,dt+\varrho_H (T-t)^{H-\frac12}\,dW_t,\qquad t\in[0,T).
  \end{equation*}
  For $X^\x\define\phi\big(Z^\x\circ\varsigma\big)$ we consequently have
  \begin{equation*}
    X_t^\x=\frac{1}{\varrho_H}\int_0^t(T-s)^{\frac12-H}\,dZ_{\varsigma(s)}^\x=\frac{2H}{\varrho_H}\int_0^t (T-s)^{-H-\frac12}\big(\x-Z_{\varsigma(s)}^\x\big)\,ds+W_t.
  \end{equation*}
  Insert $Z_{\varsigma(t)}^\x=\phi^{-1}(X^\x)_t=\varrho_H\int_0^t (T-s)^{H-\frac12}\,dX_s^\x$ to conclude.
\end{proof}

\begin{corollary}\label{cor:condition_wiener}
  For each $\x\in\R^n$, the semimartingale 
  \begin{equation}\label{eq:wiener_conditioned}
    dX_t^\x=\frac{2H}{\varrho_H}(T-t)^{H-\frac12}\left(\frac{\x}{T^{2H}}-\varrho_H\int_0^t(T-s)^{-H-\frac12}\,dW_s\right)\,dt+dW_t,\quad X_0^\x=0,
  \end{equation}
  satisfies $(X_t^\x)_{t\in[0,T]}\overset{d}{=}\L\big((W_t)_{t\in[0,T]}\,|\,\tilde{B}_T=\x\big)$.
\end{corollary}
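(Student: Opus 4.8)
The plan is to deduce the corollary from \cref{lem:wiener_conditioned_sde} by showing that the two path-dependent SDEs \eqref{eq:conditioning_pathdependent_sde} and \eqref{eq:wiener_conditioned} are equivalent on $[0,T)$; more precisely, that every weak solution of \eqref{eq:wiener_conditioned} is also a weak solution of \eqref{eq:conditioning_pathdependent_sde}. Since the two equations share the same diffusion part $dW_t$, it suffices to check that their drift coefficients coincide. The drift of \eqref{eq:conditioning_pathdependent_sde} is $\frac{2H}{\varrho_H}(T-t)^{-H-\frac12}(\x-V_t)$ with $V_t\define\varrho_H\int_0^t(T-s)^{H-\frac12}\,dX_s^\x$, so the whole task reduces to computing $V_t$ explicitly along a solution of \eqref{eq:wiener_conditioned} and verifying the identity
\begin{equation*}
  \x-V_t=(T-t)^{2H}\left(\frac{\x}{T^{2H}}-\varrho_H\int_0^t(T-s)^{-H-\frac12}\,dW_s\right).
\end{equation*}

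First I would insert the defining equation \eqref{eq:wiener_conditioned} for $dX_s^\x$ into the definition of $V_t$, splitting it into the Gaussian part $U_t\define\varrho_H\int_0^t(T-s)^{H-\frac12}\,dW_s$ and a drift contribution. The $\x$-proportional part of the drift integrates elementarily via $\int_0^t(T-s)^{2H-1}\,ds=\frac{1}{2H}\big(T^{2H}-(T-t)^{2H}\big)$, producing the term $\x\big(1-(T-t)^{2H}/T^{2H}\big)$. For the remaining part, which is a double integral of $(T-s)^{2H-1}$ against $M_s\define\varrho_H\int_0^s(T-r)^{-H-\frac12}\,dW_r$, I would apply the stochastic Fubini theorem, using $\int_r^t(T-s)^{2H-1}\,ds=\frac{1}{2H}\big((T-r)^{2H}-(T-t)^{2H}\big)$ to rewrite it as a single Wiener integral. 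The key bookkeeping observation is that the resulting $(T-r)^{H-\frac12}$-kernel reassembles $-U_t$ and thus cancels the Gaussian part $U_t$ of $V_t$, leaving precisely the claimed closed form. Substituting $\x-V_t$ back into the drift of \eqref{eq:conditioning_pathdependent_sde} and cancelling the factor $(T-t)^{2H}$ against $(T-t)^{-H-\frac12}$ then yields the drift $\frac{2H}{\varrho_H}(T-t)^{H-\frac12}(\cdots)$ of \eqref{eq:wiener_conditioned}, establishing the equivalence.

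An equivalent, and perhaps more conceptual, route is to note that $V_t$ coincides with the time-changed Wiener bridge $Z^\x_{\varsigma(t)}=\phi^{-1}(X^\x)_t$ appearing in the proof of \cref{lem:wiener_conditioned_sde} and hence solves the linear SDE $dV_t=2H(T-t)^{-1}(\x-V_t)\,dt+\varrho_H(T-t)^{H-\frac12}\,dW_t$; solving this with the integrating factor $(T-t)^{-2H}$ gives the same closed form for $\x-V_t$ in one line. Either way, once the drift identity is in hand, \cref{lem:wiener_conditioned_sde} immediately identifies $(X_t^\x)_{t\in[0,T]}$ with the Wiener-Liouville bridge $\L\big((W_t)_{t\in[0,T]}\mid\tilde{B}_T=\x\big)$.

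I expect the only genuine obstacle to be analytic bookkeeping rather than anything conceptual: one must justify the stochastic Fubini interchange and keep track of the singular kernels $(T-s)^{\pm H\mp\frac12}$ near $s=T$. Since all integrals are taken over $[0,t]$ with $t<T$, the integrands are bounded and square-integrable there, so the Gaussian integrals $M_t$ and $U_t$ are well defined and Fubini applies on each $[0,t]$; the blow-up as $t\uparrow T$ is harmless because the SDE is only interpreted on the open interval $[0,T)$, exactly as in \eqref{eq:wiener_conditioned} itself.
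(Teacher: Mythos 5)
Your proposal is correct and follows essentially the same route as the paper: the paper also reduces to \cref{lem:wiener_conditioned_sde} by computing $\varrho_H\int_0^t(T-s)^{H-\frac12}\,dX_s^\x$ (its process $L^\x_t$ is exactly your $V_t$) in closed form---\eqref{eq:integrated_l} is precisely your claimed identity for $\x-V_t$---and then substituting back into \eqref{eq:conditioning_pathdependent_sde}. The only difference is mechanical: the paper obtains the closed form via the It\^o formula applied to \eqref{eq:semimartingale_l}, whereas you obtain it via stochastic Fubini on the double integral, which is an equally valid computation (your secondary ``more conceptual'' route, by contrast, presupposes the identification of $V$ with the time-changed bridge and so should not be leaned on).
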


\begin{proof}
  We derive \eqref{eq:wiener_conditioned} by solving the path-dependent SDE of \cref{lem:wiener_conditioned_sde}. To this end, it is natural to make the ansatz
  \begin{equation}\label{eq:ansatz_x}
    X_t^\x=\frac{1}{\varrho_H}\int_0^t\frac{d L_s^\x}{(T-s)^{H-\frac12}}.
  \end{equation}
  Inserting this into \eqref{eq:conditioning_pathdependent_sde}, we see that $L^\x$ solves
  \begin{equation*}
    dL_t^\x=2H(T-t)^{2H-1}\left(\frac{\x}{T^{2H}}-L_t^\x\right)\,dt+\varrho_H(T-t)^{H-\frac12}\,dW_t
  \end{equation*}
  and it follows from variation of constants that
  \begin{equation}\label{eq:semimartingale_l}
    dL_t^\x=2H(T-t)^{2H-1}\left(\frac{\x}{T^{2H}}-\varrho_H\int_0^t(T-s)^{-H-\frac12}\,dW_s\right)\,dt+\varrho_H (T-t)^{H-\frac12}\,dW_t,\quad L_0^\x=0.
  \end{equation}
  Returning to \eqref{eq:ansatz_x}, we find the semimartingale decomposition \eqref{eq:wiener_conditioned} and it is straight-forward to check that this process actually solves the equation \eqref{eq:conditioning_pathdependent_sde}.
\end{proof}

As a next step, we study the process \eqref{eq:wiener_conditioned} in further detail:
\begin{lemma}\label{lem:conditioning_holder}
  For each $\gamma<\frac12$ and each $T>0$, there are $C>0$ and $\rho_0>0$ such that, for each $\x\in\R^n$, we have
  \begin{equation*}
    \sup_{\rho\leq\rho_0}\Expec{\exp\left(\rho\|X^\x\|_{\C^\gamma([0,T])}^2\right)}\leq e^{C(1+|\x|^2)},
  \end{equation*}
  where $X^\x$ is defined in \eqref{eq:wiener_conditioned}.
\end{lemma}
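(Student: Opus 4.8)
The plan is to reduce the estimate to Fernique's theorem (\cref{prop:fernique}) by splitting $X^\x$ into a deterministic part that is linear in $\x$ and a centered Gaussian part that does not depend on $\x$. By \cref{cor:condition_wiener} the process $X^\x$ has the law of $W=(W_t)_{t\in[0,T]}$ conditioned on $\{\tilde B_T=\x\}$. Since $\big(W,\tilde B_T\big)$ is a jointly centered Gaussian pair whose structure is diagonal across coordinates, Gaussian regression shows that this conditional law is that of $\tilde X+g\,\x$, where
\begin{equation*}
  g_t\define\frac{2H}{\varrho_H T^{2H}}\int_0^t(T-s)^{H-\frac12}\,ds,\qquad \tilde X_t\define W_t-g_t\,\tilde B_T,
\end{equation*}
$g$ being a scalar deterministic function and $\tilde X$ a centered Gaussian process not depending on $\x$. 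As the statement concerns only the law of $X^\x$, I will work with this representation throughout. Writing $\|\cdot\|_{\C^\gamma}$ for the norm on $\C^\gamma([0,T])$ and using the triangle inequality together with $(a+b)^2\le 2a^2+2b^2$, I obtain
\begin{equation*}
  \Expec{\exp\big(\rho\|X^\x\|_{\C^\gamma}^2\big)}\le \exp\big(2\rho\|g\|_{\C^\gamma}^2|\x|^2\big)\,\Expec{\exp\big(2\rho\|\tilde X\|_{\C^\gamma}^2\big)}.
\end{equation*}

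First I would control the deterministic factor. Integrating gives $g_t=\frac{2H}{\varrho_H T^{2H}(H+\frac12)}\big(T^{H+\frac12}-(T-t)^{H+\frac12}\big)$, and since $t\mapsto(T-t)^{H+\frac12}$ is $\big((H+\tfrac12)\wedge 1\big)$-H\"older on $[0,T]$ with $(H+\tfrac12)\wedge 1\ge\tfrac12>\gamma$, the function $g$ lies in $\C^\gamma([0,T])$ with a finite norm $\|g\|_{\C^\gamma}=C(H,T,\gamma)$. Hence the first factor is bounded by $e^{C|\x|^2}$ with $C$ independent of $\x$, and it remains to bound $\Expec{\exp(2\rho\|\tilde X\|_{\C^\gamma}^2)}$ uniformly in $\x$ (recall $\tilde X$ does not depend on $\x$).

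To apply Fernique's theorem I must verify that $\L(\tilde X)$ is a centered Gaussian measure on a separable Banach space carrying the $\C^\gamma$-norm, i.e.\ that $\tilde X$ has sample paths of the required regularity. Fix $\gamma'\in(\gamma,\tfrac12)$. From $\tilde X_t-\tilde X_s=(W_t-W_s)-(g_t-g_s)\tilde B_T$ and $\var(\tilde B_T)=\varrho_H^2 T^{2H}/(2H)$ I would estimate
\begin{equation*}
  \Expec{|\tilde X_t-\tilde X_s|^2}\lesssim \Expec{|W_t-W_s|^2}+(g_t-g_s)^2\var(\tilde B_T)\lesssim |t-s|+|t-s|^{2((H+\frac12)\wedge 1)}\lesssim |t-s|^{2\gamma'},
\end{equation*}
using $|t-s|\le|t-s|^{2\gamma'}$ on $[0,T]$ since $2\gamma'<1$. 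Because $\tilde X$ is Gaussian this $L^2$-bound upgrades to all $L^p$-moments, and Kolmogorov's continuity criterion yields that $\tilde X$ is a.s.\ H\"older continuous of every order strictly below $\gamma'$; picking such an order in $(\gamma,\gamma')$ places $\tilde X$ in the separable little-H\"older space (the closure of smooth functions in $\C^\gamma([0,T])$). Its law is therefore a centered Gaussian measure on a separable Banach space, and since $\tilde X$ is a.s.\ finite in $\C^\gamma$ its Fernique quantile $\zeta$ is finite.

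Finally I would invoke \cref{prop:fernique}: there is $\rho_0>0$ (depending on $\zeta$, hence not on $\x$) with $\sup_{\rho\le\rho_0}\Expec{\exp(2\rho\|\tilde X\|_{\C^\gamma}^2)}\le C(\zeta)<\infty$. Combining this with the bound $e^{C|\x|^2}$ on the deterministic factor and absorbing $\log C(\zeta)$ and $2\rho_0\|g\|_{\C^\gamma}^2$ into a single constant gives the claim. The main obstacle is the terminal-time singularity of the kernel $(T-s)^{-H-\frac12}$ in the drift of \eqref{eq:wiener_conditioned}: estimating increments directly from the exploding stochastic integral $\int_0^\cdot(T-s)^{-H-\frac12}\,dW_s$ yields only a logarithmically divergent bound near $T$. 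Recognizing the regression representation $\tilde X=W-g\,\tilde B_T$ is precisely what sidesteps this difficulty, as it trades the singular integral for a single well-behaved Gaussian variable multiplied by the regular deterministic function $g$.
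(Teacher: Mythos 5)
Your proof is correct, and it takes a genuinely different route from the paper's. The paper argues directly from the semimartingale representation \eqref{eq:wiener_conditioned}: the drift splits into a deterministic part linear in $\x$ (producing the $e^{C|\x|^2}$ factor) and the singular stochastic integral $\int_0^t(T-s)^{-H-\frac12}\,dW_s$, which is tamed by the weighted process $\mathfrak{G}_t\define(T-t)^{H-\frac12+1-\gamma}\int_0^t(T-s)^{-H-\frac12}\,dW_s$; since $\int_s^t(T-r)^{\gamma-1}\,dr\leq\gamma^{-1}|t-s|^{\gamma}$, the $\C^\gamma$-norm of the stochastic part of the drift is controlled by $\|\mathfrak{G}\|_\infty$, and the bound $\sup_{t\in[0,T]}\Expec{|\mathfrak{G}_t|^2}<\infty$ lets \cref{prop:fernique} apply to it (and to $\|W\|_{\C^\gamma}$). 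You instead work purely at the level of laws: by \cref{cor:condition_wiener}, $X^\x$ is the Wiener--Liouville bridge, and Gaussian regression gives $X^\x\overset{d}{=}\tilde X+g\,\x$ with $\tilde X=W-g\,\tilde{B}_T$ independent of $\x$, so the singular kernel never appears and a single application of \cref{prop:fernique} on the little-H\"older space suffices. Your route is self-contained and makes the $\x$-dependence transparent: $g_t\x$ is exactly the conditional mean, and $g_t=D_\x X_t^\x$ is the same function that appears in the proof of \cref{prop:smooth}. The paper's device, in turn, is tailored to the semimartingale form that the rest of \cref{sec:Girsanov2} genuinely needs (the Girsanov functional integrates against $dX^\x$, and weighted singular integrals such as $\mathfrak{H}^T$ reappear in the proof of \cref{prop:lower_bound_conditioned_process}), so the weight trick gets amortized across several proofs. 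Two small points to tighten: you should record the one-line covariance check $\mathrm{Cov}(W_t,\tilde{B}_T)=g_t\var(\tilde{B}_T)$, which is what certifies that $g$ is the regression coefficient and that $\tilde X$ is independent of $\tilde{B}_T$ (independence, not merely ``not depending on $\x$'', is what legitimizes the conditional-law identity); and your closing remark overstates the obstacle in the direct approach---the paper's choice of weight shows the singular integral can be handled head-on, with no divergence, logarithmic or otherwise.
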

\begin{proof}
  It is enough to show that the Gaussian process $\mathfrak{G}_t\define(T-t)^{H-\frac12+1-\gamma}\int_0^t(T-s)^{-H-\frac12}\,dW_s$, $t\in[0,T]$, satisfies $\Expec{e^{\rho\|\mathfrak{G}\|_{\infty}^2}}<\infty$. This is however a simple consequence of \cref{prop:fernique} and the fact that $\sup_{t\in[0,T]}\Expec{|\mathfrak{G}_t|^2}<\infty$.
\end{proof}

\subsection{Wiener-Liouville Bridge Representation of the Conditional Density}

With the help of $X^\x$, we can give a novel representation of the density $p_T(\ell;\cdot)$ of the solution to \eqref{eq:innovation_sde}. To this end, we need the following technical lemma.

\begin{lemma}\label{lem:l2_bound_l}
  Assume that $b:\R^n\to\R^n$ satisfies \ref{cond:h}. Let $T>0$ and let $\alpha>1-\frac{1}{2H}$ be such that $b\in\C^\alpha(\R^n,\R^n)$ if $H>\frac12$ and $\alpha=1$ otherwise. Then there is a constant $C>0$ such that, for each $(\ell,\x)\in\Cloc^{H-}(\R_,\R^n)\times\R^n$, the process
  \begin{equation}\label{eq:definition_l}
    \mathfrak{L}^{\ell,\x}_t\define(\varrho_H\sigma)^{-1}\left(\cI_+^{\frac12-H} b\Big(\ell+\varrho_H\sigma\cI_+^{H-\frac12}X^\x\Big)\right)(t),\qquad t\in[0,T],
  \end{equation}
  where $X^\x$ is the semimartingale defined in \eqref{eq:wiener_conditioned}, satisfies the estimate
  \begin{equation}\label{eq:sup_norm_l}
    \big\|\fL^{\ell,\x}\big\|_\infty\lesssim \begin{cases}
    T^{\frac12-H}\Big(1+|\x|+\vertiii{\ell}_{\C^\gamma}+T^\gamma\|X^\x\|_{\C^{\gamma-H+\frac12}}\Big), & H<\frac12, \\
    T^{\gamma}\Big(1+|\x|+\vertiii{\ell}_{\C^{\frac{\gamma}{\alpha}}}+\|X^\x\|_{\C^{\frac{\gamma}{\alpha}-H+\frac12}}\Big), & H>\frac12,
    \end{cases}
  \end{equation}
  for any $\gamma\in\big(0\vee(H-\frac12),\alpha H\big)$.
  In particular, there is a $\bar{\gamma}<H$ such that, for each $T>0$,
  \begin{equation}\label{eq:l2_bound_l}
   \Expec{\int_0^T\big|\mathfrak{L}^{\ell,\x}_t\big|^2\,dt}^{\frac12}\lesssim 1+|\x|+\vertiii{\ell}_{\C^{\bar{\gamma}}}
  \end{equation}
  for each $(\ell,\x)\in\Cloc^{H-}(\R_+,\R^n)\times\R^n$.
\end{lemma}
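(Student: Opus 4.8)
The plan is to track regularity through the three operations making up $\fL^{\ell,\x}$ --- the inner fractional operator $\cI_+^{H-1/2}$ applied to the bridge $X^\x$, the composition with $b$, and the outer operator $\cI_+^{1/2-H}$ --- and to split the argument according to the sign of $H-\tfrac12$, just as in the proof of \cref{lem:novikov_satisfied}. Because the two operators have opposite-sign orders, in each regime exactly one of them is a genuine fractional integral and the other a fractional derivative, and the H\"older mapping bounds of \cref{lem:holder_fractional_integral} and \cref{lem:holder_fractional_derivative} (which shift the H\"older exponent by $\pm(H-\tfrac12)$) are the workhorses. A preliminary observation I would record is that $X^\x_0=0$, so that the H\"older seminorm already dominates the supremum, $\|X^\x\|_\infty\le T^\beta\|X^\x\|_{\C^\beta}$; this is what lets the supremum of $\cI_+^{H-1/2}X^\x$ be controlled by the seminorm of $X^\x$ alone, with the stated power of $T$ in front. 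The additive $|\x|$ on the right-hand side of \eqref{eq:sup_norm_l} only weakens the bound, so I may freely drop it.

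For $H<\tfrac12$ the inner operator $\cI_+^{H-1/2}=\cI_+^{-(1/2-H)}$ is a fractional derivative and the outer one a fractional integral. Fixing $\gamma\in(0,H)$ and $\beta=\gamma-H+\tfrac12$, the choice is admissible: $\beta>\tfrac12-H$ keeps $X^\x$ above the differentiation threshold, while $\beta<\tfrac12$ guarantees $X^\x\in\C^\beta$ via \cref{lem:conditioning_holder}. The Marchaud representation of $\cI_+^{H-1/2}$ together with $X^\x_0=0$ then gives $\|\cI_+^{H-1/2}X^\x\|_\infty\lesssim T^\gamma\|X^\x\|_{\C^\beta}$, so that $\Phi:=\ell+\sigma\cI_+^{H-1/2}X^\x$ obeys $\|\Phi\|_\infty\lesssim\vertiii{\ell}_{\C^\gamma}+T^\gamma\|X^\x\|_{\C^\beta}$. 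Since $b$ has at most linear growth by assumption \ref{cond:h}, $\|b(\Phi)\|_\infty\lesssim 1+\|\Phi\|_\infty$, and a direct kernel estimate for the fractional integral of order $\tfrac12-H$ closes the first line of \eqref{eq:sup_norm_l} through $\|\fL^{\ell,\x}\|_\infty\lesssim T^{1/2-H}\|b(\Phi)\|_\infty$.

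For $H>\tfrac12$ the roles reverse, and I would exploit that $b\in\C^\alpha$ with $\alpha>1-\tfrac1{2H}$. Taking $\gamma\in(H-\tfrac12,\alpha H)$ and $\beta=\gamma/\alpha-H+\tfrac12\in(0,\tfrac12)$, \cref{lem:holder_fractional_integral} lands $\cI_+^{H-1/2}X^\x$ in $\C^{\gamma/\alpha}$, whence $\Phi\in\C^{\gamma/\alpha}$ with $\vertiii{\Phi}_{\C^{\gamma/\alpha}}\lesssim\vertiii{\ell}_{\C^{\gamma/\alpha}}+\|X^\x\|_{\C^\beta}$. As $\gamma/\alpha<H<1$, the composition is licit and $\|b(\Phi)\|_{\C^\gamma}\lesssim\vertiii{\Phi}_{\C^{\gamma/\alpha}}^\alpha$. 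Feeding this into the outer fractional derivative via \cref{lem:holder_fractional_derivative} (exactly as in \cref{lem:novikov_satisfied}) yields $\|\fL^{\ell,\x}\|_\infty\lesssim T^\gamma\|b(\Phi)\|_{\C^\gamma}$, and the elementary inequality $a^\alpha\le 1+a$ (valid since $\alpha\le 1$) linearizes the $\alpha$-th power into the second line of \eqref{eq:sup_norm_l}.

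For the $L^2$ bound \eqref{eq:l2_bound_l} I would fix one such $\gamma$ and set $\bar\gamma:=\gamma$ (resp.\ $\gamma/\alpha$), which is $<H$ in either regime. Estimating $\int_0^T|\fL^{\ell,\x}_t|^2\,dt\le T\|\fL^{\ell,\x}\|_\infty^2$, taking expectations and inserting \eqref{eq:sup_norm_l}, the only random factor is $\|X^\x\|_{\C^\beta}$, and the exponential-moment bound of \cref{lem:conditioning_holder} furnishes $\Expec{\|X^\x\|_{\C^\beta}^2}\lesssim 1+|\x|^2$. This yields the asserted polynomial (in fact at most quadratic) control in $|\x|$ and $\vertiii{\ell}_{\C^{\bar\gamma}}$. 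I expect the principal difficulty to be the bookkeeping of the regularity budget rather than any single estimate: the exponent $\gamma$ must be squeezed into a window --- $0\vee(H-\tfrac12)<\gamma<\alpha H$ with $\gamma/\alpha<1$ --- narrow enough that simultaneously the inner operator keeps $X^\x$ between the differentiation threshold and the $\tfrac12$-regularity ceiling of the bridge, the merely $\C^\alpha$ (or linear-growth measurable) drift can be composed, and the outer operator remains applicable with the correct power of $T$. The secondary delicate point is the behavior at $t=0$ of the fractional-derivative step, which I would control through $X^\x_0=0$ and the use of H\"older rather than supremum norms.
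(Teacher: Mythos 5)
Your proposal is correct and follows essentially the same route as the paper's own proof: the same case split in $H$, the same use of \cref{lem:holder_fractional_integral,lem:holder_fractional_derivative} to shift H\"older exponents by $\pm\big(H-\tfrac12\big)$ (with the Marchaud representation being only a cosmetic variant of the cited derivative bound), the same composition estimate $\|b(\Phi)\|_{\C^\gamma}\lesssim\|\Phi\|_{\C^{\gamma/\alpha}}^\alpha$ with the linearization $a^\alpha\le 1+a$, and the same appeal to \cref{lem:conditioning_holder} for the $L^2$ bound. Your remark that the expectation in \eqref{eq:l2_bound_l} comes out quadratic rather than linear in $|\x|$ and $\vertiii{\ell}_{\C^{\bar\gamma}}$ is consistent with how the paper itself later uses this bound (quadratically, in \cref{prop:lower_bound_conditioned_process}), so that discrepancy lies in the statement rather than in your argument.
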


\begin{proof}
  We split the cases of small and large Hurst parameters. If $H<\frac12$, then the estimate $\big\|\cI_+^{\frac12-H} f\big\|_\infty\lesssim T^{\frac12-H}\|f\|_\infty$ and the most linear growth of $b$ imply 
  \begin{equation*}
    \big\|\fL^{\ell,\x}\big\|_\infty\lesssim T^{\frac12-H}\Big(1+\|\ell\|_\infty+\big\|\cI_+^{H-\frac12}X^\x\big\|_\infty\Big).
  \end{equation*}
  By \cref{lem:holder_fractional_derivative} the latter term can bounded by the $\C^{\gamma}$-norm of $X^\x$ for any $\gamma\in(0,H)$ which gives \eqref{eq:sup_norm_l}.
 
  For $H>\frac12$ fix $\gamma\in\big(H-\frac12,\alpha H\big)$ and use \cref{lem:holder_fractional_derivative} to estimate
  \begin{equation}\label{eq:holdest2}
    \big\|\fL^{\ell,\x}\big\|_\infty\lesssim T^{\gamma}\|b\|_{\C^\alpha}\left(\|\ell\|_{\C^{\frac{\gamma}{\alpha}}}^\alpha+\Big\|\cI_+^{H-\frac12}X^\x\Big\|_{\C^{\frac{\gamma}{\alpha}}}^\alpha\right)\lesssim T^{\gamma}\Big(1+\|\ell\|_{\C^{\frac{\gamma}{\alpha}}}+\|X^\x\|_{\C^{\frac{\gamma}{\alpha}-H+\frac12}}\Big),
  \end{equation}
  where the last step follows from \cref{lem:holder_fractional_integral}.
  
  Finally, the $L^2$-bound immediately follows \cref{lem:conditioning_holder}.
\end{proof}

\begin{proposition}\label{prop:repres_psi} 
Assume that $b:\R^n\to\R^n$ satisfies \ref{cond:h}. Let $(\ell,y)\in\Cloc^{H-}(\R_+,\R^n)\times\R^n$ and set $\x\define \sigma^{-1}\big(y-\ell(T)\big)$. Let $\Psi_T$ and $\mathfrak{L}^{\ell,\x}$ be defined by \eqref{eq:psi_girsanov} and \eqref{eq:definition_l}, respectively. Then we have
\begin{equation}\label{eq:Psicontinue} 
  \Psi_T(\ell;y)=\Expec{\exp\left(\int_0^T\big\langle\mathfrak{L}^{\ell,\x}_t,dX_t^\x\big\rangle-\frac{1}{2}\int_0^T\big|\mathfrak{L}^{\ell,\x}_t\big|^2\,dt\right)}.
\end{equation}
\end{proposition}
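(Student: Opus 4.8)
The starting point is the representation of $\Psi_T(\ell;y)$ from \cref{prop:density_girsanov}: it is the conditional expectation of the Dol\'eans--Dade exponential $\exp\big(\int_0^T\braket{\fK_t^\ell,dW_t}-\tfrac12\int_0^T|\fK_t^\ell|^2\,dt\big)$ given the event $\{\tilde{B}_T=\x\}$, where $\x=y-\ell(T)$. The plan is to trade this conditioning for an honest (unconditional) expectation by replacing the conditioned driving Wiener process with the explicit bridge semimartingale $X^\x$ furnished by \cref{cor:condition_wiener}. The integrand $\fK^\ell$ depends on $W$ only through the pathwise operation $\tilde{B}=\varrho_H\cI_+^{H-\frac12}W$, followed by composition with $b$ and application of $\cI_+^{\frac12-H}$; consequently, substituting $X^\x$ for $W$ throughout carries $\fK^\ell$ into the process $\mathfrak{L}^{\ell,\x}$ of \eqref{eq:definition_l}, the two being defined by the very same fractional-calculus recipe applied to the respective driving paths.

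Concretely, by \cref{cor:condition_wiener} (which rests on \cref{lem:wiener_conditioned_sde}) the regular conditional law $\L\big((W_t)_{t\in[0,T]}\,|\,\tilde{B}_T=\x\big)$ coincides with the law of the semimartingale $X^\x$ solving \eqref{eq:wiener_conditioned}; hence $\Expec{\Phi(W)\,|\,\tilde{B}_T=\x}=\Expec{\Phi(X^\x)}$ for every bounded measurable path functional $\Phi$. The only difficulty is that the exponent contains the It\^o integral $\int_0^T\braket{\fK_t^\ell,dW_t}$, which is not a pathwise-defined functional, so the substitution cannot be performed verbatim. First I would approximate $\fK^\ell$ by its left-point, piecewise-constant samplings along a sequence of partitions with vanishing mesh. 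For these simple integrands the stochastic integral reduces to the genuinely measurable functional $\sum_i\braket{\fK_{t_i}^\ell(W),W_{t_{i+1}}-W_{t_i}}$, to which the identity of laws applies directly: its conditional law given $\{\tilde{B}_T=\x\}$ equals the unconditional law of $\sum_i\braket{\mathfrak{L}_{t_i}^{\ell,\x}(X^\x),X^\x_{t_{i+1}}-X^\x_{t_i}}$.

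It then remains to pass to the limit on both sides. Under the Wiener (resp.\ conditional) measure the left-point sums converge in probability to $\int_0^T\braket{\fK_t^\ell,dW_t}$; on the bridge side, since $X^\x$ is a continuous semimartingale, the same left-point sums converge to the semimartingale integral $\int_0^T\braket{\mathfrak{L}_t^{\ell,\x},dX_t^\x}$, which automatically absorbs the bridge drift appearing in \eqref{eq:wiener_conditioned}. The quadratic terms converge by the $L^2$-bound of \cref{lem:l2_bound_l}. Finally, I would carry the convergence through the exponential and the expectation by a uniform-integrability argument: the admissibility established in \cref{lem:novikov_satisfied} makes the left-hand exponentials a uniformly integrable family, whereas on the bridge side the exponential moment bound of \cref{lem:conditioning_holder} together with \eqref{eq:l2_bound_l} bounds $\Expec{\exp(\,\cdot\,)}$ uniformly along the partitions.

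The hard part is exactly this last transfer of the It\^o integral across the conditioning. Because $\{\tilde{B}_T=\x\}$ is a null event, the bridge law is singular with respect to Wiener measure, so there is no Radon--Nikodym density available to push the stochastic integral forward; one genuinely has to realize the integral as an a.s.\ limit of Riemann sums controlled simultaneously under both measures. The exponential integrability of \cref{lem:conditioning_holder} and the $L^2$-control \eqref{eq:l2_bound_l}---tailored for precisely this purpose---are what legitimize the interchange of limit, exponential, and (conditional) expectation and thereby close the argument.
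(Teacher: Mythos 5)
Your skeleton is exactly the paper's proof: \cref{prop:density_girsanov} for the conditional-expectation form of $\Psi_T$, \cref{lem:wiener_conditioned_sde}/\cref{cor:condition_wiener} to identify the conditional law of $W$ given $\tilde{B}_T=\x$ with the bridge $X^\x$, and a Riemann-sum approximation to carry the It\^o integral across the conditioning (the paper compresses all of this into ``direct consequence of \cref{prop:density_girsanov} and \cref{lem:wiener_conditioned_sde,lem:l2_bound_l}''). The gap is in your final limit interchange. On the bridge side you claim that \cref{lem:conditioning_holder} together with \eqref{eq:l2_bound_l} bounds $\Expec{\exp(\cdot)}$ uniformly along the partitions. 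This is not available for arbitrary $T$: \cref{lem:conditioning_holder} only provides exponential moments of $\|X^\x\|_{\C^\gamma}^2$ of some \emph{small} order $\rho\leq\rho_0$, whereas uniform integrability of the exponentials of the Riemann sums requires exponential moments of $\int_0^T|\fL_t^{\ell,\x}|^2\,dt$ (and of the drift contribution against $K^\x$) with a coefficient of order one. This is precisely why the paper's own \cref{lem:exponential_bound_l} and \cref{prop:lower_bound_conditioned_process}\ref{it:conditioned_density_2} are restricted to $T\leq T_0$; for large $T$ those exponential moments can be infinite, so your dominated-convergence/UI step fails, while the proposition is asserted for every $T>0$. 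The Wiener side has the same issue in milder form: admissibility gives exponential moments of some small order $\rho>0$, which yields the martingale property of $\E\big(\fK^\ell\bigcdot W\big)$ but not, by itself, an $L^{1+\epsilon}$ bound on the approximating exponentials.

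The repair stays within your framework but routes all convergence of expectations through the Wiener side, where no large-coefficient moments are needed. Take $E_n\define\E\big(\fK^{(n)}\bigcdot W\big)_T$ with the sampled integrand appearing in \emph{both} the stochastic integral and the compensator; since $\sup_t\Expec{e^{\rho|\fK^{(n)}_t|^2}}\leq\sup_t\Expec{e^{\rho|\fK_t|^2}}<\infty$, each $E_n$ is a true martingale, so $\Expec{E_n}=1=\Expec{E}$ with $E\define\E\big(\fK^\ell\bigcdot W\big)_T$. Combined with a.s.\ convergence along a subsequence (the sampled compensators converge pathwise by continuity of $t\mapsto\fK_t^\ell$), Scheff\'e's lemma upgrades this to convergence in $L^1(\prob)$; this, not a moment bound, is the correct route to uniform integrability here. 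Now disintegrate: $\Expec{|E_n-E|}=\int\Expec{|E_n-E|\,\big|\,\tilde{B}_T=\x}\,\L(\tilde{B}_T)(d\x)\to 0$ gives, along a further subsequence and for a.e.\ $\x$, conditional $L^1$ convergence of $E_n$ to $E$. On the bridge side the same measurable functionals evaluated at $X^\x$ converge \emph{in probability} to the semimartingale exponential $\exp\big(\int_0^T\langle\fL_t^{\ell,\x},dX_t^\x\rangle-\frac{1}{2}\int_0^T|\fL_t^{\ell,\x}|^2\,dt\big)$---this part of your argument is fine and needs only \cref{lem:l2_bound_l}. The two limits therefore coincide a.s., and $\Expec{E\,\big|\,\tilde{B}_T=\x}$ equals the right-hand side of \eqref{eq:Psicontinue} for a.e.\ $\x$, which is the meaningful content of the proposition since the left-hand side is itself defined only via regular conditional probabilities. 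No bridge-side exponential moment bounds enter at all.
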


\begin{proof}
  This is a direct consequence of \cref{prop:density_girsanov} and \cref{lem:wiener_conditioned_sde,lem:l2_bound_l}.
\end{proof}

\subsection{Gaussian Tail Estimates for the Conditional Density}

With the representation given in \cref{prop:repres_psi} we are now ready to establish Gaussian tail estimates of $p_T(\ell;\cdot)$, which will form a central part in the proofs of the main theorems.

\begin{lemma}\label{lem:exponential_bound_l}
  Assume \ref{cond:h} and $\ell\in\Cloc^{H-}\big(\R_+,\R^n\big)$. Let $\rho>0$. Then there are $\beta<H$, $T_0>0$ as well as $c>0$ depending only on $\rho$ and the drift $b$ such that
  \begin{equation*}
    \sup_{T\leq T_0}\Expec{\exp\left(\rho\int_0^T\big|\fL_t^{\ell,\x}\big|^2\,dt\right)}\lesssim e^{cT_0\big(1+|\x|^2+\vertiii{\ell}^2_{\C^\beta}\big)}.
  \end{equation*}
\end{lemma}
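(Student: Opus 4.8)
The plan is to reduce the exponential moment of the time-integral to the sup-norm bound on $\fL^{\ell,\x}$ furnished by \cref{lem:l2_bound_l}, and then to separate the deterministic dependence on $\x$ and $\ell$ from the random dependence on the Wiener--Liouville bridge $X^\x$. First I would use the crude estimate $\int_0^T|\fL_t^{\ell,\x}|^2\,dt\leq T\,\|\fL^{\ell,\x}\|_\infty^2$ together with \eqref{eq:sup_norm_l}. In both regimes the resulting bound carries an overall factor $T^{1+2p}$ with $p>0$ (namely $p=\tfrac12-H$ when $H<\tfrac12$ and $p=\gamma$ when $H>\tfrac12$), so that $1+2p>1$; after restricting to $T\leq T_0\leq1$ every such prefactor is therefore $\leq T_0$. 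This already yields the desired $T_0$-linear rate for the contributions of $1$ and $\vertiii{\ell}^2_{\C^\beta}$, where $\beta=\gamma$ (resp. $\beta=\gamma/\alpha$) lies strictly below $H$ by the admissible range of $\gamma$ in \cref{lem:l2_bound_l}.

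Next I would exploit that the bridge $X^\x$ of \eqref{eq:wiener_conditioned} depends affinely on $\x$: reading off the $\x$-linear part of its drift one has $X^\x=X^0+\x\,h$ for an explicit deterministic function $h$, and a direct computation gives $\|h\|_{\C^\delta}\lesssim T^{-\gamma}$ (resp. $T^{-\gamma/\alpha}$) for the relevant H\"older exponent $\delta<\tfrac12$. Splitting $\|X^\x\|_{\C^\delta}\leq\|X^0\|_{\C^\delta}+|\x|\,\|h\|_{\C^\delta}$ isolates the centred Gaussian part $X^0$ from a purely deterministic $\x$-term. Consequently the exponential of the integral factorises into a deterministic factor, collecting the terms in $1$, $|\x|^2$ and $\vertiii{\ell}^2_{\C^\beta}$, and a random factor depending only on $\|X^0\|_{\C^\delta}$. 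For the latter I would invoke the Fernique-type estimate \cref{lem:conditioning_holder} applied to $X^0$: the coefficient multiplying $\|X^0\|_{\C^\delta}^2$ is $\lesssim\rho\,T^{\theta}$ for some $\theta>0$, so choosing $T_0$ small (depending only on $\rho$ and $b$) pushes it below the admissible threshold $\rho_0$ and bounds the random factor by a constant. A short verification that the relevant variances $\sup_t\Expec{|X_t^0|^2}$ stay bounded as $T\downarrow0$ by Brownian scaling guarantees that $\rho_0$ does not degenerate and the bound is genuinely uniform over $T\leq T_0$.

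The main obstacle is matching the coefficient of $|\x|^2$ to the required $T_0$-scale, since $\|h\|_{\C^\delta}$ blows up as $T\downarrow0$ and can offset the prefactor. When $H<\tfrac12$ (so $\alpha=1$) this resolves directly: the $\x$-contribution carries $T^{2-2H+2\gamma}\|h\|_{\C^\delta}^2\sim T^{2-2H}$, and $2-2H\geq1$ gives $\lesssim T_0|\x|^2$. When $H>\tfrac12$ and the H\"older exponent $\alpha$ from \ref{cond:h} may be strictly less than $1$, this cancellation is insufficient; here I would instead retain the exponent $\alpha$ in \eqref{eq:holdest2}, keeping $\|X^\x\|_{\C^\delta}^\alpha$ rather than linearising. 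Using subadditivity of $t\mapsto t^\alpha$ in the split above, the $\x$-contribution to $\int_0^T|\fL|^2$ becomes $\lesssim T^{1+2\gamma}|\x|^{2\alpha}\|h\|_{\C^\delta}^{2\alpha}\sim T\,|\x|^{2\alpha}$, and Young's inequality $|\x|^{2\alpha}\leq1+|\x|^2$ converts this into $\lesssim T_0(1+|\x|^2)$.

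Assembling the pieces, the deterministic factor is bounded by $\exp\!\big(cT_0(1+|\x|^2+\vertiii{\ell}^2_{\C^\beta})\big)$ and the random factor by a constant, which yields the claim with $\beta<H$ and with $T_0,c$ depending only on $\rho$ and $b$. The only genuinely delicate points are the two $T$-power bookkeepings just described and the uniformity of the Fernique threshold as $T\downarrow0$; all remaining steps are routine applications of \cref{lem:conditioning_holder,lem:l2_bound_l} and Young's inequality.
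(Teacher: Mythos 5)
Your proof is correct and follows the same route as the paper, whose entire proof is the one-liner ``this follows from \eqref{eq:sup_norm_l} and \cref{lem:conditioning_holder}'': bound $\int_0^T|\fL^{\ell,\x}_t|^2\,dt$ by $T\|\fL^{\ell,\x}\|_\infty^2$, invoke \eqref{eq:sup_norm_l}, and control the bridge norm via the Fernique-type estimate. Your additional bookkeeping---the affine split $X^\x=X^0+\x\,h$ with $\vertiii{h}_{\C^\delta}\lesssim T^{-\gamma}$ (resp.\ $T^{-\gamma/\alpha}$), the scaling argument making the Fernique threshold uniform over $T\leq T_0$ (the bridge itself changes with $T$), and retaining the power $\alpha$ when $H>\frac12$---is precisely what is needed for the cited lemmas to yield the stated $T_0$-prefactor on $|\x|^2$, since a black-box application of \cref{lem:conditioning_holder} only produces a $T_0$-independent coefficient $C(1+|\x|^2)$ there; in other words, you have filled in what the paper's proof leaves implicit.
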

\begin{proof}
  This follows from \eqref{eq:sup_norm_l} and \cref{lem:conditioning_holder}.
\end{proof}

Let $(M_t)_{t\geq 0}$ be a local martingale. Recall that the \emph{Dol\'eans-Dade exponential} of $M$ is defined by
\begin{equation*}
  \E(M)_t\define\exp\left(M_t-\frac12\braket{M}_t\right),\qquad t\geq 0.
\end{equation*}
It is well known to satisfy the following moment bound
\begin{equation}\label{eq:ui_doleans}
  \Expec{\E(M)_T^p}\leq\sqrt{\Expec{\exp\big(p(2p-1)\braket{M}_T\big)}},
\end{equation}
which readily follows from an application of Cauchy-Schwarz to the identity
\begin{equation*}
  \E(M)_T^p=\sqrt{\E(2pM)_T} \sqrt{e^{p(2p-1)\braket{M}_T}}
\end{equation*}
and the fact that $\E(2pM)$ is a super-martingale.

\begin{proposition}\label{prop:lower_bound_conditioned_process}
  Assume that the drift satisfies \ref{cond:h}. Then there is a $\gamma<H$ such that the following hold: 
  \begin{enumerate}
    \item\label{it:conditioned_density_1} For any $T_0>0$, there exist constants $c,C>0$ such that, for each $\ell\in\Cloc^{H-}(\R_+,\R^n)$, the density of $\Phi_T(\ell)$ admits the lower bound 
      \begin{equation*}
        p_T(\ell;y)\geq \frac{C e^{-c\vertiii{\ell}^2_{\C^\gamma}}}{T^{nH}}\exp\left(-\frac{c\big|y-\ell(T)\big|^2}{T^{2H}}\right) \qquad\forall\,y\in\R^n,
      \end{equation*}
      for all $T\in(0,T_0]$.

    \item\label{it:conditioned_density_2} There exist $T_0>0$ and constants $c, C,\eta>0$ such that, for each $\ell\in\Cloc^{H-}\big(\R_+,\R^n\big)$, the density of $\Phi_T(\ell)$ admits the upper bound 
    \begin{equation}\label{eq:upper_bound_conditional_density}
      p_T(\ell;y)\leq \frac{e^{C(1+T^\eta\vertiii{\ell}_{\C^\gamma}^2)}}{T^{nH}}\exp\bigg(-\frac{c\big|y-\ell(T)\big|^2}{T^{2H}}\bigg) \qquad\forall\,y\in\R^n,
    \end{equation}
    for all $T\in(0, T_0]$.
  \end{enumerate}
\end{proposition}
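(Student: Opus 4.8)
The plan is to feed the Wiener--Liouville representation \eqref{eq:Psicontinue} of $\Psi_T(\ell;y)$ into the Girsanov formula \eqref{eq:density_girsanov}. Since $\sigma$ is invertible we have $|\sigma^{-1}\x|\asymp|\x|$ for $\x=y-\ell(T)$, so the announced Gaussian factor $\exp(-c|y-\ell(T)|^2/T^{2H})$ is already supplied by the explicit prefactor, and the entire task reduces to a two-sided control of $\Psi_T$. The first step is to split the semimartingale integral in the exponent of \eqref{eq:Psicontinue} along the decomposition \eqref{eq:wiener_conditioned} of the bridge, writing $\int_0^T\langle\mathfrak{L}^{\ell,\x}_t,dX_t^\x\rangle=N_T+D$, where $N_t=\int_0^t\langle\mathfrak{L}^{\ell,\x}_s,dW_s\rangle$ is the martingale part driven by the Brownian motion $W$ underlying $X^\x$ (with $\langle N\rangle_T=\int_0^T|\mathfrak{L}^{\ell,\x}_t|^2\,dt$) and $D$ collects the bounded-variation drift of $X^\x$. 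With this splitting the exponent becomes $\log\E(N)_T+D$, so that $\Psi_T(\ell;y)=\mathbb{E}\bigl[\E(N)_T\,e^{D}\bigr]$. Note that $\mathfrak{L}^{\ell,\x}$ is bounded by \eqref{eq:sup_norm_l}, so $N$ is a genuine $L^2$-martingale.

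For the upper bound I would apply Cauchy--Schwarz to separate the two factors. The Dol\'eans--Dade factor is handled by \eqref{eq:ui_doleans}, which reduces $\mathbb{E}\bigl[\E(N)_T^2\bigr]$ to an exponential moment of $\langle N\rangle_T=\int_0^T|\mathfrak{L}^{\ell,\x}_t|^2\,dt$; this is precisely what \cref{lem:exponential_bound_l} provides. To obtain the optimal power $T^\eta$ in front of $\vertiii{\ell}_{\C^\gamma}^2$ I would instead track the $T$-dependence directly through the sup-norm bound \eqref{eq:sup_norm_l} combined with the Gaussian tails of $\|X^\x\|_{\C^\gamma}$ from \cref{lem:conditioning_holder}: since $\ell$ enters \eqref{eq:sup_norm_l} only through a deterministic factor carrying a strictly positive power of $T$ (namely $T^{\frac12-H}$ if $H<\frac12$, resp.\ $T^\gamma$ if $H>\frac12$), the resulting coefficient of $\vertiii{\ell}_{\C^\gamma}^2$ is of the desired form $T^\eta$, whereas the coefficient of $|\x|^2$ stays $O(1)$, which is harmless against the prefactor's $|\x|^2/T^{2H}$ for $T$ small.

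The genuinely delicate piece, and the main obstacle, is the drift term $D$. The bridge drift rate in \eqref{eq:wiener_conditioned} involves the singular kernel $(T-t)^{H-\frac12}$ together with $\int_0^t(T-s)^{-H-\frac12}\,dW_s$, whose variance blows up like $(T-t)^{-2H}$ as $t\uparrow T$. The key is to rewrite $\int_0^t(T-s)^{-H-\frac12}\,dW_s=(T-t)^{-(H+\frac12-\gamma)}\mathfrak{G}_t$ in terms of the \emph{bounded} Gaussian process $\mathfrak{G}$ of \cref{lem:conditioning_holder}; then the two integrands $(T-t)^{H-\frac12}$ and $(T-t)^{\gamma-1}$ appearing in the drift are Lebesgue-integrable on $[0,T]$, and an absolute estimate yields
\[
  |D|\lesssim\bigl\|\mathfrak{L}^{\ell,\x}\bigr\|_\infty\bigl(T^{\frac12-H}|\x|+T^{\gamma}\|\mathfrak{G}\|_\infty\bigr).
\]
Inserting \eqref{eq:sup_norm_l} for $\|\mathfrak{L}^{\ell,\x}\|_\infty$ and invoking the Gaussian tail bounds of \cref{lem:conditioning_holder} through repeated Cauchy--Schwarz, one controls $\mathbb{E}[e^{2D}]$ by $\exp$ of a quantity that is again $O(1)\cdot|\x|^2+T^\eta\vertiii{\ell}_{\C^\gamma}^2+O(1)$, with only positive powers of $T$ multiplying the dangerous cross-terms.

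Finally I would assemble the pieces. Multiplying the $\Psi_T$-bound by the explicit prefactor and using Young's inequality to absorb every term linear in $|\x|$ into $\tfrac12|\sigma^{-1}\x|^2/(\varrho_H^2T^{2H})$ plus an $O(1)$ remainder, I choose $T_0$ small enough that the $O(1)$ coefficient of $|\x|^2$ coming from $\Psi_T$ is dominated by $\lambda_{\min}\bigl((\sigma\sigma^\top)^{-1}\bigr)/(\varrho_H^2T^{2H})$ uniformly over $T\in(0,T_0]$; this leaves a net decay $\exp(-c|\x|^2/T^{2H})$ and the prefactor $e^{C(1+T^\eta\vertiii{\ell}_{\C^\gamma}^2)}$, which is the upper bound \eqref{eq:upper_bound_conditional_density}. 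For the lower bound I would instead use Jensen's inequality, $\Psi_T\geq\exp\bigl(\mathbb{E}[\log\E(N)_T+D]\bigr)=\exp\bigl(-\tfrac12\mathbb{E}[\langle N\rangle_T]+\mathbb{E}[D]\bigr)$, where $\mathbb{E}[N_T]=0$ as $N$ is a true martingale. Here $\mathbb{E}[\langle N\rangle_T]$ is bounded by the $L^2$-estimate \eqref{eq:l2_bound_l} and $|\mathbb{E}[D]|\leq\mathbb{E}[|D|]$ by the same drift estimate as above; as these bounds are at most linear in $|\x|$ and $\vertiii{\ell}_{\C^\gamma}$ (plus a $T^{1-2H}|\x|^2$ remainder with a positive power of $T$), Young's inequality converts them into $C\,e^{-c\vertiii{\ell}_{\C^\gamma}^2}$ times an absorbable $|\x|^2/T^{2H}$ loss, yielding the lower bound.
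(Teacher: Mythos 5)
Your proposal is correct and follows essentially the same route as the paper's proof: both reduce the claim to a two-sided bound on $\Psi_T$ via \cref{prop:density_girsanov}, split the bridge integral in \eqref{eq:Psicontinue} into the martingale part $N$ and the finite-variation part $D$, obtain the lower bound from Jensen's inequality together with the vanishing mean of $N$ and the estimates of \cref{lem:l2_bound_l}, and obtain the upper bound from Cauchy--Schwarz, the moment bound \eqref{eq:ui_doleans} combined with \cref{lem:exponential_bound_l}, and absorption of the resulting $|\x|^2$-contributions into the Gaussian prefactor after shrinking $T_0$. If anything, your treatment of the bridge drift---rewriting $\int_0^t(T-s)^{-H-\frac12}\,dW_s$ through the bounded Gaussian process $\mathfrak{G}$ of \cref{lem:conditioning_holder}, so that only the integrable kernels $(T-t)^{H-\frac12}$ and $(T-t)^{\gamma-1}$ remain---is a slightly more careful rendering of the step the paper carries out with the shorthand process $\mathfrak{H}^T$.
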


\begin{remark}
  It is worth noting that part \ref{it:conditioned_density_1} of \cref{prop:lower_bound_conditioned_process} holds for \emph{any} positive time, whereas \ref{it:conditioned_density_2} only holds on a sufficiently small time interval. This has no consequences for our study of the stationary density since, in view of \cref{prop:integral_density}, we can choose \emph{any} $T_0>0$. However, it poses an additional challenge in the proof of \cref{thm:lower_bound_tindel}, which we overcome in \cref{sec:proof_tindel}.
\end{remark}

\begin{proof}[Proof of \cref{prop:lower_bound_conditioned_process}]
  \ref{it:conditioned_density_1} By \eqref{eq:density_girsanov} it is enough to lower-bound the expression $\Psi_T(\ell;y)$ given in \cref{prop:repres_psi}. We deduce from this representation and Jensen's inequality that 
 \begin{equation*}
    \Psi_T(\ell;y)\geq\exp\left(\Expec{\int_0^T\big\langle\mathfrak{L}^{\ell,\x}_t,dX_t^\x\big\rangle-\frac{1}{2}\int_0^{T}\big|\mathfrak{L}^{\ell,\x}_t\big|^2\,dt}\right),
  \end{equation*}
  where we recall that $\x=\sigma^{-1}\big(y-\ell(T)\big)$. Since $\int_0^\cdot \big\langle\mathfrak{L}^{z,w,\x}_t,dW_t\big\rangle$ is a martingale by virtue of \cref{lem:l2_bound_l}, abbreviating the finite variation part of $X^\x$ by
  \begin{equation}\label{eq:definition_k}
    K_t^\x\define\frac{2H}{\varrho_H}(T-t)^{H-\frac12}\left(\frac{\x}{T^{2H}}-\int_0^t (T-s)^{-H-\frac12}\,dW_s\right),\qquad t\in[0,T],
  \end{equation}
  see \cref{cor:condition_wiener}, we see that there is a constant $c>0$ such that the lower bound
  \begin{equation}\label{eq:lower_bound_psi_l}
     \Psi_T(\ell;y)\geq\exp\left(-c\,\Expec{\int_0^T\big|\mathfrak{L}^{\ell,\x}_t\big|\big|K_t^\x\big|\,dt+\frac12\int_0^T\big|\mathfrak{L}^{\ell,\x}_t\big|^2\,dt}\right)
   \end{equation}
   holds. Notice that there are $\eta\in(0,1)$, $\bar{\eta}\in(1,\infty)$, $\beta<\frac12$, and $\gamma<H$ such that
  \begin{align}
    \int_0^T\big|\mathfrak{L}^{\ell,\x}_t\big|\big|K^\x_t\big|\,dt&\lesssim \big\|\fL^{\ell,\x}\big\|_\infty\Big(T^{\frac12-H}|\x| + T\|\mathfrak{H}^T\|_\infty\Big) \nonumber\\
    &\lesssim \big(T^\eta+T^{\bar{\eta}}\big)\left(1+|\x|^{2}+\vertiii{\ell}_{\C^\gamma}+\|X^\x\|_{\C^\beta}\right)\big(1+\|\mathfrak{H}^T\|_\infty\big), \label{eq:bound_product_integral}
  \end{align}
  where $\mathfrak{H}_t^T\define (T-t)^{H-\frac12}\int_0^t (T-s)^{-H-\frac12}\,dW_s$ and the last step follows from \cref{lem:l2_bound_l}. By \cref{prop:fernique,lem:conditioning_holder} the norms $\|X^\x\|_{\C^\beta}$ and $\|\mathfrak{H}^T\|_\infty$ have Gaussian tails uniformly in $T\in(0,T_0]$, whence the expectation of the above integral is proportional to $1+|\x|^{2}+\vertiii{\ell}_{\C^\gamma}$. Combining this with the $L^2$-bound \eqref{eq:l2_bound_l} we further bound \eqref{eq:lower_bound_psi_l}:
  \begin{equation*}
    \Psi_T(\ell;y)\geq\exp\left(-c\left(|\x|^2+\vertiii{\ell}_{\C^\gamma}^2\right)\right)
  \end{equation*}
  for a potentially increased $c>0$. Inserting this in the expression from \cref{prop:density_girsanov}, we obtain
  \begin{equation*}
    p_T(\ell;y)\geq \frac{C e^{-c\vertiii{\ell}^2_{\C^\gamma}}}{T^{nH}}\exp\left(-\frac{c|\x|^2}{T^{2H}}-c|\x|^2\right)\geq \frac{C e^{-c\vertiii{\ell}^2_{\C^\gamma}}}{T^{nH}}\exp\left(-\frac{c(1+T_0^{2H})|\x|^2}{T^{2H}}\right)
  \end{equation*}
  for all $y\in\R^n$ and the proof of the first statement is complete.\\

  \ref{it:conditioned_density_2} It is enough to assume $T_0<1$. We upper-bound $\Psi_T(\ell;y)$ uniformly in $T\in(0,T_0]$: Recall the definition \eqref{eq:definition_k}. It follows from \cref{lem:l2_bound_l}, \eqref{eq:ui_doleans}, and the Cauchy-Schwarz inequality that
  \begin{equation}\label{eq:psi_upper_bound}
  \Psi_T(\ell;y)\leq \Expec{\exp\left(2\int_0^T\big|\mathfrak{L}^{\ell,\x}_t\big|\big|K^\x_t\big|\,dt\right)}^{\frac{1}{2}}\Expec{\exp\left(6\int_0^T\big|\mathfrak{L}^{\ell,\x}_t\big|^2\,dt\right)}^{\frac{1}{4}} 
  \end{equation}
  Since $T<1$ by assumption, the bound \eqref{eq:bound_product_integral} reduces to
  \begin{equation*}
    \int_0^T\big|\mathfrak{L}^{\ell,\x}_t\big|\big|K^\x_t\big|\,dt\lesssim T^\eta\left(1+|\x|+\vertiii{\ell}_\gamma+\|X^\x\|_{\C^\beta}\right)\big(1+\|\mathfrak{H}\|_\infty\big).
  \end{equation*}
  Hence, applying \cref{lem:conditioning_holder} and a Cauchy-Schwarz argument we find
  \begin{equation}\label{eq:upperbound_cauchy}
    \Expec{\exp\left(2\int_0^T\big|\mathfrak{L}^{\ell,\x}_t\big|\big|K^\x_t\big|\,dt\right)}\lesssim\exp\left(cT^\eta \left(|\x|^2+\vertiii{\ell}_{\C^\gamma}^2\right)\right)
  \end{equation}
  for some $c>0$, provided we choose $T_0>0$ sufficiently small. Inserting this back into \eqref{eq:psi_upper_bound}, the proof is completed by appealing to \cref{lem:exponential_bound_l,prop:repres_psi}. In fact, the term $e^{c T^\eta |\x|^2}\leq e^{2c T^\eta (|y|^2 + \vertiii{\ell}_{\C^\gamma}^2)}$ can be absorbed into the other factors in the bound \eqref{eq:upper_bound_conditional_density} (upon potentially decreasing $T_0>0$) and the proof is complete.
\end{proof}

Next we prove that the mapping $y\mapsto\Psi_T(\ell;y)$ is differentiable under appropriate regularity assumptions on the drift $b$. To this end, we first establish a differentiability result on the expectation of Dol\'eans-Dade exponentials of parameter-dependent semimartingales. For convenience we shall use a common abbreviation of the stochastic integral: $f\bigcdot W=\int_0^\cdot f_t\,dW_t$.

\begin{lemma}\label{lem:parameter_doleans_dade}
  Let $f:\R^m\times\R_+\times\Omega\to\R$ be continuous in the first two arguments and suppose that the following hold:
  \begin{itemize}
    \item $f(\cdot,t)\in\C^k(\R^m)$ for each $t\geq 0$ and, for each $\balpha\in\N_0^m$ with $|\balpha|\leq k$, the mapping $(\xi,t)\mapsto\partial_\xi f(\xi,t)$ is continuous and
    \begin{equation*}
      \int_0^T\Expec{\big|\partial_\xi f(\xi,t)\big|^2}\,dt<\infty
    \end{equation*}
    locally uniform in $\xi$.
    
    \item There is a $\rho>6$ such that $\Expec{\exp\left(\rho\int_0^T \big|f (\xi,t)\big|^2\,dt\right)}<\infty$ and $\big\|\big(\partial_\xi^{\balpha} f(\xi,\cdot)\bigcdot W\big)_T\big\|_{L^p}<\infty$ locally uniform in $\xi$ for each $p\geq 1$ and each $\balpha\in\N_0^m$ with $|\balpha|\leq k$.
    
    \item Let $Z:\R^m\times\Omega\to\R$ be a random variable for which $\xi\mapsto Z(\xi,\cdot)$ is in $\C^k(\R^m)$ almost surely and the derivatives are measurable. Assume also that $\Expec{\exp\big(\rho Z(\xi)\big)}<\infty$ for some $\rho>2$ and $\big\|\partial_\xi^{\balpha} Z(\xi)\big\|_{L^p}<\infty$ for each $p\geq 1$ and each $\balpha\in\N_0^m$ with $|\balpha|\leq k$, both locally uniform in $\xi$.
   \end{itemize}  
   Then the mapping $\xi\mapsto\Expec{\E\big(f(\xi,\cdot)\bigcdot W\big)_Te^{Z(\xi)}}$ is $\C^k\big(\R^m)$. 

   Moreover, there is a polynomial $\mathfrak{p}$ such that 
  \begin{align*}
    &\phantom{=}\partial_\xi^{\balpha}\Expec{\E\big(f(\xi,\cdot)\bigcdot W\big)_Te^{Z(\xi)}}\\
    &=\Expec{\E\big(f(\xi,\cdot)\bigcdot W\big)_T e^{Z(\xi)}\mathfrak{p}\Big(\Big(\big(\partial_\xi^{\balpha^\prime}f(\xi,\cdot)\bigcdot W\big)_T,\partial_\xi^{\balpha^\prime}Z(\xi)\Big)_{\balpha^\prime\preccurlyeq\balpha}\Big)}.
  \end{align*}
\end{lemma}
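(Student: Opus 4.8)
The plan is to show that $g(\xi,\omega)\define\E\big(f(\xi,\cdot)\bigcdot W\big)_T\,e^{Z(\xi)}$ is almost surely $\C^k$ in $\xi$, to identify its pathwise derivatives, and then to differentiate under the expectation by checking that each derivative family is locally uniformly integrable. Writing $M^\xi\define f(\xi,\cdot)\bigcdot W$ and $\Lambda(\xi)\define M^\xi_T-\f12\braket{M^\xi}_T+Z(\xi)$, we have $g=e^{\Lambda}$, so by Fa\`a di Bruno's formula $\partial_\xi^{\balpha}g=e^{\Lambda}\,\mathfrak{p}_{\balpha}\big((\partial_\xi^{\balpha'}\Lambda)_{\mathbf 0\neq\balpha'\preccurlyeq\balpha}\big)$ for a universal (Bell) polynomial $\mathfrak{p}_{\balpha}$. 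The derivatives of $\Lambda$ are $\partial_\xi^{\balpha'}M^\xi_T=\big(\partial_\xi^{\balpha'}f(\xi,\cdot)\bigcdot W\big)_T$, the parameter derivative $\partial_\xi^{\balpha'}Z(\xi)$, and the contributions of the compensator $\braket{M^\xi}_T=\int_0^T|f(\xi,t)|^2\,dt$, whose derivatives are the Lebesgue integrals $\int_0^T\partial_\xi^{\bbeta}f(\xi,t)\cdot\partial_\xi^{\bbeta'}f(\xi,t)\,dt$. Collecting these into $\mathfrak{p}_{\balpha}$ yields the asserted formula, the compensator terms being generated by the same integrands as the stochastic-integral arguments.

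The main obstacle is the first claim, namely that $\xi\mapsto M^\xi_T$ admits a version that is jointly measurable and almost surely $\C^k$ in $\xi$ with $\partial_\xi^{\balpha'}M^\xi_T=\big(\partial_\xi^{\balpha'}f\bigcdot W\big)_T$. Here one cannot argue pathwise, since the stochastic integral is only defined up to a $\xi$-dependent null set. The standard remedy is a Kolmogorov continuity argument: using the assumed local-uniform bounds $\big\|\big(\partial_\xi^{\balpha'}f\bigcdot W\big)_T\big\|_{L^p}<\infty$ together with the Burkholder--Davis--Gundy inequality, one controls the $L^p$-moments of the $\xi$-increments of the family $\xi\mapsto\big(\partial_\xi^{\balpha'}f\bigcdot W\big)_T$ for $|\balpha'|\le k$; choosing $p$ large relative to the parameter dimension $m$ (Sobolev embedding in $\xi$) produces a jointly continuous modification. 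Identifying the derivative is then elementary: continuity of $(\xi,t)\mapsto\partial_\xi f(\xi,t)$ together with $\int_0^T\EE|\partial_\xi f(\xi,t)|^2\,dt<\infty$ gives $L^2$-differentiability of $\xi\mapsto M^\xi_T$ with $L^2$-derivative $\big(\partial_\xi f\bigcdot W\big)_T$ via the It\^o isometry, and these $L^2$-derivatives must coincide with the pathwise derivatives of the continuous modification. Iterating over $|\balpha'|\le k$ assembles an almost surely $\C^k$ map with the stated derivatives.

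It then remains to differentiate under $\EE$. By the standard criterion it suffices that the pathwise derivatives $\partial_\xi^{\balpha}g$ are continuous in $\xi$ and that, for each $\balpha$ with $|\balpha|\le k$, the family $\{\partial_\xi^{\balpha}g(\xi):\xi\in V\}$ is uniformly integrable on every relatively compact $V$; a local-uniform $L^{1+\delta}$-bound furnishes exactly this. I would obtain such a bound by Hölder's inequality applied to the three factors of $\partial_\xi^{\balpha}g=\E(M^\xi)_T\,e^{Z(\xi)}\,\mathfrak{p}_{\balpha}(\cdots)$, and the budget is precisely the one the hypotheses are tailored to. By \eqref{eq:ui_doleans} and $\rho>6$ one has $\E(M^\xi)_T\in L^{p_1}$ for some $p_1>2$ locally uniformly in $\xi$ (since $p_1(2p_1-1)<\rho$ is compatible with $p_1>2$); the bound $\EE[e^{\rho Z}]<\infty$ with $\rho>2$ gives $e^{Z(\xi)}\in L^{p_2}$ for some $p_2>2$; and the polynomial factor lies in every $L^{q}$, its stochastic-integral arguments by assumption and its compensator arguments $\int_0^T\partial_\xi^{\bbeta}f\cdot\partial_\xi^{\bbeta'}f\,dt$ by Cauchy--Schwarz in $t$ followed by BDG and Doob's inequality, which bound their moments by those of $\big(\partial_\xi^{\bbeta}f\bigcdot W\big)_T$. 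Since $\tfrac1{p_1}+\tfrac1{p_2}<1$ there is room left for the polynomial factor, so $\partial_\xi^{\balpha}g$ is bounded in $L^{1+\delta}$ locally uniformly in $\xi$. Applying the criterion (inductively in $|\balpha|$, or all at once) yields that $\xi\mapsto\EE[g(\xi)]$ is $\C^k$ with $\partial_\xi^{\balpha}\EE[g]=\EE[\partial_\xi^{\balpha}g]$, which is the claimed formula.
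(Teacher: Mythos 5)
There is a genuine gap, and it sits exactly where your proof does its main work: the claim that $\xi\mapsto\big(f(\xi,\cdot)\bigcdot W\big)_T$ admits an almost surely $\C^k$ modification. Your Kolmogorov argument needs, for every $\balpha^\prime$ with $|\balpha^\prime|\leq k$, an increment estimate of the form $\Expec{\big|\big(\partial_\xi^{\balpha^\prime}f(\xi,\cdot)\bigcdot W\big)_T-\big(\partial_\xi^{\balpha^\prime}f(\xi^\prime,\cdot)\bigcdot W\big)_T\big|^p}\lesssim|\xi-\xi^\prime|^{m+\varepsilon}$. For $|\balpha^\prime|\leq k-1$ this does follow from the mean value theorem, Jensen, BDG, and the assumed $L^p$-bounds at order $|\balpha^\prime|+1$. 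But at the top order $|\balpha^\prime|=k$ the hypotheses provide only joint \emph{continuity} of $(\xi,t)\mapsto\partial_\xi^{\balpha^\prime}f(\xi,t)$ together with moment bounds at each fixed $\xi$ --- no modulus of continuity in $\xi$ at all --- so there is nothing to feed into Kolmogorov. This is not a repairable technicality within your scheme: already for deterministic $f$, the map $\xi\mapsto\big(\partial_\xi^{\balpha^\prime}f(\xi,\cdot)\bigcdot W\big)_T$ is a centered Gaussian field whose covariance is continuous under the stated hypotheses, and such fields are well known to admit, in general, \emph{no} continuous modification (Dudley-type entropy conditions can fail). So the intermediate claim ``a.s.\ $\C^k$'' is not only unproved but can be false, and your criterion for differentiating under the expectation --- which requires pathwise derivatives at order $k$ --- cannot be invoked for the outermost derivative. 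A secondary gap of the same flavour: identifying the derivative via ``It\^o-isometry $L^2$-differentiability'' requires $\partial_\xi f(\xi^\prime,\cdot)\to\partial_\xi f(\xi,\cdot)$ in $L^2(dt\times d\prob)$, i.e.\ uniform integrability of $\big\{|\partial_\xi f(\xi^\prime,\cdot)|^2\big\}$ near $\xi$; the assumed \emph{local uniform finiteness} of $\int_0^T\Expec{|\partial_\xi f(\xi^\prime,t)|^2}\,dt$ gives boundedness, not uniform integrability.

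The repair is to lower the ambition from almost sure regularity to regularity \emph{in probability}, which is all the conclusion needs, since it concerns the deterministic function $\xi\mapsto\Expec{\cdot}$. The difference quotient of $\big(f(\xi,\cdot)\bigcdot W\big)_T$ is the stochastic integral of $\int_0^1\partial_\xi f(\xi+sh,t)\,ds$, which converges pointwise to $\partial_\xi f(\xi,t)$ and is dominated by the continuous adapted (hence locally bounded) process $t\mapsto\sup_{\xi^\prime\in\overline{V}}\big|\partial_\xi f(\xi^\prime,t)\big|$; the dominated convergence theorem for stochastic integrals (see \cite[(2.12) Theorem]{Revuz1999}) then gives convergence in probability of the quotients to $\big(\partial_\xi f(\xi,\cdot)\bigcdot W\big)_T$, with no moment-modulus in $\xi$ required, and the same argument applies at every order up to $k$. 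Differentiability in probability of the product $\E\big(f(\xi,\cdot)\bigcdot W\big)_Te^{Z(\xi)}$ follows, with the derivative you computed via Fa\`a di Bruno. At that point the third paragraph of your proposal --- the H\"older/uniform-integrability budget using \eqref{eq:ui_doleans}, $\rho>6$ for the exponential factor and $\rho>2$ for $e^{Z}$ --- is correct and is precisely what upgrades convergence in probability of the difference quotients to convergence of their expectations (and gives continuity of the resulting derivatives). This is exactly the paper's argument; your architecture survives once the pathwise $\C^k$ claim is replaced by differentiability in probability.
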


\begin{proof}
  Fix $\xi_0\in\R^m$. Notice that $\xi\mapsto \big(f(\xi,\cdot)\bigcdot W\big)_T$ is differentiable at $\xi_0$ in probability with derivative $\big(\partial_\xi^{\balpha}f(\xi_0,\cdot)\bigcdot W\big)_T$. This is an easy consequence of standard stability results for stochastic integrals, see \emph{e.g.} \cite[(2.12) Theorem]{Revuz1999}. Hence, $\xi\mapsto\E\big(f(\xi,\cdot)\bigcdot W\big)_Te^{Z(\xi)}$ is differentiable in probability. Since $\E\big(f(\xi,\cdot)\bigcdot W\big)_T e^{Z(\xi)}\mathfrak{p}\Big(\big(\big(\partial_\xi^{\balpha^\prime}f(\xi,\cdot)\bigcdot W\big)_T,\partial_\xi^{\balpha^\prime}Z(\xi)\big)_{\balpha^\prime\preccurlyeq\balpha}\Big)$ is uniformly integrable over a neighborhood of $\xi$ by H\"older's inequality, \eqref{eq:ui_doleans}, and the Burkholder-Davis-Gundy inequality, the expectation of the differential quotient converges to the expectation of the derivative.
\end{proof}

\begin{proposition}\label{prop:smooth}
  Assume that $b$ satisfies \ref{cond:sk}. Then there is a time $T_0>0$ such that, for each $\ell\in\Cloc^{H-}(\R_+,\R^n)$, the map $y\mapsto p_T(\ell;y)$ is $\C^k$ on $\mathbb{R}^n$. Furthermore, there are $\eta>0$ and $\gamma<H$ such that, for any $\balpha\in\N_0^n$ with $|\balpha|\leq k$, we can find constants $c,C>0$ independent of $\ell$ such that
  \begin{equation*}
    \Big|\partial^{\balpha}_y p_T(\ell;y)\Big|\leq e^{C(1+T^\eta\vertiii{\ell}_{\C^\gamma}^2)}\exp\Big(-c\big|y-\ell(T)\big|^2\Big)\qquad\forall\,y\in\R^n
  \end{equation*}
  for all $T\in(0,T_0]$.
\end{proposition}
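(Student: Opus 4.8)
The plan is to differentiate the Wiener--Liouville bridge representation \eqref{eq:Psicontinue} of $\Psi_T(\ell;y)$ in the endpoint $y$ with the help of \cref{lem:parameter_doleans_dade}, and then to transfer the resulting bounds to $p_T(\ell;\cdot)$ through the Girsanov formula \eqref{eq:density_girsanov}. First I would introduce the parameter $\xi\define\x=y-\ell(T)$ (so that $\partial_y=\partial_\xi$) and split the exponent in \eqref{eq:Psicontinue} along the semimartingale decomposition $X^\x=K^\x+W$ of \cref{cor:condition_wiener}, where $K^\x$ is the finite-variation part \eqref{eq:definition_k}. Writing $M^\x\define\mathfrak{L}^{\ell,\x}\bigcdot W$, this yields
\begin{equation*}
  \Psi_T(\ell;y)=\Expec{\E\big(M^\x\big)_T\,e^{Z^\x}},\qquad Z^\x\define\int_0^T\big\langle\mathfrak{L}^{\ell,\x}_t,dK_t^\x\big\rangle,
\end{equation*}
which is precisely the structure treated in \cref{lem:parameter_doleans_dade}. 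The crucial simplification is that $X^\x$ --- and hence $\cI_+^{H-\frac12}X^\x$, the argument $\ell+\sigma\cI_+^{H-\frac12}X^\x$ of $b$, and $K^\x$ --- depends \emph{affinely} on $\xi$, with $\partial_\xi X^\x$ a deterministic function of $t$ and $\partial_\xi^{\balpha}X^\x=0$ for $|\balpha|\geq 2$.

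Next I would verify the three hypotheses of \cref{lem:parameter_doleans_dade} for $f(\xi,t)=\mathfrak{L}^{\ell,\x}_t$ and $Z(\xi)=Z^\x$. Smoothness of $\xi\mapsto\mathfrak{L}^{\ell,\x}_t$ up to order $k$ follows from $b\in\C^k$ and the affine inner dependence: since $\cI_+^{\frac12-H}$ is linear and the inner map is affine, Faà di Bruno collapses and $\partial_\xi^{\balpha}\mathfrak{L}^{\ell,\x}$ is again of the form $\varrho_H\sigma^{-1}\cI_+^{\frac12-H}\big(D^{|\balpha|}b(\ell+\sigma\cI_+^{H-\frac12}X^\x)[\cdots]\big)$, the directions being deterministic functions of $t$. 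The required $L^2$-, $L^p$- and exponential-moment bounds, locally uniform in $\xi$, are then obtained by reproducing the sup-norm estimate \eqref{eq:sup_norm_l} for these higher derivatives: for $H<\frac12$ one uses $\big\|\cI_+^{\frac12-H}g\big\|_\infty\lesssim T^{\frac12-H}\|g\|_\infty$ and controls $\big\|D^{|\balpha|}b(\ell+\sigma\cI_+^{H-\frac12}X^\x)\big\|_\infty$ via the exponential growth bound \eqref{eq:growth_sk_1}, whereas for $H>\frac12$ the top order $|\balpha|=k$ requires the Hölder variant, estimating $\big\|D^kb(\ell+\sigma\cI_+^{H-\frac12}X^\x)\big\|_{\C^\gamma}$ by means of \eqref{eq:growth_sk_2} and \cref{lem:holder_fractional_derivative}, exactly as in the proof of \cref{lem:l2_bound_l}. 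In either case the resulting random prefactors are of the form $e^{C(1+\vertiii{\ell}_{\C^\gamma}+\|X^\x\|_{\C^\beta})}$ and hence possess all exponential-of-linear moments by \cref{lem:conditioning_holder}; combined with \cref{lem:exponential_bound_l} (applied with $\rho>6$, which fixes $T_0$) this furnishes the exponential integrability of $\int_0^T|f|^2$ and the $L^p$-bounds on $\big(\partial_\xi^{\balpha}f\bigcdot W\big)_T$. The analogous bounds for $Z^\x$ and its derivatives follow since $\partial_\xi^{\balpha}Z^\x$ pairs derivatives of $\mathfrak{L}^{\ell,\x}$ with $K^\x$ (or its single nonvanishing $\xi$-derivative), which is handled by the product estimate \eqref{eq:bound_product_integral} as in \cref{prop:lower_bound_conditioned_process}. \cref{lem:parameter_doleans_dade} then yields $y\mapsto\Psi_T(\ell;y)\in\C^k$ together with its derivative formula, and multiplying by the smooth Gaussian prefactor of \eqref{eq:density_girsanov} shows $y\mapsto p_T(\ell;y)\in\C^k$.

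Finally I would derive the Gaussian-type bound on $\partial_y^{\balpha}p_T$. By Leibniz' rule applied to the product in \eqref{eq:density_girsanov}, $\partial_y^{\balpha}p_T$ is a sum of terms in which derivatives of the Gaussian prefactor produce polynomial factors in $(y-\ell(T))/T^{2H}$ times the same Gaussian, and derivatives of $\Psi_T$ are given by \cref{lem:parameter_doleans_dade}. Each factor $\partial_y^{\bbeta}\Psi_T=\Expec{\E(M^\x)_T\,e^{Z^\x}\mathfrak{p}(\cdots)}$ is estimated by Hölder's inequality, bounding $\big\|\E(M^\x)_T\big\|_{L^{p_1}}$ through \eqref{eq:ui_doleans} and \cref{lem:exponential_bound_l}, $\big\|e^{Z^\x}\big\|_{L^{p_2}}$ through the product estimate, and $\big\|\mathfrak{p}(\cdots)\big\|_{L^{p_3}}$ through the Burkholder--Davis--Gundy inequality together with the exponential moments of $\|X^\x\|_{\C^\beta}$ from \cref{lem:conditioning_holder}. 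The outcome is a prefactor of the form $e^{C(1+T^\eta\vertiii{\ell}_{\C^\gamma}^2)}$ multiplied by a polynomial in $|\x|$ and a factor $e^{cT^\eta|\x|^2}$ with bounded coefficient; upon shrinking $T_0$, all of this $\x$-dependence is dominated by the leading Gaussian $\exp\big(-|\sigma^{-1}\x|^2/(\varrho_H^2 T^{2H})\big)$, precisely as in the concluding step of the proof of part \ref{it:conditioned_density_2} of \cref{prop:lower_bound_conditioned_process}.

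The main obstacle is the verification of the locally-uniform integrability hypotheses of \cref{lem:parameter_doleans_dade} at the top order $|\balpha|=k$. For $H>\frac12$ one must carry the Hölder-regularity argument of \cref{lem:l2_bound_l} through for $D^kb(\ell+\sigma\cI_+^{H-\frac12}X^\x)$ using only the weak Hölder estimate \eqref{eq:growth_sk_2}, and in both regimes the exponential growth of the derivatives of $b$ permitted by \eqref{eq:growth_sk_1} has to be balanced against the exponential-of-quadratic tails of the bridge $X^\x$ --- which is exactly why $T_0$ must be chosen small and $\rho$ large in \cref{lem:exponential_bound_l,lem:conditioning_holder}.
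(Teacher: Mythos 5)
Your proposal is correct and follows essentially the same route as the paper's proof: the same splitting $\Psi_T(\ell;y)=\Expec{\E\big(\mathfrak{L}^{\ell,\x}\bigcdot W\big)_T e^{Z^\x}}$ along the decomposition $X^\x=K^\x+W$, the same verification of the hypotheses of \cref{lem:parameter_doleans_dade} exploiting the affine dependence of $X^\x$ on $\x$ (with the growth conditions \eqref{eq:growth_sk_1}--\eqref{eq:growth_sk_2}, \cref{lem:holder_fractional_derivative,lem:conditioning_holder,lem:exponential_bound_l}), and the same concluding H\"older-inequality estimate absorbing the at-most-exponential growth of the polynomial factor into the Gaussian prefactor of \eqref{eq:density_girsanov}. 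Your treatment is, if anything, slightly more explicit than the paper's on the Leibniz expansion of the product and on the top-order case $|\balpha|=k$ when $H>\frac12$.
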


\begin{proof} 
  By the product representation of \cref{prop:density_girsanov}, it is enough to prove that $y\mapsto \Psi_T(\ell;y)$ is $\C^k$ on $\mathbb{R}^n$. To this end, we make use of the representation in \cref{prop:repres_psi}
  \begin{equation*}
    \Psi_T(\ell;y)=\E\left(\fL_t^{\ell,\x}\bigcdot W\right)_T\exp\left(\int_0^T\Braket{\fL_t^{\ell,\x},K_t^\x}\,dt\right),
  \end{equation*}
  where $\x=\sigma^{-1}\big(y-\ell(T)\big)$ and $K^\x$ was defined in \eqref{eq:definition_k}.
  Let us first verify that this expression falls in the regime of \cref{lem:parameter_doleans_dade} with
  \begin{equation*}
    f(\x,t)=\fL_t^{\ell, \x},\qquad Z(\x) = \int_0^T \Braket{\fL_t^{\ell,\x}, K_t^\x}\,dt
  \end{equation*}
  for sufficiently small $T>0$. In \cref{lem:exponential_bound_l} we have verified that, for each $\rho>0$ and each $K\subset\R^n$ compact, 
  \begin{equation*}
    \sup_{\x\in K}\Expec{\exp\left(\rho\int_0^T \big|f (\x,t)\big|^2\,dt\right)}<\infty,
  \end{equation*}
  provided we choose $T>0$ sufficiently small. By \eqref{eq:wiener_conditioned} we see that
  \begin{equation*}
    D_\x X_t^\x = \frac{2H}{\varrho_H T^{2H}}\int_0^t (T-s)^{H-\frac12}\,ds,\qquad t\in[0,T].
  \end{equation*}
  Consequently, we obtain
  \begin{equation*}
    D_\x\fL_t^{\ell,\x}=\frac{2H}{\varrho_H T^{2H}}\sigma^{-1}\cI_+^{\frac12-H}\Big[\mathscr{A}_H Db\Big(\ell+\sigma\cI_+^{H-\frac12} X^\x\Big)\Big](t)\sigma, \qquad t\in[0,T],
  \end{equation*}
  where 
  \begin{equation*}
    \mathscr{A}_H(t)\define\cI_+^{H-\frac12}\left(\int_0^\cdot (T-s)^{H-\frac12}\,ds\right)(t)=\frac{1}{\Gamma\left(H+\frac12\right)}\int_0^t (t-s)^{H-\frac12}(T-s)^{H-\frac12}\,ds. 
  \end{equation*}
  Higher order derivatives can be computed similarly. Recalling the growth properties imposed by \eqref{eq:growth_sk_1} and \eqref{eq:growth_sk_2} as well as \cref{lem:holder_fractional_derivative,lem:conditioning_holder}, it is by now routine to verify that $\big\|\partial_\x^{\balpha}f(\x,\cdot)\bigcdot W\big\|_{L^p}<\infty$ locally uniform in $\x$ for each $p\geq 1$ and each $|\balpha|\leq k$. 

  In \eqref{eq:upperbound_cauchy} we have verified that $\Expec{\exp\big(\rho Z(\x)\big)}<\infty$ for any $\rho>0$ locally uniform in $\x$, provided we choose $T>0$ sufficiently small. Since the derivative of $K^\x$ in the parameter is given by 
  \begin{equation*}
    D_\x K_t^\x=\frac{2H}{\varrho_H\left(H+\frac12\right) T^{2H}}\left(T^{H+\frac12}-(T-t)^{H+\frac12}\right)\id \qquad t\in[0,T],
  \end{equation*}
  and $\partial_\x^{\balpha} K_t^\x=0$ for any $|\balpha|\geq 2$, one immediately sees that $\big\|\partial_\x^{\balpha} Z(\x)\big\|_{L^p}<\infty$ for each $p\geq 1$, locally uniform in $\x$.

  Hence, \cref{lem:parameter_doleans_dade} applies and we can `na\"ively' differentiate the right-hand side of \eqref{eq:Psicontinue}. The upper bounds on the derivatives (for sufficiently small times $T>0$) can then be established after applying H\"older's inequality: 
  \begin{equation*}
     \big|\partial_y^{\balpha}\Psi_T(\ell;y)\big|\leq\big\|\Psi_T(\x,\ell)\big\|_{L^p}\left\|\mathfrak{p}\Big(\Big(\big(\partial_\x^{\balpha^\prime}f(\x,\cdot)\bigcdot W\big)_T,\partial_\x^{\balpha^\prime}Z(\x)\Big)_{\balpha^\prime\preccurlyeq\balpha}\Big)\right\|_{L^q}
  \end{equation*} 
  for any $p>1$ and $q=\frac{p}{p-1}$. The first factor can be estimated along the same lines as \cref{prop:lower_bound_conditioned_process} \ref{it:conditioned_density_2} and the second factor grows at most exponentially (but not square-exponentially!) in $|\x|$, $\vertiii{\ell}_{\C^\gamma}$, and $\|X^\x\|_{\C^{\beta}}$ ($\beta<\frac12$). This completes the proof.
\end{proof}

\subsection{The Parameter-Dependent Conditional Density}

In this section we generalize \cref{prop:smooth} to the parameter-dependent setup of \cref{thm:smooth}. First we notice that, as a consequence of \cref{lem:innovation_sde_well_posed}, the equation
\begin{equation}\label{eq:phi_lambda}
  \Phi_t^\lambda(\ell)=\ell(t)+\int_0^t b\big(\lambda,\Phi_s^\lambda(\ell)\big)\,ds+\sigma\tilde{B}_t
\end{equation}
has a unique (in law) weak solution for each $\ell\in\Cloc^{H-}(\R_+,\R^n)$ and each $\lambda\in\R^d$. 

\begin{proposition}\label{prop:equiv410parameter}
  Let $H\in(0,1)$, $k\geq 0$, $\sigma\in\Lin{n}$ be invertible, and $b:\R^d\times\R^n\to\R^n$ satisfy \ref{cond:psk_loc}. Then, for each $\ell\in\Cloc^{H-}(\R_+,\R^n)$, each $\lambda\in\R^d$, and each time $T>0$, the solution $\Phi_T^\lambda$ to \eqref{eq:phi_lambda} has a density $p_T^\lambda(\ell;\cdot)$ with respect to the Lebesgue measure. In addition, there are $\gamma<H$ and $\eta>0$ such that the following hold: 
  \begin{itemize}
    \item For each $K\subset\R^d$ and each $(\balpha,\bbeta)\in\N_0^d\times\N_0^n$ with $|\balpha|+|\bbeta|\leq k$, there are a time $T_0>0$ and $C_K,c_K>0$, such that, for each $\ell\in\Cloc^{H-}(\R_+,\R^n)$, 
  \begin{equation*}
    \sup_{\lambda\in K}\Big|\partial^{\alpha}_\lambda\partial^{\bbeta}_y p_T^\lambda(\ell;y)\Big|\leq e^{C_K(1+T^\eta\vertiii{\ell}_{\C^\gamma}^2)}\exp\Big(-c_K\big|y-\ell(T)\big|^2\Big) \qquad\forall\,y\in\R^n
  \end{equation*}
  for all $T\in(0,T_0]$.

  \item If $b$ satisfies even \ref{cond:psk}, then $c_K, C_K, T_0>0$ can be chosen uniformly over $\lambda\in\R^d$ in the above bound.
  \end{itemize}
\end{proposition}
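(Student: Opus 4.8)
The plan is to follow the blueprint of \cref{prop:smooth} verbatim, but to regard $(\lambda,\x)\in\R^d\times\R^n$ as a \emph{single joint parameter} and to differentiate once and for all with the parametric Dol\'eans-Dade result \cref{lem:parameter_doleans_dade} in dimension $m=d+n$. For each fixed $\lambda$ the map $b(\lambda,\cdot)$ satisfies \ref{cond:h} (the Lipschitz and growth content of \ref{cond:psk_loc} with $k=0$ forces this), so \cref{lem:innovation_sde_well_posed} and \cref{prop:density_girsanov} apply and give both the existence of the density and the product representation $p_T^\lambda(\ell;y)=\varphi_T^\ell(y)\,\Psi_T^\lambda(\ell;y)$, where $\varphi_T^\ell$ is the Gaussian prefactor. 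By \cref{prop:repres_psi},
\[
  \Psi_T^\lambda(\ell;y)=\Expec{\E\big(\fL^{\lambda,\ell,\x}\bigcdot W\big)_T\exp\Big(\int_0^T\big\langle\fL_t^{\lambda,\ell,\x},K_t^\x\big\rangle\,dt\Big)},
\]
with $\fL_t^{\lambda,\ell,\x}\define\varrho_H\sigma^{-1}\big(\cI_+^{\frac12-H}b(\lambda,\ell+\sigma\cI_+^{H-\frac12}X^\x)\big)(t)$ the obvious $\lambda$-dependent analogue of \eqref{eq:definition_l} and $K^\x$ as in \eqref{eq:definition_k}. Since $\varphi_T^\ell$ is smooth in $\x=y-\ell(T)$ and independent of $\lambda$, it suffices to prove that $(\lambda,\x)\mapsto\Psi_T^\lambda(\ell;y)$ is $\C^k$ with the asserted bounds and then to redistribute the $\partial_\x$-derivatives onto $\varphi_T^\ell$ by Leibniz exactly as in \cref{prop:smooth}.

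I would then apply \cref{lem:parameter_doleans_dade} with $\xi=(\lambda,\x)$, $f(\xi,t)=\fL_t^{\lambda,\ell,\x}$ and $Z(\xi)=\int_0^T\langle\fL_t^{\lambda,\ell,\x},K_t^\x\rangle\,dt$, and verify its three hypotheses. The $\x$-derivatives are computed precisely as in \cref{prop:smooth}: they bring down the \emph{deterministic} kernel $D_\x X_t^\x$ contracted against $D_y b$, so they are controlled by the spatial growth bound \eqref{eq:growth_bound} and the H\"older bound \eqref{eq:holder_bound} through \cref{lem:holder_fractional_derivative,lem:conditioning_holder}. The $\lambda$-derivatives are the genuinely new ingredient and are structurally simpler: $\partial_\lambda^{\balpha}\partial_\x^{\bbeta}\fL_t^{\lambda,\ell,\x}$ is again $\varrho_H\sigma^{-1}\cI_+^{\frac12-H}$ applied to a kernel-weighted mixed derivative $\partial_\lambda^{\balpha}D_y^{\bbeta}b$ evaluated along $\ell+\sigma\cI_+^{H-\frac12}X^\x$, so the very same H\"older-/Young-type estimates apply \emph{provided} these mixed derivatives obey growth bounds of the type \eqref{eq:growth_bound}--\eqref{eq:holder_bound}. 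The local-uniform exponential integrability of $\int_0^T|f|^2\,dt$ and of $Z$ is the $\lambda$-uniform analogue of \cref{lem:exponential_bound_l} and \eqref{eq:upperbound_cauchy}, and holds because \ref{cond:psk_loc} furnishes the Lipschitz, contraction and growth constants uniformly over $\lambda\in K$; the $L^p$-bounds on $(\partial_\xi^{\balpha'}f\bigcdot W)_T$ and $\partial_\xi^{\balpha'}Z$ then follow from the Burkholder--Davis--Gundy inequality together with the Gaussian tails of $\|X^\x\|_{\C^\beta}$ and of the auxiliary process $\mathfrak H^T$ from the proof of \cref{prop:lower_bound_conditioned_process}, via \cref{lem:conditioning_holder}.

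Granting the hypotheses, \cref{lem:parameter_doleans_dade} shows $(\lambda,\x)\mapsto\Psi_T^\lambda(\ell;y)$ is $\C^k$ and expresses each derivative as the expectation of $\E(\fL^{\lambda,\ell,\x}\bigcdot W)_T e^{Z}$ times a polynomial $\mathfrak p$ in the iterated integrals $(\partial_\xi^{\balpha'}f\bigcdot W)_T$ and $\partial_\xi^{\balpha'}Z$. The bound is then produced exactly as in \cref{prop:lower_bound_conditioned_process}\,\ref{it:conditioned_density_2} and \cref{prop:smooth}: a H\"older split separates the Dol\'eans-Dade factor (whose $L^p$-norm carries $e^{C_K(1+T^\eta\vertiii{\ell}_{\C^\gamma}^2)}$ and, together with $\varphi_T^\ell$, the Gaussian decay $e^{-c_K|y-\ell(T)|^2}$) from the polynomial factor, which grows at most exponentially --- crucially \emph{not} square-exponentially --- in $|\x|$, $\vertiii{\ell}_{\C^\gamma}$ and $\|X^\x\|_{\C^\beta}$, and is therefore absorbed into the Gaussian after possibly shrinking $T_0$. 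For the globally uniform statement I would simply rerun the argument under \ref{cond:psk}, where every constant above is supplied uniformly over $\lambda\in\R^d$, rendering $c_K,C_K,T_0$ independent of $K$.

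The step I expect to be the main obstacle is precisely this joint-parameter bookkeeping in \cref{lem:parameter_doleans_dade}: one must confirm that the mixed derivatives $\partial_\lambda^{\balpha}D_y^{\bbeta}b$ --- and not only the pure spatial derivatives that already appear in \cref{prop:smooth} --- obey growth and H\"older estimates compatible with the sub-Gaussian tails of $X^\x$, and that all of these bounds are uniform in $\lambda$ over the relevant set (locally for \ref{cond:psk_loc}, globally for \ref{cond:psk}); this uniformity is exactly what distinguishes the two regimes of the statement. Once this is in place, the remainder is a faithful, if notation-heavy, transcription of the one-parameter argument of \cref{prop:smooth}.
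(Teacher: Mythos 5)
Your proposal follows essentially the same route as the paper's own proof: the paper likewise invokes \cref{prop:density_girsanov,prop:repres_psi} for each fixed $\lambda$ to get the product representation $p_T^\lambda(\ell;y)=\varphi_T^\ell(y)\Psi_T^\lambda(\ell;y)$ with the $\lambda$-dependent drift $\fL^{\lambda,\ell,\x}$, then applies \cref{lem:parameter_doleans_dade} in the joint parameter exactly as in \cref{prop:smooth}, and extracts the Gaussian bound (locally or globally uniform in $\lambda$ according to \ref{cond:psk_loc} versus \ref{cond:psk}) by the same H\"older-splitting argument. Your write-up is in fact more explicit than the paper's proof, which compresses the verification of the hypotheses of \cref{lem:parameter_doleans_dade} --- including the bookkeeping for the mixed derivatives $\partial_\lambda^{\balpha}D_y^{\bbeta}b$ that you rightly flag --- into a single sentence referring back to \cref{prop:smooth}.
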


\begin{proof}
  \Cref{prop:density_girsanov,prop:repres_psi} apply for each $\lambda\in\R^d$ and furnish the representation
  \begin{equation*}
    p_T^\lambda(\ell;y)=\frac{1}{\big(\sqrt{2\pi}\varrho_H T^H\big)^n\det(\sigma)}\exp\left(-\frac{\big|\sigma^{-1}\big(y-\ell(T)\big)\big|^2}{\varrho_H^2 T^{2H}}\right)\Psi^\lambda_T(\ell;y),\qquad y\in\R^n,
  \end{equation*}
  where 
  \begin{equation*}
    \Psi_T^\lambda(\ell;y)\define\Expec{\exp\left(\int_0^T\big\langle\fL^{\lambda,\ell,\x}_t,dX_t^\x\big\rangle-\frac{1}{2}\int_0^T\big|\fL^{\lambda,\ell,\x}_t\big|^2\,dt\right)},\qquad y\in\R^n,
  \end{equation*}
  with the semimartingale of \cref{cor:condition_wiener} and
  \begin{equation*}
    \fL^{\lambda,\ell,\x}_t\define(\varrho_H\sigma)^{-1}\left(\cI_+^{\frac12-H} b\Big(\lambda,\ell+\varrho_H\sigma\cI_+^{H-\frac12}X^\x\Big)\right)(t),\qquad t\in[0,T].
  \end{equation*}
  Similar to \cref{prop:smooth}, one can check that \cref{lem:parameter_doleans_dade} applies and $\Psi_T^\lambda(\ell;y)$ can be na\"ively differentiated. The Gaussian upper bound on $\big|\partial^{\alpha}_\lambda\partial^{\bbeta}_y p_T^\lambda(\ell;y)\big|$ follows as in \cref{prop:smooth}.
\end{proof}

\section{Proofs of the Main Results}\label{sec:proofmain}

We are now in the position to prove the main results of the article. The arguments are all based on \cref{prop:integral_density,prop:lower_bound_conditioned_process,prop:smooth} and suitable bounds on the generalized initial condition $\mu$ with respect to which we integrate the conditional density. The details are delegated to the subsequent subsections.

\subsection{Proofs of \cref{thm:density,thm:smooth}}\label{sec:smoothness}

\begin{proof}[Proof of \cref{thm:density}]
\Cref{prop:tv_convergence_invariant_measure} ensures the existence of  a unique stationary path space law $\prob_\pi$ to the equation \eqref{eq:sde}. 
The existence of a density $p_\infty$ is a direct consequence of \cref{prop:tv_convergence_invariant_measure,prop:density_girsanov} applied with $\ell=z+\sigma\cP^H w$ (the fact that $\ell$ belongs to $\C^{H-}(\R^n,\R^n)$ for $\sW(dw)$-a.e. $w\in\cB_H$ follows from \cite[Lemma 6.5(i)]{Deya2019}).

Let us now focus on the Gaussian bounds on $p_\infty$, see \eqref{eq:gaussian_bounds}. Since $b$ is assumed to be Lipschitz continuous, it satisfies \ref{cond:h}. Hence, it follows that the lower and upper bounds of \cref{prop:lower_bound_conditioned_process} hold true for a given $T_0>0$. Thus, denoting slightly abusively $p_{T_0}(z,w;\cdot)$ the density of  \eqref{eq:innovation_sde} at time $T_0$ when $\ell=z+\cP^Hw$, there exist some positive constants $c_{T_0}$ and $C_{T_0}$ such that, for any $y\in\R^n$,
\begin{align}
p_{T_0}(z,w;y)&\ge   {C_{T_0} e^{-2c_{T_0}\vertiii{z+\sigma\cP^H w}^2_{\C^\gamma}}}\exp\left(-{c_{T_0}\big|y-z-\sigma\cP^H w(T_0)\big|^2}\right) \\
&\ge C_{T_0} \exp\left(-\tilde{c}_{T_0}\left(|y|^2+ |z|^2+\|\cP^Hw\|_{\infty}^2+\|\cP^H w\|_{\C^\gamma}^2\right)\right).\label{eq:lowerbb}
\end{align}
where $\tilde{c}_{T_0}$ denotes another positive constant depending only on $T_0$ and the norms are taken over the interval $[0,T_0]$. Notice that if $Z$ is a non-negative random variable, which is finite on a set of positive probability, then $\expec{e^{-Z}}>0$. Therefore, the lower bound follows from \cref{prop:integral_density} since $\|\cP^H w\|_{\infty}^2+\|\cP^H w\|_{\C^\gamma}^2$ is $\sW(dw)$-a.s. finite for any $\gamma\in(0,H)$. 

As a next step, we prove the Gaussian-type upper bound. Again by \cref{prop:lower_bound_conditioned_process} and elementary inequalities,  there exist $T_0>0$ and constants $c, C>0$ such that, for any $T\in(0,T_0]$ and any $\varepsilon\in(0,1)$, 
\begin{equation*}
p_T(z,w;y)\leq\frac{Ce^{-c\varepsilon\frac{|y|^2}{T^{2H}}} }{T^{nH}}\exp\left(CT^\eta \big(|z|^2+\|\cP^H w\|_{\infty}^2+\|\cP^H w\|_{\C^\gamma}^2\big)+\frac{c\varepsilon \big|z+\cP^H w(T)\big|^2}{(1-\varepsilon) T^{2H}}\right).
\end{equation*}
Notice that, for any $\gamma<H$, $\|\cP^H w\|_{\infty}=\sup_{t\in[0,T]} |\cP^H w(t)|\le \|\cP^H w\|_{\C^\gamma} T^\gamma$
and since $w\mapsto \cP^H w$ is a bounded linear application from $\cB_H$ to $\mathfrak{E}_\gamma^2\big([0,T],\R^n\big)$ (see \eqref{eq:defmathfrakekgam} and the lines below for background), it follows in particular that there is a finite constant $C$ such that $\|\cP^H w\|_{\C^\gamma}\le C \|w\|_{\cB_H}$ on $[0,T_0]$. From these remarks and from the previous inequality, we deduce that for any $\rho>0$, we can find $T_0$ and $\varepsilon$ small enough in such a way that the following bound holds true for some positive constants $c$ and $C$:
\begin{equation} \label{eq:lebesguedom}
p_{T_0}(z,w;y)\le   C\exp\left(\rho (|z|^2+\| w\|_{\cB_H}^2)\right) e^{-c{|y|^2}}. 
\end{equation}
By \cref{cor:invariant_measure}, we thus deduce that $p_\infty(y)\le C e^{-c|y|^2}$. By \cref{prop:smooth}, $y\mapsto p_{T_0}(z,w;y)$ is continuous under \hyperlink{sk}{$\mathbf{(S)}_{0,H}$}. This property transfers to $p_\infty$ by the Lebesgue continuity theorem and the inequality \eqref{eq:lebesguedom}.

Let us now turn to the derivatives.  By the upper bound given in \cref{prop:smooth}, we obtain with the same arguments as before (under \ref{cond:sk}) the existence of a positive $\rho$ such that for any $\balpha\in\N_0^n$ with $|\balpha|\leq k$,
\begin{align} \label{eq:lebesguedomderiv}
\left| \partial_y^{\balpha}  p_{T_0}(z,w;y)\right|\le   C\exp\left(\rho (|z|^2+\| w\|_{\cB_H}^2)\right) e^{-c{|y|^2}}. 
\end{align}
Thus, since \cref{prop:smooth} also ensures that $y\mapsto \partial_y^{\balpha}p_{T_0}(z,w;y)$ is ${\cal C}^k$, the Lebesgue differentiation theorem combined with \cref{cor:invariant_measure} and \eqref{eq:lebesguedomderiv} ensures in turn that $p_\infty$ is ${\cal C}^k$. The upper bound on the density is also a direct consequence of \cref{cor:invariant_measure} and \eqref{eq:lebesguedomderiv}.
\end{proof}

\begin{proof}[Proof of \cref{thm:smooth}] 
  The proof of this theorem has been prepared in \cref{prop:equiv410parameter} and follows from the same arguments as the ones of \cref{thm:density}.
\end{proof}

\subsection{Proof of \cref{thm:lower_bound_tindel}}\label{sec:proof_tindel}

The main additional difficulty in the proof of this theorem comes from the fact that the upper bound established in \cref{prop:smooth} is only available for a small enough $T_0>0$. Roughly speaking, this constraint on $T_0$ comes from the fact that if $Z\sim {\cal N}(0,1)$,  $\Expec{e^{\rho Z^2}}<\infty$ if and only if $\rho<1/2$ (this in turn implies that similar moments of the Liouville process are only finite for $\rho$ small enough). In the classical Markovian setting, such a problem can be overcame with the use of the so-called Chapman-Kolmogorov equation which allows to implement a ``bootstrapping'' argument and in turn to extend the property to any $T>0$. The challenge in our setting is to adapt such a genuinely Markovian approach with the help of our infinitely-dimensional Markovian structure, see \cref{sec:invariant_measures}. 

We begin with several technical lemmas and then turn to the proof of \cref{thm:lower_bound_tindel}.

\begin{lemma}\label{lem:sum_subgaussian}
  Let $X$ and $Y$ be $\R^n$-valued random variables. Suppose that $Y$ is centered Gaussian and there are $c_X,C_X>0$ and $m_X\in\R^n$ such that the density of $X$ satisfies
  \begin{equation*}
     p_X(x)\leq C_X e^{-c_X |x-m_X|^2}\qquad\forall\, x\in\R^n.
  \end{equation*} 
  Then there is a $C>0$ such that, for each $a\in\R^n$,
  \begin{equation*}
    \Expec{e^{-|X-Y-a|^2}}\leq C e^{-\frac{c_X}{C}|a-m_X|^2}.
  \end{equation*}
\end{lemma}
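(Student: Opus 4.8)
The plan is to reduce the bivariate expectation to a product of two \emph{marginal} quantities --- one governed by the Gaussianity of $Y$, the other by the density bound on $X$ --- by means of an elementary Young-type splitting followed by the Cauchy--Schwarz inequality. The crucial point is that no information on the joint law of $(X,Y)$ is needed: Cauchy--Schwarz only ever sees the two marginals, which is precisely the data at our disposal.

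First I would record the elementary inequality: for any $\lambda\in(0,1)$ and $u,v\in\R^n$, Young's inequality $2\langle u,v\rangle\leq\lambda|u|^2+\lambda^{-1}|v|^2$ yields
\[
  |u-v|^2\geq(1-\lambda)|u|^2-\bigl(\tfrac1\lambda-1\bigr)|v|^2.
\]
Applying this with $u=X-a$ and $v=Y$ and exponentiating gives the pointwise bound $e^{-|X-Y-a|^2}\leq e^{-(1-\lambda)|X-a|^2}\,e^{(\frac1\lambda-1)|Y|^2}$. Taking expectations and applying Cauchy--Schwarz then splits the two factors:
\[
  \Expec{e^{-|X-Y-a|^2}}\leq\Expec{e^{-2(1-\lambda)|X-a|^2}}^{\frac12}\,\Expec{e^{2(\frac1\lambda-1)|Y|^2}}^{\frac12}.
\]

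Next I would estimate the two factors separately. For the $Y$-factor, since $Y$ is a finite-dimensional centered Gaussian, $|Y|^2$ possesses a finite exponential moment $\Expec{e^{s|Y|^2}}<\infty$ for all sufficiently small $s>0$; I therefore fix $\lambda\in(0,1)$ so close to $1$ that $2(\tfrac1\lambda-1)$ lies below this threshold, rendering the second factor a finite constant $C_Y$ independent of $a$. For the $X$-factor I would insert the density bound and evaluate the resulting Gaussian integral by completing the square:
\[
  \Expec{e^{-2(1-\lambda)|X-a|^2}}\leq C_X\!\int_{\R^n}\! e^{-2(1-\lambda)|x-a|^2-c_X|x-m_X|^2}\,dx=C_X\Bigl(\tfrac{\pi}{2(1-\lambda)+c_X}\Bigr)^{\frac n2}\! e^{-\frac{2(1-\lambda)c_X}{2(1-\lambda)+c_X}|a-m_X|^2}.
\]
Combining the two factors produces a bound of the form $C''e^{-c'|a-m_X|^2}$ with $c'=\tfrac{(1-\lambda)c_X}{2(1-\lambda)+c_X}>0$ and $C''<\infty$ depending only on $C_X,c_X,n,\lambda$, and $C_Y$, all independent of $a$.

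Finally I would consolidate the two constants into a single one. Since $c'>0$ and $C''<\infty$ are fixed, choosing $C\define\max\{C'',\,c_X/c'\}$ guarantees both $C\geq C''$ and $c_X/C\leq c'$, whence $C''e^{-c'|a-m_X|^2}\leq C e^{-(c_X/C)|a-m_X|^2}$, which is exactly the asserted estimate. The only genuinely delicate step is the calibration of $\lambda$: one must allocate an arbitrarily small portion of the exponential budget to $|Y|^2$ in order to remain within the range of finite exponential moments of the Gaussian $Y$ (whose covariance is otherwise unconstrained); once $\lambda$ is pinned down, the remainder is a Gaussian integral and elementary bookkeeping.
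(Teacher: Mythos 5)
Your proof is correct. It draws on the same elementary toolbox as the paper---a Young-type splitting followed by Cauchy--Schwarz, with no independence assumption on $(X,Y)$---but the decomposition itself is genuinely different. The paper applies the splitting to the pair $(a,\,X-Y)$: for $\varepsilon\in(0,1)$ it bounds $\Expec{e^{-|X-Y-a|^2}}\leq e^{-\varepsilon|a|^2}\,\Expec{e^{\frac{4\varepsilon}{1-\varepsilon}|X|^2}}^{\frac12}\Expec{e^{\frac{4\varepsilon}{1-\varepsilon}|Y|^2}}^{\frac12}$ (taking $m_X=0$ without loss of generality), so the Gaussian decay in $a$ is peeled off pointwise \emph{before} any expectation is taken, and all that remains is to note that both exponential moments are finite for $\varepsilon$ small---for $X$ this again follows from the assumed density bound. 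You instead split the pair $(X-a,\,Y)$, so that the $Y$-factor is a pure constant and the decay in $|a-m_X|$ must be harvested from the $X$-factor, which you do by completing the square in the Gaussian convolution $\int_{\R^n} e^{-2(1-\lambda)|x-a|^2}p_X(x)\,dx$; your evaluation of that integral and the final consolidation $C\define\max\{C'',\,c_X/c'\}$ are both correct, and mirror the bookkeeping the paper leaves implicit in ``the lemma follows at once.'' The trade-off: the paper's split yields a one-line proof that never evaluates an integral and would work verbatim if $X$ were merely sub-Gaussian around $m_X$ without possessing a density; your route uses the density hypothesis quantitatively and in return produces explicit constants, e.g.\ $c'=\frac{(1-\lambda)c_X}{2(1-\lambda)+c_X}$, exhibiting exactly how the two Gaussian precisions combine.
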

\begin{proof}
  There is no loss of generality in assuming $m_X=0$. Notice that, for each $\varepsilon\in(0,1)$,
  \begin{equation*}
     \Expec{e^{-|X-Y-a|^2}}\leq e^{-\varepsilon |a|^2}\Expec{e^{\frac{4\varepsilon}{1-\varepsilon}|X|^2}}^{\frac12}\Expec{e^{\frac{4\varepsilon}{1-\varepsilon}|Y|^2}}^{\frac12}
  \end{equation*} 
  and the lemma follows at once.
\end{proof}

\begin{lemma}\label{lem:holder_norm_bar_b}
  Let $(\bar{B}^t_h)_{h\geq 0}$ be the history process defined in \eqref{eq:innovation_process} and let $T>0$. Then, for each $\gamma<H$, there is a $\rho>0$ such that
  \begin{equation*}
    \sup_{t\geq 0}\Expec{e^{\rho\vertiii{\bar{B^t}}_{\C^\gamma}^2}}<\infty,
  \end{equation*}
  where the H\"older norm is taken over the interval $[0,T]$.
\end{lemma}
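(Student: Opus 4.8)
The plan is to eliminate the supremum over $t$ by a stationarity argument, reducing to a single Gaussian process, and then to invoke Fernique's theorem (\cref{prop:fernique}). First I would show that the law of $(\bar B^t_h)_{h\ge0}$ does not depend on $t$. Substituting $u=t+v$ in \eqref{eq:innovation_process} and writing $\hat W_v\define W_{t+v}-W_t$ gives
\begin{equation*}
  \bar B_h^t=\alpha_H\int_{-\infty}^0\Big((h-v)^{H-\frac12}-(-v)^{H-\frac12}\Big)\,d\hat W_v .
\end{equation*}
Since $(\hat W_v)_{v\le0}$ has the same law as $(W_v)_{v\le0}$ by stationarity of the Brownian increments, the process $(\bar B^t_h)_{h\ge0}$ has the same distribution as $\bar B\define\bar B^0$ for every $t\ge0$. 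Consequently $\sup_{t\ge0}\Expec{e^{\rho\vertiii{\bar B^t}_{\C^\gamma}^2}}=\Expec{e^{\rho\vertiii{\bar B}_{\C^\gamma}^2}}$, and it suffices to bound this single expectation.

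Next I would establish that $\bar B$ has $\gamma'$-H\"older paths on $[0,T]$ for every $\gamma'<H$. For $0\le h'<h$ the $(-v)^{H-\frac12}$ terms cancel in $\bar B_h-\bar B_{h'}$, so after substituting $w=-v$ and using self-similar scaling,
\begin{equation*}
  \Expec{\big|\bar B_h-\bar B_{h'}\big|^2}=n\,\alpha_H^2\int_0^\infty\Big((h+w)^{H-\frac12}-(h'+w)^{H-\frac12}\Big)^2\,dw\lesssim|h-h'|^{2H},
\end{equation*}
with an implied constant independent of $h,h'$ (the integral converges for $H\in(0,1)$). As $\bar B$ is Gaussian, all moments of its increments are controlled by this variance, so Kolmogorov's continuity criterion yields $\gamma'$-H\"older sample paths on $[0,T]$ for every $\gamma'<H$. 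I then fix exponents $\gamma<\gamma'<\gamma''<H$.

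Finally, to apply \cref{prop:fernique} I must realize $\bar B$ as a centered Gaussian random variable in a \emph{separable} Banach space whose norm dominates $\vertiii{\cdot}_{\C^\gamma}$ on $[0,T]$. Since the H\"older spaces themselves are not separable, I would work in the little H\"older space $\h^{\gamma'}\big([0,T],\R^n\big)$, the closure of the smooth functions in the $\C^{\gamma'}$-norm: it is separable and contains every $\gamma''$-H\"older function. By the previous step $\bar B$ takes values in $\h^{\gamma'}$ almost surely, the evaluation functionals show it is Borel measurable and Gaussian in the Banach-space sense, and $\vertiii{\cdot}_{\C^\gamma}\le C\|\cdot\|_{\C^{\gamma'}}$ on $[0,T]$. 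Because $\|\bar B\|_{\h^{\gamma'}}<\infty$ a.s., the quantile $\zeta$ of \cref{prop:fernique} is finite, and that proposition yields $\Expec{e^{\rho\|\bar B\|_{\h^{\gamma'}}^2}}<\infty$ for all sufficiently small $\rho>0$, which gives the claim after possibly shrinking $\rho$. The stationarity observation makes the uniformity in $t$ essentially automatic, so the only genuinely delicate point is this separability issue, which is precisely what the passage to the little H\"older space is designed to resolve.
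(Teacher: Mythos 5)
Your proof is correct and takes essentially the same route as the paper's, whose entire argument is the observation $(\bar{B}^t_h)_{h\geq 0}\overset{d}{=}(\bar{B}_h)_{h\geq 0}$ for each $t\geq 0$ combined with \cref{prop:fernique}. You simply fill in the details the paper treats as immediate---the increment-stationarity computation, the H\"older regularity of $\bar{B}$, and the passage to a separable little H\"older space so that Fernique's theorem genuinely applies---and these details are all sound.
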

\begin{proof}
  This is an immediate consequence of the fact $(\bar{B}^t_h)_{h\geq 0}\overset{d}{=}(\bar{B}_h)_{h\geq 0}$ for each $t\geq 0$ and \cref{prop:fernique}.
\end{proof}

Henceforth, let $Y^{y_0}$ denote the solution to \eqref{eq:sde} started from the generalized initial condition $\delta_{y_0}\otimes\sW$.

\begin{lemma}\label{lem:gaussian_moments_solution}
  Assume \ref{cond:h} and let $K\subset\R^n$ be compact. Then, for each $T>0$ there is a $\rho>0$ such that
  \begin{equation*}
    \sup_{y_0\in K}\sup_{t\in[0,T]}\Expec{e^{\rho |Y_t^{y_0}|^2}}<\infty.
  \end{equation*}
\end{lemma}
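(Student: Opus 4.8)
The plan is to reduce the statement to the pathwise \emph{a~priori} bound \eqref{eq:gronwall_strong} already obtained in the proof of \cref{prop:strong_well_posedness_tindel}, and to conclude via Fernique's theorem. First I would observe that, under either regime of \ref{cond:h}, the drift has at most linear growth: this is explicit when $H<\frac12$, while for $H>\frac12$ one may always choose the H\"older exponent $\alpha\in\big(1-\frac{1}{2H},1\big]$ (possible since $1-\frac{1}{2H}<\frac12$), and then the global H\"older norm \eqref{eq:local_holder} gives $\big|b(y)\big|\leq\big|b(0)\big|+\|b\|_{\C^\alpha}|y|^\alpha\leq C(1+|y|)$. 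Consequently the Gr\"onwall estimate underlying \eqref{eq:gronwall_strong} applies to $Y^{y_0}$ and yields, pathwise,
\[
  \sup_{t\in[0,T]}\big|Y_t^{y_0}\big|\leq\big(CT+|y_0|+\|B\|_\infty\big)e^{CT},
\]
where $\|B\|_\infty$ is taken over $[0,T]$ and $C$ is a constant independent of $y_0$.

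Next I would square this bound and use $(a+b+c)^2\leq 3(a^2+b^2+c^2)$ to obtain, for every $t\in[0,T]$,
\[
  \big|Y_t^{y_0}\big|^2\leq 3e^{2CT}\big(C^2T^2+|y_0|^2+\|B\|_\infty^2\big).
\]
Since $K$ is compact, $R_K\define\sup_{y_0\in K}|y_0|<\infty$, so for $y_0\in K$ the first two summands are controlled by the deterministic quantity $C^2T^2+R_K^2$. The only random contribution is $\|B\|_\infty^2$, and the key input is that it has Gaussian tails: viewing the law of $(B_t)_{t\in[0,T]}$ (which, under the generalized initial condition $\delta_{y_0}\otimes\sW$, is a genuine $H$-fBm vanishing at $0$) as a centered Gaussian measure on the separable Banach space $\C_0\big([0,T],\R^n\big)$, \cref{prop:fernique} supplies a threshold $\rho_0>0$ with $\Expec{e^{\rho^\prime\|B\|_\infty^2}}<\infty$ for every $\rho^\prime\leq\rho_0$.

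Finally I would fix $\rho>0$ so small that $3\rho e^{2CT}\leq\rho_0$, whence
\[
  \sup_{y_0\in K}\sup_{t\in[0,T]}\Expec{e^{\rho|Y_t^{y_0}|^2}}\leq e^{3\rho e^{2CT}(C^2T^2+R_K^2)}\,\Expec{e^{3\rho e^{2CT}\|B\|_\infty^2}}<\infty,
\]
which is the claim. I do not expect a serious obstacle here: the argument is essentially the one already used to verify admissibility in \cref{lem:novikov_satisfied}. The only points requiring care are establishing linear growth uniformly in the two regimes of \ref{cond:h} (handled by the choice $\alpha\leq 1$ above), securing uniformity over $K$ through compactness via $R_K$, and coordinating the exponent $\rho$ simultaneously with the Gr\"onwall factor $e^{2CT}$ and the Fernique threshold $\rho_0$.
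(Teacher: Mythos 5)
Your proposal is correct and follows essentially the same route as the paper's own (very terse) proof: a pathwise Gr\"onwall bound $\sup_{t\in[0,T]}|Y_t^{y_0}|\leq e^{CT}\big(1+|y_0|+\|B\|_\infty\big)$, uniformity over $K$ by compactness, and Gaussian integrability of $\|B\|_\infty^2$ via \cref{prop:fernique}, with $\rho$ chosen small relative to the Gr\"onwall factor. Your extra care in justifying linear growth of $b$ under \ref{cond:h} (including the choice $\alpha\leq 1$ when $H>\frac12$) matches the reading the paper itself adopts, e.g.\ in \cref{lem:novikov_satisfied} and in the derivation of \eqref{eq:gronwall_strong}, so there is no discrepancy.
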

\begin{proof}
  A straight-forward Gr\"onwall argument shows that 
  \begin{equation*}
    \big|Y_t^{y_0}\big|\leq e^{CT}\big(1+ |y_0|+\|B\|_\infty\big)\qquad\forall\,t\in[0,T]
  \end{equation*}
  and the lemma follows from \cref{prop:fernique}.
\end{proof}

Finally, we can prove the last main result of this article: 

\begin{proof}[Proof of \cref{thm:lower_bound_tindel}]
By \cref{prop:strong_well_posedness_tindel}, strong existence and uniqueness hold under \ref{cond:h}. Let $\Phi(y_0,w)$ denote the unique (in law) weak solution to \eqref{eq:innovation_sde} with $\ell=y_0+\cP^H w$, see \cref{lem:innovation_sde_well_posed}. Then, by \cref{prop:density_girsanov}, we know that, for any $t>0$, the density of   $\Phi_t(y_0,w)$ exists for any $(y_0,w)\in\R^n\times \cB_H$. We denote it by $p_t(y_0,w;\cdot)$. Thus, by \cref{prop:integral_density} applied with the generalized initial condition $\mu=\delta_{y_0}\otimes \sW$, $Y^{y_0}_t$ also admits a density given by 
$$\bar{p}_t(y_0;y)=\int_{\cB_H} p_t(y_0,w;y)\sW(dw)\quad\forall y\in\mathbb{R}^n.$$
Let us now turn to the lower bound in \eqref{eq:lower_bound_tindel}. Employing \cref{prop:lower_bound_conditioned_process} \ref{it:conditioned_density_1} and similar arguments as in \eqref{eq:lowerbb}, we have
  \begin{equation*}
    \bar{p}_t(y_0;y)\geq \frac{C e^{-2c|y_0|^2}}{t^{nH}}\exp\left(-\frac{2c\big|y-y_0\big|^2}{t^{2H}}\right)\Expec{\exp\left(-2c|\sigma|\left(\vertiii{\bar{B}}_{\C^\gamma([0,t])}^2+\frac{|\bar{B}_t|^2}{t^{2H}}\right)\right)}
  \end{equation*}
  locally uniform in $t$. Since
  \begin{equation*}
    \Expec{\exp\left(-2c|\sigma|\left(\vertiii{\bar{B}}_{\C^\gamma([0,t])}^2+\frac{|\bar{B}_t|^2}{t^{2H}}\right)\right)}=\Expec{\exp\left(-2c|\sigma|\left(t^{2H}\vertiii{\bar{B}}_{\C^\gamma([0,1])}^2+|\bar{B}_1|^2\right)\right)}
  \end{equation*}
  by the fractional Brownian scaling and the right-hand side is manifestly uniformly bounded away from zero over compact time intervals, the first part of the proof is complete. 

  \medskip

  Next, we turn to the upper bound in \eqref{eq:lower_bound_tindel}.  As indicated in the preamble of this section, the argument proceeds by `bootstrapping' the small-time estimate provided in \cref{prop:lower_bound_conditioned_process} \ref{it:conditioned_density_2} with the help of the fractional Chapman-Kolmogorov equation, see \cref{lem:fraction_chapman_kolmogorov}. Let $K\subset\R^n$ be compact and choose $y_0\in K$. Let $\rho_1>0$ be the constant of \cref{lem:gaussian_moments_solution}. Let $C,T_0>0$ be the constants furnished by \cref{prop:lower_bound_conditioned_process} \ref{it:conditioned_density_2} and let $\rho_2>0$ be the constant provided by \cref{lem:holder_norm_bar_b} applied with the interval $[0,T_0]$. Define $\ft\define\left(\frac{\rho_1\wedge\rho_2}{6C}\right)^{\frac1\eta}\wedge T_0$ and set $I_k\define\bigcup_{i=1}^k \big((i-1)\ft, i \ft\big]$. We show that, for each $k\in\N$, there are $c_k,C_k>0$ such that 
  \begin{equation}\label{eq:bar_p_induction}
    \bar{p}_t(y_0;y)\leq C_k e^{-c_k|y-y_0|^2}\qquad\forall\,y\in\R^n,\,t\in I_k.
  \end{equation}
  To this end, we induct on $k\in\N$. For the induction base we notice that, if $t\leq\ft\leq T_0$, we obtain from \cref{prop:lower_bound_conditioned_process} \ref{it:conditioned_density_2} and the Cauchy-Schwarz inequality
  \begin{align*}
    \bar{p}_t(y_0;y)&\leq \frac{e^{C(1+2t^\eta|y_0|^2)}}{t^{nH}}\Expec{\exp\left(-\frac{c\big|y-y_0-\sigma\bar{B}_t\big|^2}{t^{2H}}+2Ct^\eta\vertiii{\bar{B}}_{\C^\gamma}^2\right)} \\
    &\leq \frac{e^{C(1+2t^\eta|y_0|^2)}}{t^{nH}}\Expec{\exp\left(-\frac{2c\big|y-y_0-\sigma\bar{B}_t\big|^2}{t^{2H}}\right)}^{\frac12}\Expec{\exp\left(4Ct^\eta\vertiii{\bar{B}}_{\C^\gamma}^2\right)}^{\frac12}.
  \end{align*}
  Since the second expectation is finite by virtue of our choice of $\ft\leq\left(\frac{\rho_2}{6C}\right)^{\frac1\eta}$, it is elementary to verify \eqref{eq:bar_p_induction} for $k=1$. 

  Let now $t\in\big(k \ft,(k+1)\ft\big]$ and set $\Delta=t-k\ft$. Then 
  \begin{align*}
      \bar{p}_t(y_0;y)&=\Expec{p_{\Delta}\big(Y_{kT_0}^{y_0}+\sigma\bar{B}^{kT_0};y\big)} \\
      &\leq \Expec{\exp\left(C\left(1+\Delta^\eta\vertiii{Y_{kT_0}^{y_0}+\bar{B}^{kT_0}}_{\C^\gamma}^2\right)\right)\exp\left(-c\big|y-Y_{kT_0}^{y_0}-\bar{B}^{kT_0}_\Delta\big|^2\right)} \\
      &\leq e^C\Expec{\exp\left(6C\Delta^\eta\big|Y_{kT_0}^{y_0}\big|^2\right)}^{\frac13}\Expec{\exp\left(6C\Delta^\eta\vertiii{\bar{B}^{kT_0}}_{\C^\gamma}^2\right)}^{\frac13} \\
      &\phantom{\leq}\times\Expec{\exp\left(-3c\big|y-Y_{kT_0}^{y_0}-\bar{B}^{kT_0}_\Delta\big|^2\right)}^{\frac13},
  \end{align*}
  where we used H\"older's inequality. Since $6C\Delta^\eta\leq \rho_1\wedge \rho_2$ by construction, the first two expectations are finite by \cref{lem:gaussian_moments_solution,lem:holder_norm_bar_b}, respectively. Finally, there is a (potentially decreased, but locally uniform in time!) $c>0$ such that
  \begin{equation*}
    \Expec{\exp\left(-3c\big|y-Y_{kT_0}^{y_0}-\bar{B}^{kT_0}_\Delta\big|^2\right)}^{\frac13}\lesssim \Expec{\exp\left(-c\big|y-y_0\big|^2\right)}.
  \end{equation*}
  This follows from the induction hypothesis and \cref{lem:sum_subgaussian}. The proof is complete.  
\end{proof}

\AtNextBibliography{\small}
\printbibliography


\end{document}